\newcommand{\R}{\mathbb{R}}
\newcommand{\Z}{\mathbb{Z}}
\renewcommand{\leq}{\leqslant}
\renewcommand{\le}{\leqslant}
\renewcommand{\geq}{\geqslant}
\renewcommand{\ge}{\geqslant}
\renewcommand{\epsilon}{\varepsilon}
\newcommand{\e}{\varepsilon}
\newcommand{\const}{\,{\rm const}\,}
\newcommand{\B}{\mathcal{B}}
\newcommand{\al}{\alpha}
\newcommand{\ep}{\epsilon}
\newcommand{\I}{{\mathcal{L}}}
\newcommand{\xs}{\overline{x}}
\begin{document}

\markboth{Serena Dipierro, Stefania Patrizi and Enrico Valdinoci}{Heteroclinic connections for nonlocal equations}

%%%%%%%%%%%%%%%%%%% Publisher's Area please ignore %%%%%%%%%%%%%%%%%%%%%%%
%
\catchline{}{}{}{}{}
%
%%%%%%%%%%%%%%%%%%%%%%%%%%%%%%%%%%%%%%%%%%%%%%%%%%%%%%%%%%%%%%%%%%%%%%%%%%

\title{Heteroclinic connections for nonlocal equations}

\author{Serena Dipierro}

\address{Department of Mathematics and Statistics,
University of Western Australia, \\ 35 Stirling Highway, 
Crawley, WA6009,
Australia\\
serena.dipierro@uwa.edu.au}

\author{Stefania Patrizi}

\address{Department of Mathematics,
University of Texas at Austin, \\
2515 Speedway Stop C1200,
Austin, Texas 78712-1202, USA\\
spatrizi@math.utexas.edu}

\author{Enrico Valdinoci}

\address{Department of Mathematics and Statistics,
University of Western Australia, \\ 35 Stirling Highway, 
Crawley, WA6009,
Australia\\
enrico.valdinoci@uwa.edu.au}

\maketitle

\begin{history}
\received{(Day Month Year)}
\revised{(Day Month Year)}
\accepted{(Day Month Year)}
\comby{(xxxxxxxxxx)}
\end{history}

\begin{abstract}
We construct heteroclinic orbits for a strongly nonlocal
integro-differential equation. Since the energy associated to
the equation
is infinite in such strongly nonlocal regime,
the proof, based on variational methods,
relies on a renormalized energy functional, exploits
a perturbation method
of viscosity type, combined with an auxiliary penalization
method, and develops a free boundary theory for a double obstacle
problem of mixed local and nonlocal type.

The description of the stationary positions for the atom dislocation function in a
perturbed crystal,
as given by the Peierls-Nabarro model, is a particular case of the result presented.
\end{abstract}

\keywords{Heteroclinic solutions;
Peierls-Nabarro model; crystal dislocation; 
fractional Laplacian; nonlocal equations}

\ccode{AMS Subject Classification: 70K44, 37C29, 34A08.}

\section{Introduction}

Heteroclinic orbits are a classical topic in the context of dynamical systems.
Not only they are trajectories that show an interesting behavior,
providing a connection between two different rest positions,
but they are often the ``building blocks'' for constructing complicated
orbits, drifting from one equilibrium to another, possibly
leading to a chaotic dynamics.

The recent literature has also taken into account the case in which
the ``classical'' differential equations are replaced by integro-differential equations. 
The study of these nonlocal equations
is not only motivated by mathematical curiosity and by
the will driving the scientists of facing with
new challenging problems, but it also possesses concrete motivations in applied
sciences: in particular, our main motivation
for the problem treated in this paper comes from the
description of the stationary positions for the atom dislocation in crystals,
as provided by the Peierls-Nabarro model, see e.g.~\cite{NABA}
and Section~2 of~\cite{MR3296170}. In this context, the evolution
of the dislocation function on the ``slip line'' (i.e., the intersection
between the ``slip plane'', 
along which the crystal experiences a plastic deformation, and a transversal reference plane)
is described by an equation of fractional type,
as a consequence of the balance between
the elastic bonds that link the atoms
and the internal force of the crystals which tends to place all the atoms
into a periodically organized lattice.

Concretely, in the Peierls-Nabarro model for edge dislocations,
one considers equations that can be written along the slip line as
\begin{equation}\label{P2P2} \sqrt{-\Delta}\, Q(x)+W'(Q(x))=0\qquad{\mbox{ for any }}x\in\R,\end{equation}
where~$W$ is a multi-well potential and the diffusion operator
is the square root of the Laplacian, which (up to normalizing multiplicative
constants) is the integro-differential operator
\begin{equation}\label{P2P2P2} \sqrt{-\Delta} \,Q(x):=
{\rm P.V.}\int_\R \frac{Q(x)-Q(y)}{|x-y|^2}\,dy
:=\lim_{\varrho\searrow0}\int_{\R\setminus B_\varrho(x)}\frac{Q(x)-Q(y)}{|x-y|^2}\,dy
.\end{equation}
In the setting of~\eqref{P2P2}, the function~$Q:\R\to\R$ represents
a dislocation function (i.e., roughly speaking, a measure of the atomic
disregistry with respect to the ideal rest configuration of a perfect crystal);
the diffusion operator in~\eqref{P2P2} and~\eqref{P2P2P2}
takes into account the effect on the slip line of the
elastic bonds between different atoms in the crystal
and the potential~$W$ is induced by the large-scale pattern of the crystal itself
(see e.g.~\cite{NABA}
and Section~2 of~\cite{MR3296170} for additional details).
\medskip

We also recall that, at a microscopic scale, there is a strong connection between
the Peierls-Nabarro model and the Frenkel-Kontorova model, see~\cite{MR2852206}.
Related models appear also in the study of 
the Benjamin-Ono equation, see~\cite{MR1442163}, in
boundary reaction equations, see~\cite{MR2177165},
and in spin systems on lattices, see~\cite{MR2219278}.
In addition, the study of nonlocal equations with a singular kernel
is a very intense subject of research in terms of harmonic analysis,
see e.g.~\cite{MR0290095},
and of regularity theory,
see e.g.~\cite{MR2707618}.\medskip

The mathematical framework in which we work here is the following.
Given a function~$Q:\R\to\R$, 
the nonlocal operator that we take into account in this paper is given by
\begin{equation}\label{LOPERATO} \begin{split}
{\mathcal{L}} Q(x)&:= {\rm P.V.}
\int_{\R} \big( Q(x)-Q(y)\big)\,K(x-y)\,dy\\&:=\lim_{\varrho\searrow0}
\int_{\R\setminus B_\varrho(x)} \big( Q(x)-Q(y)\big)\,K(x-y)\,dy
.\end{split}\end{equation}
The kernel~$K$ is supposed to be even and such that
\begin{equation}\label{KERNEL}
\frac{\theta_0}{|r|^{1+2s}} \chi_{[0,r_0]}(r)
\le
K(r)\le \frac{\Theta_0}{|r|^{1+2s}},
\end{equation}
for some~$\Theta_0\ge\theta_0>0$ and some~$r_0>0$, with
\begin{equation}\label{RANGE}
s\in\left( \frac14,\frac12\right].
\end{equation}
Of course, the case under consideration
comprises in particular
the original Peierls-Nabarro model
in~\eqref{P2P2P2}, which corresponds to the choice
\begin{equation}\label{RANGEP2P2}
s:=\frac12\quad{\mbox{ and }}\quad K(r):=\frac{1}{|r|^2}.
\end{equation}
In the equations that we take into account, the diffusive operator~${\mathcal{L}}$ is balanced
by a forcing term of potential type. More precisely,
we consider a non-negative multi-well potential~$W\in C^2(\R,\R)$
with a locally finite set of minima.
Namely, we suppose that~$W\ge0$ and that there
exists~${\mathcal{Z}}\subset\R$
which is a discrete set (i.e., it has no accumulation points)
such that
\begin{equation}\label{POT:GROW0} W(\zeta)=0 {\mbox{ for any }} \zeta\in{\mathcal{Z}}
{\mbox{ and }}W(r)>0 {\mbox{ for any }} r\in\R\setminus{\mathcal{Z}}.\end{equation}
We also suppose that~$W$ grows quadratically from its minima, that is
\begin{equation}\label{POT:GROW}\begin{split}
& c_0 |\xi|^2\le W(\zeta+\xi)\le C_0 |\xi|^2,
\\&{\mbox{for some~$C_0>c_0>0$,
for all~$\zeta\in{\mathcal{Z}}$ and~$\xi\in B_{\delta_0}$, with~$\delta_0>0$.}}\end{split}\end{equation}

We also assume that $W$ is monotone near the equilibria, that is
\begin{equation}\label{AGGchew}
{\mbox{$W'(x)>0$ for all~$x\in(\zeta,\zeta+\delta_0)$,
and $W'(x)<0$ for all~$x\in(\zeta-\delta_0,\zeta)$.}}
\end{equation}
\medskip

In our framework, the potential is modulated by an oscillatory function~$a$.
This function is supposed to maintain the sign of the potential, namely we assume that
\begin{equation} \label{POT:GROW2} a(x)\in [\underline a,\overline a]\quad{\mbox{ for all }}x\in\R,\end{equation}
for some~$\overline a> \underline a>0$.

We also assume that~$a$ is ``non-degenerate''. More precisely,
we suppose that there exist~$m_1$, $m_2\in\R$ and $\omega$, $\theta>0$ such that
\begin{equation}\label{m1m2theta}
m_2-m_1\geq 2\omega+\theta,
\end{equation} and, for $i\in\{1,2\}$,  
\begin{equation}\label{a nondegenerate}\begin{split}
a(x)-a(x-\theta)\ge\gamma\quad\text{and}\quad a(x)-a(x+\theta)\ge\gamma,\quad\text{for all }x\in[m_i-\omega,m_i+\omega],
\end{split}
\end{equation}
for\footnote{For concreteness,\label{7UJSjjasdfoor}
we mention that the function
$$ a(x):=2+\e\cos(\delta x)$$
with~$\e$, $\delta\in(0,1]$ satisfies~\eqref{a nondegenerate}
with~$m_1:=0$, $m_2:=\frac{2\pi}\delta$,
$\omega:=\frac{\pi}{4\delta}$, $\theta:=\frac\pi\delta$
and~$\gamma:={\sqrt{2}}\,\e$. Indeed,
in this case, 
\begin{eqnarray*}
&&\inf_{x\in[m_1-\omega,m_1+\omega]\cup
[m_2-\omega,m_2+\omega]} a(x)-a(x\pm\theta)\\
&=& \inf_{x\in[-\frac{\pi}{4\delta},\frac{\pi}{4\delta}]\cup
[\frac{2\pi}\delta-\frac{\pi}{4\delta},\frac{2\pi}\delta+\frac{\pi}{4\delta}]}
\e\big( \cos(\delta x)-\cos(\delta x\pm\delta\theta)\big)\\
&=& \inf_{y\in[-\frac\pi4,\frac\pi4]}
\e\big( \cos {y}-\cos(y\pm\pi)\big)\\&=&2
\inf_{y\in[-\frac\pi4,\frac\pi4]}
\e \cos {y}\\
&=& \sqrt{2}\,\e.
\end{eqnarray*}
This example shows that there exist ``small and slow perturbations 
of constant functions'' that satisfy~\eqref{a nondegenerate}.}
some~$\gamma>0$.\medskip

In this setting, the equation that we study here
has the form
\begin{equation}\label{EQUAZIONE}
{\mathcal{L}}Q^\star(x)+a(x)\,W'\big(Q^\star(x)\big)=0
\quad{\mbox{ for all }}x\in\R.
\end{equation}
Of course, when~${\mathcal{L}}$ is
replaced by the classical
second order differential operator, equation~\eqref{EQUAZIONE} may be seen
as a pendulum-like equation. 
\medskip

The main objective of this paper is to construct heteroclinic solutions
of~\eqref{EQUAZIONE}, i.e. orbits which connect two different
equilibria.
The equilibria that we connect are the ``closest''
possible, as specified by the following definition:

\begin{definition}\label{PRIMICI}
We say that~$\zeta_1\ne\zeta_2\in{\mathcal{Z}}$
are nearest neighbors if the open segment joining~$\zeta_1$
and~$\zeta_2$ does not contain any point of~${\mathcal{Z}}$.
\end{definition}

Then, in this setting, our main result on the existence
of heteroclinics for equation~\eqref{EQUAZIONE} is the following:

\begin{theorem}\label{MAIN}
Let~$\zeta_1\in{\mathcal{Z}}$ and let~$\zeta_2\in{\mathcal{Z}}\setminus
\{\zeta_1\}$ be a nearest neighbor of~$\zeta_1$.

Then, there exists a solution~$Q^\star:\R\to\big[\min\{\zeta_1,\zeta_2\},
\max\{\zeta_1,\zeta_2\}\big]$ of~\eqref{EQUAZIONE} such that
\begin{equation}\label{0LIMI}
\lim_{x\to-\infty}Q^\star(x)=\zeta_1\; \qquad{\mbox{ and }}\qquad
\;\lim_{x\to+\infty}Q^\star(x)=\zeta_2.
\end{equation}
\end{theorem}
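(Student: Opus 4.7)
Without loss of generality assume $\zeta_1<\zeta_2$. The plan is to proceed variationally, in the spirit of the heteroclinic constructions of Rabinowitz--Stredulinsky for periodic media, but with the essential difficulty that for $s\in(1/4,1/2]$ a transition from $\zeta_1$ to $\zeta_2$ has infinite $H^s$-seminorm, so that the natural energy is infinite and must be \emph{renormalized}. Concretely, fix a smooth reference profile $\bar Q\in C^2(\R)$ with $\bar Q\equiv\zeta_1$ on $(-\infty,-1]$, $\bar Q\equiv\zeta_2$ on $[1,+\infty)$, and $\bar Q$ nondecreasing in between, and, for competitors $Q:\R\to[\zeta_1,\zeta_2]$ such that $Q-\bar Q$ decays suitably at infinity, set
\[
\mathcal{F}(Q):=\frac12\iint_{\R^2}\bigl[(Q(x)-Q(y))^2-(\bar Q(x)-\bar Q(y))^2\bigr]K(x-y)\,dx\,dy+\int_{\R}a(x)\bigl[W(Q(x))-W(\bar Q(x))\bigr]\,dx.
\]
The first step is to verify, using \eqref{KERNEL}--\eqref{POT:GROW2}, that $\mathcal{F}$ is well-defined, bounded below, and coercive on the admissible class, by splitting the double integral into near- and far-field regions so that the divergent contributions of $Q$ cancel against those of $\bar Q$.

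\medskip

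To circumvent the analytic pathologies of the strongly nonlocal regime, the next step will be to regularize through a \emph{viscosity-type perturbation}, adding $\e$ times a more regular nonlocal seminorm (of order $\sigma\in(1/2,1)$), together with an auxiliary \emph{penalization} of the form $M\int_{\R}\bigl[(Q-\zeta_2)_+^2+(\zeta_1-Q)_+^2\bigr]\,dx$ that softly enforces the constraint $Q\in[\zeta_1,\zeta_2]$, while working initially on a large bounded interval $[-R,R]$ (with $Q=\bar Q$ outside). The direct method then yields a minimizer $Q_{\e,M,R}$ solving a pointwise Euler--Lagrange equation, to which standard regularity theory for nonlocal operators applies. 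Letting $R\to\infty$, then $\e\to0$, then $M\to+\infty$ in that order, one obtains a limit $Q^\star$ that minimizes a \emph{double obstacle problem} of mixed local--nonlocal type, with the equilibria $\zeta_1$ and $\zeta_2$ as obstacles.

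\medskip

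The hard part will be the \emph{free boundary analysis} at this limit: one must show that the coincidence sets $\{Q^\star=\zeta_1\}$ and $\{Q^\star=\zeta_2\}$ are, at most, neighbourhoods of $-\infty$ and $+\infty$ respectively, so that $Q^\star$ solves \eqref{EQUAZIONE} in the classical sense on all of $\R$. The monotonicity assumption \eqref{AGGchew} on $W'$ near the equilibria will permit the construction of small local barrier competitors (equilibria perturbed by a compactly supported bump) against which the minimizer must be tested via its variational characterization; such a comparison rules out any bounded contact region. The non-degeneracy condition \eqref{a nondegenerate} then pins down the asymptotic behaviour \eqref{0LIMI}: rigidly shifting $Q^\star$ by $\pm\theta$ would modify the $a$-weighted potential energy by a definite amount controlled by $\gamma$, and an argument by contradiction will exclude both the possibility that $Q^\star$ has a common limit at $\pm\infty$ and the possibility that the two limits are interchanged.

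\medskip

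The principal obstacle, in my view, is the development (in the strongly nonlocal regime $s\le 1/2$) of a workable free boundary theory for the mixed local--nonlocal double obstacle problem arising after penalization, sharp enough to characterize the non-contact set as the whole of $\R$ up to (at most) asymptotic neighbourhoods. Combined with the delicate splitting required to establish lower semicontinuity and coercivity of the renormalized energy $\mathcal{F}$, where divergent contributions from competitor and reference profile must be cancelled explicitly, these two points constitute the technical heart of the construction.
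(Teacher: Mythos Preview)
Your high-level philosophy---renormalize the divergent energy, add a viscous regularization, minimize, then pass to the limit---matches the paper. However, several of your concrete technical choices differ from the paper's in ways that matter, and one of them leaves a genuine gap.

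First, the differences. The paper's viscosity term is the \emph{local} Dirichlet energy $\frac{\eta}{2}\int_\R|\dot Q|^2$, not a higher-order nonlocal seminorm; this puts minimizers in $H^1(\R)$ and hence gives continuity immediately. The paper's penalization is $\frac{\mu}{2}\int_\R|Q-Q^\sharp_{\zeta_1,\zeta_2}|^2$, which serves to \emph{localize} the transition near the origin (yielding an $L^2$ bound, \eqref{LE:X5}, on which the whole ``clean interval/clean point'' machinery rests). Your penalization $M\int[(Q-\zeta_2)_+^2+(\zeta_1-Q)_+^2]$ is, by contrast, redundant: truncating a competitor into $[\zeta_1,\zeta_2]$ already lowers every term of the energy (see \eqref{GasbdflergriAFgshdlflll12}--\eqref{GasbdflergriAFgshdlflll13}), so the minimizer lies in $[\zeta_1,\zeta_2]$ for free and your $M$ plays no role. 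Finally, the paper's double obstacle problem does \emph{not} have $\zeta_1,\zeta_2$ as obstacles; the obstacles are auxiliary functions $\Psi,\Phi$ (built from solutions of \eqref{phi}--\eqref{psi}) that equal $\zeta_i\pm r$ at infinity and are imposed only outside a large window $(b_1,b_2)$.

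The gap lies in how you propose to obtain the asymptotics \eqref{0LIMI}. In the paper, the constraint class $\Gamma(b_1,b_2)$ \emph{forces} $Q$ to stay in $[\zeta_i-r,\zeta_i+r]$ near $\pm\infty$; the role of the non-degeneracy condition \eqref{a nondegenerate} (via the translation-by-$\theta$ argument in Proposition~\ref{FREE-eta}) is to show that the constrained minimizer does \emph{not} touch $\Psi,\Phi$, hence is an unconstrained minimizer and solves the equation everywhere. These constraints survive the limits $\eta\searrow0$, $\mu\searrow0$, and then Lemma~\ref{LE121} upgrades ``trapped near $\zeta_i$'' to ``converges to $\zeta_i$''. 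In your scheme there is no analogous mechanism: once you drop the bounded domain and the (redundant) $M$-term, nothing in your functional prevents the transition from drifting to infinity along the minimizing sequence, nor guarantees that the limit $Q^\star$ is confined near $\zeta_2$ for large $x$ (recall from Appendix~\ref{APPB} that $H^s$-control alone gives neither continuity nor decay when $s\le 1/2$). Your proposed ``free boundary'' step---ruling out interior contact with the constants $\zeta_1,\zeta_2$---is actually the easy part (strong maximum principle, since $W'(\zeta_i)=0$), and does not by itself deliver \eqref{0LIMI}. To close the argument you need either an $L^2$-type localization (as in the paper) or an explicit asymptotic constraint that you later prove to be non-binding; this is precisely the combination the paper implements.
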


{F}rom the physical point of view, Theorem~\ref{MAIN}
states that an arbitrarily small perturbation of the global crystal configuration (as induced
by the perturbation function~$a$) can lead to a nontrivial displacements
of the atoms of the crystal.
\medskip

We observe that Theorem~\ref{MAIN}
is new even in the model case of the square root of the Laplacian
(as described by~\eqref{P2P2P2}
and~\eqref{RANGEP2P2}).
In particular, Theorem~\ref{MAIN} is new even for the original perturbed
Peierls-Nabarro model. The proof that we perform is more general and it comprises
several other cases of interest, and it is rather complicated, but it does not
enjoy significant simplifications in the particular case of Peierls-Nabarro,
and this is the main reason for which we deal with equation~\eqref{EQUAZIONE}
here in its full generality.
\medskip

The advantage of keeping such a generality is that equation~\eqref{EQUAZIONE}
also applies to models which go beyond atom dislocation in crystals.
For instance, similar equations appear in the dynamics of biological populations.
In this case a logistic nonlinearity is coupled to a diffusive regime,
and several experiments have confirmed the tendency of
many biological species to exploit nonlocal and fractional diffusive strategies,
see~\cites{VIS, HUM}. 
The experiments on the different biological species have been also
fruitfully compared and confirmed by numerical simulations, see~\cite{MR3272943}.
{F}rom the mathematical point of view, anomalous diffusion
can provide concrete advantages for the structural organization of a biological
species in terms of resources consumption (see~\cites{MR2556498, MR3590678, MR3579567}),
it can be related to the optimal distribution of predators
in an environment with sparse prays (see~\cite{MR3590646}),
and can exhibit different stability patterns (see~\cites{MR2897881, MR3026598}).
Related nonlocal equations also naturally appear in genetics, see e.g.~\cite{MR1636644}
and the references therein.\medskip

Hence, in view of its fascinating mathematical framework, 
and due to the great source of concrete applications
in physics, material sciences, ethology and genetics,
we think it is worth to consider
equation~\eqref{EQUAZIONE} in its generality, since
the analytical difficulties do not get simplified by
reducing to the classical Peierls-Nabarro
model, and the additional generality allows us to comprise many cases
of concrete interest in applied sciences.
\medskip

We also point out that, differently from the classical case, the asymptotic
expression in~\eqref{0LIMI}
is not an immediate consequence of fractional
energy estimates since,
when~$s\in\left(0,\frac12\right]$, functions in~$H^s(\R)$ are not
necessarily infinitesimal at infinity
(see e.g. Appendix~\ref{APPB} for a simple example of this important
phenomenon).\medskip

We remark
that, on the one hand, the non-degeneracy condition in~\eqref{a nondegenerate}
introduces an additional difficulty to the problem, making it {\em
not invariant under translations}. 
On the other hand, the existence
of heteroclinic solutions in the homogeneous setting
is known at least in some cases, see~\cites{MR2177165, MR3081641, MR3280032, MR3460227, MURA}.
{F}rom the technical point of
view, the function~$a$ produces
an important difference with respect to the previous
literature on the subject, since the translation invariance of the system
implies the monotonicity of the heteroclinic, which
in turn implies a series of analytic estimates on the energy functional
and allows one to use of more direct minimization principles (see~\cites{MR3081641, MR3280032, MURA}
for further details).
\medskip

In general, our perspective is that not only
the non-degeneracy condition in~\eqref{a nondegenerate} provides
a useful technical tool for the construction of
heteroclinic solutions in problems that are
not invariant under translations, but also it is an essential
ingredient to deal with homoclinic and multibump orbits.
As a matter of fact, as a future project we aim
at using the heteroclinic connections constructed here
as a ``building block'' for these more general types of trajectories
(and, for instance, homoclinic solutions do not
exist when~$a$ is constant, see~\cite{MR3280032}
in case of even solutions, and~\cite{2019arXiv190701491C}
for the general case, therefore condition~\eqref{a nondegenerate}
cannot be removed when dealing with homoclinic connections).
\medskip

{F}rom the physical point of view,
thinking about dislocation models, the function~$a$
can be considered as an arbitrarily small perturbation (and with
oscillations having arbitrarily small frequencies, see the footnote on page~\pageref{7UJSjjasdfoor}) of the structural potential~$W$
which is induced by the ``large scale'' pattern of the crystal.
In a sense, one can think that an appropriate external force
can slightly, and suitably, modify the perfect periodic structure of the material,
thus producing this perturbed potential configuration.
\medskip

As a matter of fact, on the one hand Theorem~\ref{MAIN}
is not necessarily a perturbation result, since
it also comprises cases in which the function~$a$
is not a perturbation of the constant; on the other hand,
Theorem~\ref{MAIN} includes the perturbative setting
as a special case, and, in this sense, we can
also consider Theorem~\ref{MAIN} as a ``structural stability''
result which ensures the conservation of heteroclinic orbits
under a (suitably nondegenerate) perturbation.
\medskip

Furthermore, the construction of heteroclinic orbits for
ordinary differential equations is a well-studied topic in the literature
and, in this sense, Theorem~\ref{MAIN} here is a nonlocal counterpart
of some of the celebrated results obtained
in~\cites{MR1030854, MR1304144, MR1799055, MR1804957}
for ordinary differential equations
and Hamiltonian systems. Of course, the case of nonlocal equations
is conceptually quite different from that of ordinary differential equations,
since usual ``glueing'' and ``cut-and-paste'' methods are not available,
due to far-away energy interactions. We refer to~\cite{MR3469920}
for a general introduction to nonlocal problems, also motivated
from water wave models, phase transitions, material sciences and biology.\medskip

A result similar to Theorem~\ref{MAIN} when the nonlocal parameter~$s$
lies in the range~$\left(\frac12,1\right)$ has been obtained in~\cite{MR3594365}, see also~\cite{MR3625078}.
In case of  homogeneous media (i.e., when~$a$ is constant),
heteroclinic connections corresponding to parameter
ranges~$s\in\left(0,\frac12\right]$ have been 
studied in~\cites{MR3081641, MR3280032, MURA} by energy renormalization methods.\medskip

Concerning the nonlocal parameter range considered in this paper,
we recall that the case~$s\in\left(0,\frac12\right)$ can present several
technical and conceptual differences with respect to the case~$s\in\left(\frac12,1\right)$
(the case~$s=\frac12$ being typically ``in between'' the two cases).
For instance, as shown in~\cites{MR2564467, MR2948285}, several
fractional equations corresponding to the parameter range~$s\in\left[\frac12,1\right)$
present a ``local behavior'' at a large scale,
while they preserve a ``nonlocal behavior'' at any scale when~$s\in
\left(0,\frac12\right)$.
\medskip

In our setting, to deal with the case~$s\in\left(\frac14,\frac12\right]$ we
will adopt a strategy that has been also very recently used in~\cite{MURA}
and based on two basic steps:
\begin{itemize}
\item Given~$\zeta_1$, $\zeta_2\in{\mathcal{Z}}$,
we take a function
\begin{equation}\label{TRAQQ}
Q_{\zeta_1,\zeta_2}^\sharp\in C^\infty\big(\R, (\min\{\zeta_1,\zeta_2\},\max\{\zeta_1,\zeta_2\})\big)\end{equation} to be such that~$Q_{\zeta_1,\zeta_2}^\sharp(x)=\zeta_1$
for any~$x\in\left(-\infty,-1\right)$
and~$Q_{\zeta_1,\zeta_2}^\sharp(x)=\zeta_2$
for any~$x\in\left(1,+\infty\right)$.
Then, we consider a renormalized energy functional of the form
\begin{eqnarray*}&&
I_0(Q):=
\int_\R a(x)W\big(Q(x)\big)\,dx\\ &&\;\;
+\frac14
\iint_{\R\times \R}\Big( \big| Q(x)-Q(y)\big|^2
-\big| Q_{\zeta_1,\zeta_2}^\sharp(x)-Q_{\zeta_1,\zeta_2}^\sharp(y)\big|^2\Big)
\,K(x-y)\,dx\,dy.\end{eqnarray*}
The device of dealing with a renormalized energy is needed
in order to avoid the divergence of the energy due to nonlocal effects in this parameter range.
We stress that this energy divergence is unavoidable, since, for instance,
one can easily check that the fractional Sobolev (or 
Aronszajn-Gagliardo-Slobodeckij)
seminorm in~$H^s(-R,R)$ of a smooth function
connecting two constants goes like~$\log R$ when~$s=\frac12$,
and like~$R^{1-2s}$ when~$s\in\left(0,\frac12\right)$, thus diverging as~$R\to+\infty$.
\item We will perturb the original energy functional by a classical Dirichlet energy.
This step is very convenient, since it allows to deal with continuous trajectory in a
perturbed setting (notice that, when~$s\in\left(0,\frac12\right]$, functions in~$H^s(\R)$
are not necessarily continuous, see e.g.
Appendix~\ref{APPB} for a simple example). After dealing with a minimization argument
for such perturbed energy functional, we will obtain uniform estimates that
will allow us to pass to the limit.
\end{itemize}

Furthermore, in our setting,
we exploit an additional penalization
method in~$L^2(\R)$, which has the technical advantage
of localizing the transition sufficiently close to a prescribed
position (say, the origin).\medskip

A series of analytical techniques coming from elliptic partial differential equations
are also crucially exploited in our proofs:
\begin{itemize}
\item We will make use of viscosity solution methods in order to obtain
regularity theories that are uniform in the perturbation parameter related to the
Dirichlet energy (this is a fundamental step in order to ``remove''
the ``local and elliptic energy perturbation'' in the limit).
\item We will study a double obstacle problem of mixed local and nonlocal type,
which arises from the constrained minimization of the energy functional
(this step is crucial in order to estimate ``how the orbits separates from the constraints''). 
\item The estimates obtained on the penalized equation
will then be sufficiently stable to remove the additional
penalization term.
\end{itemize}
In general, we believe that a very interesting feature provided by the equations
related to the Peierls-Nabarro model lies in the fact that their complete understanding
requires a {\em synergic combination of resources and methods coming from different
specific backgrounds}, which include, among the others,
mathematical physics, calculus of variations,
partial differential equations, free boundary problems, geometric measure theory, harmonic analysis
and the theory of pseudodifferential operators.\medskip

The parameter range considered in this paper
has also a special energy feature. Namely,
while the interaction energy of fractional Sobolev type
of a heteroclinic connection is divergent,
the part coming from the potential is typically finite
under assumption~\eqref{RANGE}. To check this,
we recall formula~(12) in~\cite{MR3081641}, according to
which a heteroclinic orbit~$Q(x)$ converges to the equilibrium
in the homogeneous case like~$\frac\const{1+|x|^{2s}}$.
Since, by~\eqref{POT:GROW},
the potential~$W$ is quadratic near the equilibria, the potential
energy term of such trajectory behaves like
$$ \int_\R \frac\const{(1+|x|^{2s})^2}\,dx,$$
which is finite when~$s$ lies above the threshold~$1/4$.\medskip

For this reason, when~$s$ lies below~$1/4$, it could be expected
that a second energy renormalization is needed in order to
apply variational methods (e.g. in the approach given by
formula~(13) in~\cite{MR3081641}) and we plan to explore this
parameter range in future works.\medskip

The rest of the paper is organized as follows.
In Section~\ref{S:1}, we fix some notation, to be used in the rest of the paper.
In Section~\ref{S:2}, we give two elementary proofs
establishing a uniform bound for a nonlocal equation and
a regularity result for a perturbed problem
(in our setting,
such bound is important to obtain uniform
estimates in a perturbed problem, and the regularity result is
useful to estimate errors in the ``cut-and-paste'' procedures).

The proof of Theorem~\ref{MAIN} is then developed in
Sections~\ref{S:3}--\ref{UPAlao}. More precisely,
Section~\ref{S:3} is devoted to an energy estimate from below. In our
setting, this bound is important 
%%% to avoid that large excursions of the orbits
%%% may drift the renormalized energy to~$-\infty$ and 
to guarantee
the necessary compactness for the direct methods of the calculus of variations.

Then, we exploit these variational methods to construct
the heteroclinic connections, by proceeding step by step.
First, in Section~\ref{S:4}
we consider a constrained and perturbed problem.
The additional perturbation provides the technical advantage that
all the orbits with finite energy are in fact continuous,
and this fact will allow us to make use of geometric arguments
in the analysis of such orbits. The constrain is also useful to ``force''
the orbits close to the equilibria at infinity.
As a matter of fact, in Section~\ref{sec:LW},
using a double obstacle problem approach,
we show that constrained minimizers are continuous with
uniform bounds.

Interestingly, this obstacle problem is also of mixed local and nonlocal type,
and this is a class of problems rarely studied in the existing literature.
For our goals, the achievement
of uniform estimates for this problem is crucial in order to have precise information
when the orbit touches the variational constraints.

Also, in Sections~\ref{sec:CI} and~\ref{MI100} we recall the notions
of {\em{clean intervals}} and {\em{clean points}}, and we prove
some stickiness properties of the energy minimizers.

In Section~\ref{REDUSE}, we reduce the proof
of our main result to the case in which the set of equilibria
consists of two points.
Then, in Section~\ref{UNCO},
by taking the asymptotic constraints ``far enough'', we will
produce a free, i.e. unconstrained, minimizer. 

In Section~\ref{S023fhYU0193933}, by using estimates
that are uniform with respect to the viscous perturbative parameter,
we will be able to remove the viscosity perturbation.

Interestingly, the whole variational scheme presented so far
is improved by an additional penalization term. This technical device
is convenient to ``localize'' the transition of the minimal heteroclinic:
to obtain the desired solution, one needs to remove 
the penalization term. To this end, one
needs to obtain spatial asymptotics at infinity, 
in order to detect the limit behavior of the desired heteroclinics.
For this, it is also useful
to observe that solutions
that are asymptotically confined near an equilibrium are
in fact converging to it: this observation is contained in Section~\ref{ASynsdagg}.

With this, the penalized term in the equation 
will be removed in
Section~\ref{UPAlao}, thus completing the proof of Theorem~\ref{MAIN}.

\section{Notation}\label{S:1}

\begin{itemize}
\item 
Given~$I$, $J\subseteq\R$ and~$f$, $g:\R\to\R$, we set
\begin{equation}\label{0q0303030303s} \B_{I,J}(f,g):=
\iint_{I\times J} \big( f(x)-f(y)\big)\big(g(x)-g(y)\big)\,K(x-y)\,dx\,dy,\end{equation}
and 
\begin{equation}\label{Eenergy}
E_{I\times J}(f):=\iint_{I\times J}\Big( \big| f(x)-f(y)\big|^2
-\big| Q_{\zeta_1,\zeta_2}^\sharp(x)-Q_{\zeta_1,\zeta_2}^\sharp(y)\big|^2\Big)
\,K(x-y)\,dx\,dy.
\end{equation}
Notice that
\begin{equation}\label{BIJJI}\begin{split}&
\B_{J,I}(f,g)=
\iint_{J\times I} \big( f(x)-f(y)\big)\big(g(x)-g(y)\big)\,K(x-y)\,dx\,dy\\&\qquad=
\iint_{I\times J} \big( f(y)-f(x)\big)\big(g(y)-g(x)\big)\,K(y-x)\,dy\,dx=\B_{I,J}(f,g),\end{split}\end{equation}
since~$K$ is even.
Similarly,
$$E_{I\times J}(f)=E_{J\times I}(f).$$
%% \item It is also convenient to define
%% \begin{equation}\label{ELL}
%% (0,+\infty)\ni r\mapsto \ell(r):=\left\{\begin{matrix}
%% \log r & {{\mbox{ if }}} s=\displaystyle\frac12,\\
%% \\
%% r^{1-2s}& {{\mbox{ if }}} s\in\left(0,\displaystyle\frac12\right).
%% \end{matrix}\right.
%% \end{equation}
We will also use the notation
$$ E_{I^2}(f)= E_{I\times I}(f).$$
\item Given~$X$, $Y\subseteq\R$, we set
\begin{equation}\label{NORM}
[v]_{K,X\times Y}:=
\sqrt{\iint_{X\times Y} \big| v(x)-v(y)\big|^2\,K(x-y)\,dx\,dy}\,.\end{equation}
\item
The Lebesgue measure of a set~$A$ will be denoted by~$|A|$.
\end{itemize}

\section{A uniform bound and a regularity result for a nonlocal equation}\label{S:2}

In this section, ${\mathcal{L}} u$ denotes
the differential operator defined on smooth bounded functions
as follows
\begin{equation}\label{LOPERATOND}\begin{split}
{\mathcal{L}} u(x):=\;& {\rm P.V.}
\int_{\R^N} \big( u(x)-u(y)\big)\,K(x-y)\,dy\\:=\;&\lim_{\varrho\searrow0}
\int_{\R^N\setminus B_\varrho(x)} \big( u(x)-u(y)\big)\,K(x-y)\,dy,
\end{split}\end{equation} where $K$ is an even kernel such that 
\begin{equation*}%\label{KERNEL}
\frac{\theta_0}{|r|^{N+2s}} \chi_{[0,r_0]}(r)
\le
K(r)\le \frac{\Theta_0}{|r|^{N+2s}},
\end{equation*}
for some~$\Theta_0\ge\theta_0>0$ and some~$r_0>0$, with $s\in (0,1)$.
Of course, the setting in~\eqref{LOPERATO} is comprised here with~$N:=1$.

The next is a uniform
regularity result 
dealing with a perturbed problem:

\begin{lemma}\label{etaregularitylemmaND}
Let~$\eta\in[0,1],\,s\in(0,1)$ and~$f_1,\,f_2\in \R$. Let $u\in L^\infty(\R^N)\cap C({ {B_1}})$ be a  viscosity subsolution of 
\begin{equation}\label{eq1283ieNDsub}
-\eta\Delta u+{\mathcal{L}}u+f_1=0 \quad{\mbox{ in }}\;{ {B_1}},\end{equation}
and a  viscosity supersolution of 
\begin{equation}\label{eq1283ieNDsuper}
-\eta\Delta u+{\mathcal{L}}u+f_2=0 \quad{\mbox{ in }}\; { {B_1}}.\end{equation}
%Assume in addition that there exists $\beta\in(0,1)$ such that 
%\begin{equation}\label{boundaryhold}|u(x)-u(y)|\leq C|x-y|^\beta\quad \text{ for any }x\in\partial { {B_1}} \text{ and }y\in\R^N.
%\end{equation}
Then, $u\in C^{0,\alpha}( { B_{1/2} })$ for any $\alpha<\min\{2s,1\}$ and 
\begin{equation} \label{mainholdest}[u]_{C^{0,\alpha}({ B_{1/2} })}\leq
C\left(f_2-f_1+\|u\|_{L^\infty(\R^N)}\right)^\frac{\alpha}{2s}\|u\|_{L^\infty(\R^N)}^{1-\frac{\alpha}{2s}},
\end{equation}
for some $C>0$ independent of $\eta$.
\end{lemma}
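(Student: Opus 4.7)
The plan is to run the Ishii--Lions doubling-of-variables method, adapted to the mixed local--nonlocal viscosity setting (as in the framework of Caffarelli--Silvestre and Barles--Chasseigne--Imbert), while tracking every constant so that nothing depends on $\eta\in[0,1]$. A preliminary scaling reduction explains the interpolation form of \eqref{mainholdest}. Setting $A:=\|u\|_{L^\infty(\R^N)}$ and $M:=f_2-f_1+A$, and considering $u_\rho(x):=u(\rho x)/A$ with $\rho^{2s}:=A/M\le 1$, the function $u_\rho$ solves an inequality of the same form in $B_1$ with rescaled right-hand sides bounded by $1$, rescaled local parameter $\tilde\eta=\eta\,\rho^{2-2s}\in[0,1]$ (using $s\le 1$), and kernel in the same class (after adjusting $r_0$, which only enters constants). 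A uniform bound $[u_\rho]_{C^{0,\alpha}(B_{1/2})}\le C$ then unwinds precisely to $[u]_{C^{0,\alpha}(B_{1/2})}\le C\,A^{1-\alpha/2s}M^{\alpha/2s}$, which is \eqref{mainholdest}. Hence one may assume $\|u\|_\infty\le 1$ and $|f_i|\le 1$.

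Fix $x_0\in B_{1/2}$, let $\phi\in C^2([0,\infty))$ be a bounded, strictly concave modulus with $\phi(0)=0$ and $\phi(t)=t^\alpha$ for $0\le t\le\delta$ (for instance $\phi(t)=t^\alpha-\kappa\, t^{\alpha+\beta}$ near $0$ with $\beta\in(0,\min\{2s-\alpha,1-\alpha\})$ and small $\kappa>0$), and for parameters $L\gg 1$ to be chosen and a universal $L_0\gg 1$ define
\[
\Phi(x,y):=u(x)-u(y)-L\,\phi(|x-y|)-L_0\bigl(|x-x_0|^2+|y-x_0|^2\bigr),\quad (x,y)\in B_1\times B_1.
\]
The choice of $L_0$, dictated by $\|u\|_\infty\le 1$, forces any strictly positive maximum of $\Phi$ to be attained at an interior pair $(\bar x,\bar y)$; since $\Phi(z,z)\le 0$, necessarily $\bar x\neq\bar y$. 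Then $x\mapsto L\phi(|x-\bar y|)+L_0|x-x_0|^2$ and $y\mapsto -L\phi(|\bar x-y|)-L_0|y-x_0|^2$ are smooth test functions touching $u$ (from above at $\bar x$, from below at $\bar y$). Testing the viscosity inequalities \eqref{eq1283ieNDsub}--\eqref{eq1283ieNDsuper} with these (using the standard integro-differential definition that leaves $u$ outside a small neighborhood of the touching point) and subtracting yields an inequality of the form
\[
\eta\,\mathcal{T}+L\,\mathcal{N}_{\mathrm{near}}\;\le\;(f_2-f_1)+CL_0+L\,\mathcal{N}_{\mathrm{far}},
\]
where $\mathcal{T}\ge 0$ is the local Laplacian contribution (nonnegative thanks to the concavity of $\phi$), $\mathcal{N}_{\mathrm{near}}$ collects the nonlocal contribution from a ball of radius $\asymp|\bar x-\bar y|$, and $\mathcal{N}_{\mathrm{far}}$ is universally bounded via the kernel upper bound and $\|u\|_\infty\le 1$. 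The decisive estimate is the lower bound
\[
\mathcal{N}_{\mathrm{near}}\;\ge\;c_\star\,|\bar x-\bar y|^{\alpha-2s},
\]
with $c_\star>0$ depending only on $N,\alpha,s,\theta_0,\Theta_0,r_0$; it follows from $K(r)\ge\theta_0|r|^{-N-2s}\chi_{[0,r_0]}(|r|)$ and the strict concavity of $\phi$ near $0$, which produce a quadratic-like negative integrand of the correct homogeneity along a conical neighborhood of the direction $\bar x-\bar y$. Since $\alpha<2s$, the exponent $\alpha-2s$ is negative, and choosing $L$ large enough (independently of $\eta$, since $\eta\,\mathcal{T}\ge 0$ can simply be discarded) forces a contradiction. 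Therefore $\Phi\le 0$ on $B_1\times B_1$, which gives $u(x_0)-u(y)\le L\phi(|x_0-y|)\le L|x_0-y|^\alpha$ for $y\in B_{1/2}$; arbitrariness of $x_0$ yields \eqref{mainholdest}.

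The main obstacle is the uniform lower bound on $\mathcal{N}_{\mathrm{near}}$. It hinges on the careful choice of the modulus $\phi$ (so that $\phi''$ stays uniformly negative near the origin while $\phi(t)\sim t^\alpha$ still dictates the leading scaling), and on a precise splitting of the two nonlocal integrals for $\mathcal{L}\phi(|\cdot-\bar y|)(\bar x)$ and $\mathcal{L}\phi(|\bar x-\cdot|)(\bar y)$ into a critical near-field that produces the power $|\bar x-\bar y|^{\alpha-2s}$ and a bounded far-field. The role of $\eta$ is entirely benign throughout: the local Laplacian applied to the concave modulus has a favorable sign and never competes with the nonlocal contribution, which is the structural reason why the constant in \eqref{mainholdest} is independent of $\eta\in[0,1]$.
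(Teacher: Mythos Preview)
Your overall strategy---Ishii--Lions doubling of variables with a power modulus, splitting the nonlocal term into near and far fields, and extracting the gain $|\bar x-\bar y|^{\alpha-2s}$---is the same as the paper's, and the preliminary scaling reduction is a clean way to get the interpolation exponent. However, your treatment of the local Laplacian contains a genuine gap when $N\ge 2$. You assert that $\mathcal{T}\ge 0$ ``thanks to the concavity of $\phi$'', but when you test separately with $\varphi_1(x)=L\phi(|x-\bar y|)+L_0|x-x_0|^2$ from above at $\bar x$ and $\varphi_2(y)=-L\phi(|\bar x-y|)-L_0|y-x_0|^2$ from below at $\bar y$, subtracting the two viscosity inequalities produces
\[
-\eta\bigl(\Delta\varphi_1(\bar x)-\Delta\varphi_2(\bar y)\bigr)=-2\eta L\Bigl(\phi''(r)+\tfrac{N-1}{r}\phi'(r)\Bigr)-4\eta NL_0,\qquad r:=|\bar x-\bar y|.
\]
For $\phi(t)\sim t^\alpha$ this is $-2\eta L\,\alpha(\alpha+N-2)r^{\alpha-2}-4\eta NL_0$, which is \emph{negative} for $N\ge 2$; worse, it is of order $\eta L r^{\alpha-2}$, and since $s<1$ this dominates your nonlocal gain $c_\star L r^{\alpha-2s}$ as $r\to 0$, destroying the contradiction for any fixed $\eta>0$. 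Concavity of $\phi$ controls only the radial second derivative, not the full Laplacian in $\R^N$.

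The paper circumvents this precisely by \emph{not} testing separately: it regularizes via sup/inf convolutions, invokes Jensen's Lemma to find a point where the doubled function is twice differentiable, and then uses the maximum condition in the diagonal direction $(z,z)\in\R^{2N}$, where the Hessian of $(x_1,x_2)\mapsto|x_1-x_2|^\alpha$ vanishes identically. This yields $D^2u^{\epsilon}(x_1^\delta)-D^2u_{\epsilon}(x_2^\delta)\le C\|u\|_{L^\infty}I_N$, a bound independent of $L$ and $r$, so the local term contributes only a harmless additive constant. Your argument is correct as written for $N=1$ (there the Laplacian \emph{is} $\phi''<0$), but for the lemma as stated in $\R^N$ you need either the full Theorem on Sums machinery or some other device that exploits the diagonal structure of the doubled problem.
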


\begin{proof}
We use appropriate
techniques from the theory of regularity of viscosity solutions of uniformly elliptic  second-order local operators,
see~\cite{il}, and   recently extended 
to nonlocal operators, see e.g.~\cites{MR2735074,MR2946964}, adapted to our context.
Let us introduce the following notation: given~$r>0$,
for a function $\phi$ we define
$$\I^{1,r}\phi(x):=\int_{\{ {|z|}\leq r\}}(\phi(x)-\phi(x+z)+
\nabla \phi(x)\cdot z)K(z)\,dz$$ and 
$$\I^{2,r}\phi(x):=\int_{\{{|z|}\geq r\}}(\phi(x)-\phi(x+z))K(z)\,dz.$$ 
Then,
\begin{equation}\label{3.9bis}
\I \phi(x)=\I^{1,r}\phi(x)+\I^{2,r}\phi(x).\end{equation}
We let $\phi\in C^\infty(\R^N;\R_0^+)\cap W^{2,\infty}(\R^N)$ be  such that   $\phi(x)=0$ for all $x\in { B_{1/2} }$ and $\phi(x)\geq 1$ for all $x\in { { {\R^N}\setminus B_{3/4} }}$.
We then define 
\begin{equation}\label{psiholdthmdef}\psi(x):=2\|u\|_{L^\infty (\R^N)}\phi(x).\end{equation}
Since $\phi\equiv 0$ in ${ B_{1/2} }$, to prove that $u\in C^{0,\alpha}({ B_{1/2} })$ for any $\alpha<2s$,  it is enough to show 
that given any~$\alpha<2s$, with~$\alpha\in(0,1)$, 
there exists $L>0$ such that, for all $x_1,x_2\in\R^N$, 
\begin{equation}\label{holderplupsi}
u(x_1)-u(x_2)- L|x_1-x_2|^\alpha-\psi(x_1)\leq 0.
\end{equation}
We argue by contradiction, assuming that \eqref{holderplupsi} does not hold  true. 
%$u$ does not belong to $C^{0,\al}(\R^N)$ for any~$\alpha<2s$.
For $\ep>0$, let $u^{\ep}$ and $u_{\ep}$
be respectively the  sup and inf convolution of
$u$ in $\R^N$, i.e.,
\begin{eqnarray*}
u^{\ep}(x)&:=&\sup_{y\in \R^N}\left(u(y)-\frac{1}{2\ep}|x-y|^2\right)\\
{\mbox{and }}\quad u_{\ep}(x)&:=&\inf_{y\in \R^N}\left(u(y)
+\frac{1}{2\ep}|x-y|^2\right).\end{eqnarray*}
We notice that
\begin{equation}\label{3.9ter}
u^{\epsilon}(x)\ge u(x) \ge u_{\epsilon}(x).
\end{equation}
Moreover,
$u^{\ep}$ is  semiconvex and is a subsolution of \eqref{eq1283ieNDsub} in $B_{{1}-\rho}$
%$$-\eta\Delta u^{\ep}+{\mathcal{L}}_s u^{\ep}+f_1=0\quad{\mbox{ in }}\;\R^n,$$
and~$u_{\ep}$ is semiconcave and is a supersolution
of~\eqref{eq1283ieNDsuper} in $B_{{1}-\rho}$, for some $\rho=\rho(\ep)>0$,
which is infinitesimal as~$\e\searrow0$, 
see e.g.
Proposition~III.2 in~\cite{MR1116854}.

Since  \eqref{holderplupsi} does not hold true, there exists $\al\in(0,2s)$
such that, for
any~$L>0$ and~$\ep>0$,
\begin{eqnarray*}&&
\sup_{(x_1,x_2)\in \R^{2N}}u^{\ep}(x_1)-u_{\ep}(x_2)-L|x_1-x_2|^\al-\psi(x_1)\\&
 \geq& \sup_{(x_1,x_2)\in\R^{2N}}u(x_1)-u(x_2)-L|x_1-x_2|^\al-\psi(x_1)>0,
 \end{eqnarray*}
where we also used~\eqref{3.9ter}.
Then, for any $L>0$ and
$\ep>0$, 
the supremum on $\R^{2N}$ of the function 
\begin{equation}\label{newlabellemmhold1}u^{\ep}(x_1)-u_{\ep}(x_2)-L|x_1-x_2|^\al-\psi(x_1)\end{equation}
is positive and is attained at some point~$(\xs_1,\xs_2)\in  \R^{2N}$.
Moreover, for $\ep$ small enough, we have that~$\xs_1\neq\xs_2$. 
We remark that
\begin{equation}\label{eq::s11}
|\xs_1-\xs_2|\leq\left(\frac{2\|u\|_{L^\infty(\R^N)}}{L}\right)^\frac{1}{\al}.
\end{equation}
Using that  $\phi\ge 1$ in $ {\R^N\setminus B_{3/4} }$ and  \eqref{psiholdthmdef},
we see that, for all $x_1\in {\R^N\setminus B_{3/4} }$,
$$u^{\ep}(x_1)-u_{\ep}(x_2)-L|x_1-x_2|^\al-\psi(x_1)\leq u^{\ep}(x_1)-u_{\ep}(x_2)-2\|u\|_{L^\infty(\R^N)}\le o_\ep(1),$$
where $o_\ep(1)\to 0$ as $\ep\searrow0$. 
Thus we must have $x_1\in { B_{3/4} }$ for $\ep$ small enough, and by \eqref{eq::s11},
if \begin{equation*}%\label{firtscondL}
L\ge16\|u\|_{L^\infty(\R^N)},\end{equation*}
 we have that  
 \begin{equation}\label{x2x2assinholreg}\xs_1,\xs_2\in { B_{7/8} }.\end{equation}
The function in \eqref{newlabellemmhold1} is semiconvex,
hence, by Aleksandrov's Theorem, twice differentiable almost eve\-ry\-where. 
Let us now introduce a perturbation of it, for which we
can choose maximum points of twice differentiability. 
 
First
we transform $(\xs_1,\xs_2)$ into a strict
maximum point. In order to do that, we consider a smooth function
$h:\R^+\rightarrow\R$, with compact support, such that
$h(0)=0$ and $h(t)>0$ for $0<t<1$, we fix a small $\beta>0$ and we set
$$\theta(x_1,x_2):=\beta h(|x_1-\xs_1|^2)+\beta h(|x_2-\xs_2|^2).$$
Clearly, $(\xs_1,\xs_2)$ is a strict maximum point of
$u^{\ep}(x_1)-u_{\ep}(x_2)-L|x_1-x_2|^\alpha-\psi(x_1)-\theta(x_1,x_2)$.

Next we consider a smooth function $\tau:\R^N\rightarrow\R$ such
that $\tau(x)=1$ if $|x|\leq1/2$ and $\tau(x)=0$ for $|x|\geq1$.
By Jensen's Lemma, see e.g. Lemma~A.3 of~\cite{MR1118699},
for every small and positive $\delta$, there
exist $q_1^\delta,\,q_2^\delta\in\R^N$ with
$|q_1^\delta|\,,|q_2^\delta|\leq \delta$,
such that the function
\begin{equation}\label{jensen}\Phi(x_1,x_2):=u^{\ep}(x_1)-u_{\ep}(x_2)
-L|x_1-x_2|^\al-\varphi_1(x_1)-\varphi_2(x_2),\end{equation}
where
\begin{eqnarray*}
\varphi_1(x_1)&:=&\psi(x_1)+\beta h(|x_1-\xs_1|^2)+
\tau(x_1-\xs_1)q_1^\delta\cdot x_1,\\
{\mbox{and }}\quad \varphi_2(x_2)&:=&\beta h(|x_2-\xs_2|^2)+
\tau(x_2-\xs_2)q_2^\delta\cdot x_2,\end{eqnarray*}
has a maximum at $(x_1^\delta,x_2^\delta)$, with
\begin{equation}\label{eq::s12}
|x_1^\delta-\xs_1|,\,|x_2^\delta-\xs_2|\leq\delta
\end{equation}
and $u^{\ep}(x_1)-u_{\ep}(x_2)$ is twice
differentiable at $(x_1^\delta,x_2^\delta)$. In
particular, $u^{\ep}$ is twice differentiable with respect
to~$x_1$ at~$x_1^\delta$ and~$u_{\ep}$ is twice differentiable
with respect to~$x_2$ at~$x_2^\delta$.

We remark that the function $\tau$ has been introduced to
make $\I^{2,r}\varphi_1$ and $\I^{2,r}\varphi_2$ finite.
Also, for $\delta$ small
enough, and $\rho$ small enough, by \eqref{x2x2assinholreg} and \eqref{eq::s12}, 
 we have that
 \begin{equation}\label{x1x2intherightset}x_1^\delta,x_2^\delta\in B_{1-\rho},\end{equation}
 and that~$x_1^\delta\neq x_2^\delta$. In particular, 
 this will allow us to compute the derivatives of the function in~\eqref{jensen}.

Since
$(x_1^\delta,x_2^\delta)$ is a maximum point for~$\Phi$, we have 
\begin{equation}\label{gradientzerohold}\begin{split}
\nabla u^{\ep}(x_1^\delta)&=\nabla\varphi_1(x_1^\delta)+\al
L|x_1^\delta-x_2^\delta|^{\al-2}(x_1^\delta-x_2^\delta)\\{\mbox{and }}\quad
\nabla u_{\ep}(x_2^\delta)&=-\nabla\varphi_2(x_2^\delta)+\al
L|x_1^\delta-x_2^\delta|^{\al-2}(x_1^\delta-x_2^\delta).\end{split}\end{equation}
%and 
%\begin{equation}\label{hessianzerohold}\begin{split}
%&\Delta u^{\ep}(x_1^\delta)\leq\Delta\varphi_1(x_1^\delta)+K\alpha(n-2+\alpha)|x_1^\delta-x_2^\delta|^{\al-2}\\&
%\Delta u_{\ep}(x_1^\delta)\geq-\Delta\varphi_2(x_2^\delta)-K\alpha(n-2+\alpha)|x_1^\delta-x_2^\delta|^{\al-2}.
%\end{split}\end{equation}
Moreover the inequalities
\begin{eqnarray*}
&& \Phi(x_1^\delta+z,x_2^\delta)\leq 
\Phi(x_1^\delta,x_2^\delta),\\
&&\Phi(x_1^\delta,x_2^\delta+z)\leq 
\Phi(x_1^\delta,x_2^\delta)\\
{\mbox{and }}&&
\Phi(x_1^\delta+z,x_2^\delta+z)\leq 
\Phi(x_1^\delta,x_2^\delta),\end{eqnarray*}
for any $z\in\R^N$, together with~\eqref{gradientzerohold}, give respectively: 
\begin{equation}\label{holdthmdis2}\begin{split}
&u^{\ep}(x_1^\delta+z)-u^{\ep}(x_1^\delta)
-\nabla u^{\ep}(x_1^\delta)\cdot z\\ \leq \;&
\varphi_1(x_1^\delta+z)-\varphi_1(x_1^\delta)
-\nabla\varphi_1(x_1^\delta)\cdot z\\&\quad 
+L|x_1^\delta+z-x_2^\delta|^\al-L|x_1^\delta-x_2^\delta|^\al-\al
L|x_1^\delta-x_2^\delta|^{\al-2}(x_1^\delta-x_2^\delta)\cdot z,
\end{split}\end{equation}
and
\begin{equation}\label{holdthmdis3}\begin{split}&-(u_{\ep}
(x_2^\delta+z)-u_{\ep}(x_2^\delta)-\nabla u_{\ep}(x_2^\delta)\cdot
z)\\ \leq\;&
\varphi_2(x_2^\delta+z)-\varphi_2(x_2^\delta)-\nabla\varphi_2(x_2^\delta)\cdot
z\\&\quad +L|x_1^\delta-z-x_2^\delta|^\al-L|x_1^\delta-x_2^\delta|^\al+\al
L|x_1^\delta-x_2^\delta|^{\al-2}(x_1^\delta-x_2^\delta)\cdot z,
\end{split}\end{equation}
and, for any $r>0$,
\begin{equation}\label{holdthmdis}\begin{split}
&u^{\ep}(x_1^\delta+z)-u^{\ep}(x_1^\delta)
-{\chi}_{B_{r}}(z)\nabla u^{\ep}(x_1^\delta)\cdot z
\\ \leq\;& u_{\ep}
(x_2^\delta+z)-u_{\ep}(x_2^\delta)-{\chi}_{B_{r}}(z)\nabla u_{\ep}(x_2^\delta)\cdot
z\\&\quad+\varphi_1(x_1^\delta+z)-\varphi_1(x_1^\delta)
-{\chi}_{B_{r}}(z)\nabla \varphi_1(x_1^\delta)\cdot z
\\&\quad
+\varphi_2(x_2^\delta+z)-\varphi_2(x_2^\delta)-{\chi}_{B_{r}}(z)\nabla\varphi_2(x_2^\delta)\cdot
z.\end{split}\end{equation} 
The last inequality in particular implies that
\begin{equation}\label{holdthmdis4}\I^{2,r}u^{\ep}(x_1^\delta){\geq}
\I^{2,r} u_{\ep}(x_2^\delta)
+\I^{2,r}\varphi_1(x_1^\delta)+\I^{2,r}\varphi_2(x_2^\delta),\end{equation} and
\begin{equation}\label{holdthmdisbis}
D^2u^{\epsilon}(x_1^\delta)-D^2u_{\epsilon}
(x_2^\delta)\le C(\beta  +\|u\|_{L^\infty(\R^N)})I_N,\end{equation}
where $I_N$ is the $N\times N$ identity matrix. Here and henceforth $C$ denotes various
positive constants independent of the pa\-ra\-me\-ters.

Now, using that $u^\ep$ and $u_\ep$ are respectively subsolution
of~\eqref{eq1283ieNDsub} and supersolution of~\eqref{eq1283ieNDsuper} in $B_{1-\rho} $, 
and recalling~\eqref{3.9bis} and~\eqref{x1x2intherightset},
we have that 
\begin{equation}\label{subineq}
-\eta\Delta u^\ep(x_1^\delta)+\I^{1,r}u^\ep(x_1^\delta)+\I^{2,r}u^\ep(x_1^\delta)+f_1\le0
\end{equation} 
and 
\begin{equation}\label{superineq}
-\eta\Delta u_\ep(x_2^\delta)+\I^{1,r}u_\ep(x_2^\delta)+\I^{2,r}u_\ep(x_2^\delta)
+f_2\ge0.\end{equation} 
Thus, by subtracting \eqref{superineq} to \eqref{subineq} and using \eqref{holdthmdis4} and \eqref{holdthmdisbis}, we obtain
\begin{equation}\label{subineq-superin}\I^{1,r}u^\ep(x_1^\delta)
-\I^{1,r}u_\ep(x_2^\delta)+f_1-f_2-C(\beta +\|u\|_{L^\infty(\R^N)})\le0.
\end{equation}
Next, let us estimate the term
$\I^{1,r}u^{\ep}(x_1^\delta)-\I^{1,r}u_{\ep}(x_2^\delta)$
and show that it contains a main negative part. For $0<\nu_0<1$,
let us denote by $A_r$ the cone
$$A_r:=\left\{z\in B_r\,,\,|z\cdot (x_1^\delta-x_2^\delta)|\geq
\nu_0|z||x_1^\delta-x_2^\delta|\right\}.$$ Then {\footnotesize{
\begin{equation}\label{I1r}\begin{split}
&\I^{1,r}u^{\ep}(x_1^\delta)-\I^{1,r}u_{\ep}(x_2^\delta)\\=\;&
-\int_{A_r}\Big[u^{\ep}(x_1^\delta+z)-u^{\ep}(x_1^\delta)
-\nabla u^{\ep}(x_1^\delta)\cdot z -(
u_{\ep}
(x_2^\delta+z)-u_{\ep}(x_2^\delta)-\nabla u_{\ep}(x_2^\delta)\cdot
z )\Big]K(z)\,dz\\& -
\int_{B_r\setminus
A_r}\Big[u^{\ep}(x_1^\delta+z)-u^{\ep}(x_1^\delta)
-\nabla u^{\ep}(x_1^\delta)\cdot z -(
u_{\ep}
(x_2^\delta+z)-u_{\ep}(x_2^\delta)-\nabla u_{\ep}(x_2^\delta)\cdot
z )\Big]K(z)\,dz
\\=:\;&-T_1-T_2.\end{split}\end{equation}}}
\noindent {F}rom \eqref{holdthmdis}
we have 
\begin{equation}\label{T2}T_2\leq C(\beta +\|u\|_{L^\infty(\R^N)}).\end{equation}
Let us
estimate $T_1$. Using \eqref{holdthmdis2} and \eqref{holdthmdis3}, and successively making the change of variable $z\rightarrow -z$, we get the following estimate of $T_1$: {\footnotesize{
 \begin{equation*}\begin{split} T_1&\leq
\int_{A_r}\Big[L|x_1^\delta+z-x_2^\delta|^\al-L|x_1^\delta-x_2^\delta|^\al-\al
L|x_1^\delta-x_2^\delta|^{\al-2}(x_1^\delta-x_2^\delta)\cdot
z\Big]K(z)\,dz\\&\qquad
+C(\beta +\|u\|_{L^\infty(\R^N)})\\&\qquad +\int_{A_r}\Big[L|x_1^\delta-z-x_2^\delta|^\al
-L|x_1^\delta-x_2^\delta|^\al+\al
L|x_1^\delta-x_2^\delta|^{\al-2}(x_1^\delta-x_2^\delta)\cdot
z\Big]K(z)\,dz\\&=2\int_{A_r}\Big[L|x_1^\delta+z-x_2^\delta|^\al-L|x_1^\delta-x_2^\delta|^\al-\al
L|x_1^\delta-x_2^\delta|^{\al-2}(x_1^\delta-x_2^\delta)\cdot
z\Big]K(z)\,dz\\&\qquad+C(\beta +\|u\|_{L^\infty(\R^N)})
\\& \leq\al L\int_{A_r}\sup_{\{|t|\leq1\}}\Big\{|x_1^\delta-x_2^\delta+tz|^{\al-4}
\big(|x_1^\delta-x_2^\delta+tz|^2|z|^2
%\\&\qquad\qquad\qquad\qquad
-(2-\al)[(x_1^\delta-x_2^\delta+tz)\cdot
z]^2\big)\Big\}K(z)\,dz\\&\qquad +C(\beta +\|u\|_{L^\infty(\R^N)}).\end{split}\end{equation*}}}
\noindent Let us fix
$r:=\sigma|x_1^\delta-x_2^\delta|$, for some~$\sigma>0$. Then, for $z\in A_r$,
\begin{eqnarray*}
&& |x_1^\delta-x_2^\delta+tz|\leq
(1+\sigma)|x_1^\delta-x_2^\delta|\\
{\mbox{and }} && |(x_1^\delta-x_2^\delta+tz)\cdot
z|\geq|(x_1^\delta-x_2^\delta)\cdot z|-|z|^2\geq
\left(\nu_0-\sigma\right)|x_1^\delta-x_2^\delta||z|.\end{eqnarray*}
Let us
choose $0<\sigma<\nu_0<1$ such  that
$$C_0:=-(1+\sigma)^2+(2-\al)(\nu_0-\sigma)^2>0,$$then by \eqref{KERNEL},
 \begin{equation}\label{T_1} \begin{split}T_1&\leq
-CC_0 L|x_1^\delta-x_2^\delta|^{\al-2}
\int_{A_r}|z|^2K(z)\,dz +C(\beta +\|u\|_{L^\infty(\R^N)})\\&\leq -CC_0 L|x_1^\delta 
-x_2^\delta|^{\al-2}r^{2-2s}+C(\beta +\|u\|_{L^\infty(\R^N)})\\&
\leq  -CC_0 L|x_1^\delta -x_2^\delta|^{\al-2s}+C(\beta +\|u\|_{L^\infty(\R^N)}).
\end{split}\end{equation}
From \eqref{subineq-superin}, \eqref{I1r}, \eqref{T2} and  \eqref{T_1}, we obtain
\begin{equation*}\begin{split}CC_0 L|x_1^\delta -x_2^\delta|^{\al-2s}-C(\beta +\|u\|_{L^\infty(\R^N)})+f_1-f_2\le 0.
\end{split}\end{equation*}
Letting $\delta$ go to~$0$, the last inequality and \eqref{eq::s12} yield
\begin{equation*}CC_0L|\xs_1-\xs_2|^{\al-2s}\leq C(\beta +\|u\|_{L^\infty(\R^N)})+f_2-f_1.\end{equation*}
Thus, since  $\al-2s<0$, using \eqref{eq::s11} and letting $\beta$ go to $0$,
we finally obtain
\begin{equation*}
L\le C\left(f_2-f_1+\|u\|_{L^\infty(\R^N)}\right)^\frac{\alpha}{2s}\|u\|_{L^\infty(\R^N)}^{1-\frac{\alpha}{2s}}.
\end{equation*}
Since $L$ was chosen as big as we wish, we get   a contradiction
for
$$L> C\left(f_2-f_1+\|u\|_{L^\infty(\R^N)}\right)^\frac{\alpha}{2s}\|u\|_{L^\infty(\R^N)}^{1-\frac{\alpha}{2s}}.$$ This proves \eqref{mainholdest}.
\end{proof}

With the aid of Lemma~\ref{etaregularitylemmaND}, we can prove
the following regularity result (with uniform bounds):

\begin{lemma}\label{IL NUOVO}
Let~$T>1$, $\eta$, ${\mu}\in (0,1)$, $\rho>0$,  $\zeta\in\mathcal{Z}$.
Let~$Q\in L^\infty(\R)$ be a solution 
of
$$-\eta \ddot{Q}(x)+{\mu (Q-Q^\sharp_{\zeta_1,\zeta_2})}+{\mathcal{L}}(Q)(x)+a(x)  W'(Q(x))=0,
\quad {\mbox{ for any }} x\in(-4T,4T).$$
Suppose that
\begin{equation}\label{097ihkGHJHG:00}
{\mbox{$Q(x)\in\overline{B_\rho(\zeta)}$ for any~$x\in
(-4T,4T)$. 
}}\end{equation}
Then, for any $\alpha\in(0,2s),$
\begin{equation}\label{holdercleaninterval}
[Q]_{C^{0,\alpha}(-T,T)}\le
C T^{-\alpha} \left(\|Q\|_{L^\infty(\R)}+
T^{2s}\rho+{T^{2s}\mu}\right)^{\frac{\alpha}{2s}}\rho^{1-\frac{\alpha}{2s}}
,\end{equation}
for some~$C>0$ independent of $\eta$ and depending on~structural constants.
\end{lemma}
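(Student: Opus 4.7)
The plan is to apply Lemma~\ref{etaregularitylemmaND} after rescaling from $(-4T,4T)$ down to the unit ball and truncating $Q-\zeta$ at height $\rho$, the latter being essential to produce the factor $\rho^{1-\alpha/(2s)}$ on the right-hand side rather than the much larger $\|Q\|_{L^\infty(\R)}^{1-\alpha/(2s)}$. Setting $v(x):=Q(2Tx)-\zeta$ and $\bar K(r):=(2T)^{1+2s}K(2Tr)$, one checks that $\bar K$ satisfies~\eqref{KERNEL} with $r_0$ replaced by $r_0/(2T)$ and that the associated nonlocal operator $\bar{\mathcal{L}}$ obeys $\mathcal{L}Q(2Tx)=(2T)^{-2s}\bar{\mathcal{L}}v(x)$. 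Multiplying the PDE for $Q$ by $(2T)^{2s}$ yields
\begin{equation*}
-\bar\eta\,\ddot v(x)+\bar{\mathcal{L}}v(x)+g(x)=0\qquad\text{in }B_2,
\end{equation*}
with $\bar\eta:=\eta(2T)^{2s-2}\in(0,1]$ (since $T>1$ and $2s\le 1$) and
\begin{equation*}
g(x):=(2T)^{2s}\bigl[\mu\bigl(v(x)+\zeta-Q^\sharp_{\zeta_1,\zeta_2}(2Tx)\bigr)+a(2Tx)\,W'\bigl(v(x)+\zeta\bigr)\bigr].
\end{equation*}
By~\eqref{097ihkGHJHG:00}, $|v(x)|\le\rho$ on $B_2$, while globally only $\|v\|_{L^\infty(\R)}\le\|Q\|_{L^\infty(\R)}+|\zeta|$.

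Next, I truncate: define $w(x):=\max\{-\rho,\min\{\rho,v(x)\}\}$, so that $\|w\|_{L^\infty(\R)}\le\rho$ and $w\equiv v$ on $B_2$. In particular $\ddot w=\ddot v$ on $B_1$, while
\begin{equation*}
(\bar{\mathcal{L}}w-\bar{\mathcal{L}}v)(x)=-\int_{\R\setminus B_2}(w-v)(y)\,\bar K(x-y)\,dy
\end{equation*}
is, for $x\in B_1$, uniformly bounded by $C(\|Q\|_{L^\infty(\R)}+\rho)$, using $|w-v|\le|v|+\rho$, $|x-y|\ge 1$, and the decay $\bar K(r)\le\Theta_0|r|^{-1-2s}$. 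Thus $w$ solves $-\bar\eta\,\ddot w+\bar{\mathcal{L}}w+\tilde g=0$ on $B_1$, with $\tilde g:=g-(\bar{\mathcal{L}}w-\bar{\mathcal{L}}v)$. Since $\zeta\in\mathcal{Z}$ is a minimum of $W\in C^2$, we have $W'(\zeta)=0$ and hence $|W'(v+\zeta)|\le C\rho$ on $B_1$ by Taylor; combined with the boundedness of $a$ and of $Q^\sharp_{\zeta_1,\zeta_2}$, this gives
\begin{equation*}
\|\tilde g\|_{L^\infty(B_1)}\le C\bigl[(2T)^{2s}\mu+(2T)^{2s}\rho+\|Q\|_{L^\infty(\R)}\bigr].
\end{equation*}

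Applying Lemma~\ref{etaregularitylemmaND} to $w$ with $f_1,f_2:=\inf_{B_1}\tilde g,\sup_{B_1}\tilde g$ then yields
\begin{equation*}
[w]_{C^{0,\alpha}(B_{1/2})}\le C\bigl[(2T)^{2s}\mu+(2T)^{2s}\rho+\|Q\|_{L^\infty(\R)}\bigr]^{\alpha/(2s)}\rho^{1-\alpha/(2s)},
\end{equation*}
together with an additive ``threshold'' contribution of order $\rho(2T)^\alpha$, arising because the rescaled kernel's lower-bound interval is only $r_0/(2T)$. Since $w\equiv v$ on $B_{1/2}$ and $[Q]_{C^{0,\alpha}(-T,T)}=(2T)^{-\alpha}[v]_{C^{0,\alpha}(B_{1/2})}$, the main term acquires the desired $T^{-\alpha}$ prefactor upon rescaling back, while the threshold contributes only $C\rho$, absorbed into the main term via the algebraic identity $\rho=T^{-\alpha}(T^{2s}\rho)^{\alpha/(2s)}\rho^{1-\alpha/(2s)}$; this proves~\eqref{holdercleaninterval}. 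The principal obstacle is the truncation step: omitting it would produce $\|v\|_{L^\infty(\R)}^{1-\alpha/(2s)}\sim\|Q\|_{L^\infty(\R)}^{1-\alpha/(2s)}$ on the right-hand side, which is far too large; fortunately the nonlocal error the truncation introduces is tractable thanks to the integrability of $\bar K$ at infinity, contributing only $O(\|Q\|_{L^\infty(\R)}+\rho)$ to the source term and precisely matching the $\|Q\|_{L^\infty(\R)}$ term in the target bound.
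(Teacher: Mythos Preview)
Your proof is correct and follows essentially the same strategy as the paper: localize so that the function has global $L^\infty$ norm at most $\rho$, rescale to the unit interval, apply Lemma~\ref{etaregularitylemmaND}, and scale back. The only difference is the localization device: the paper (after translating $\zeta$ to $0$) multiplies $Q$ by a smooth spatial cutoff $\tau_o(\cdot/T)$ supported in $[-4T,4T]$, obtaining $u:=\tau Q$ with $\|u\|_{L^\infty(\R)}\le\rho$, and bounds the commutator $\mathcal{L}u-\mathcal{L}Q$ by $C\|Q\|_{L^\infty(\R)}T^{-2s}$ on $(-2T,2T)$ (citing Lemma~4.1 of~\cite{MR3594365}); you instead truncate at height $\pm\rho$ after rescaling and bound the analogous nonlocal error directly. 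Both localizations produce the same source-term estimate, and your explicit treatment of the shrunken lower-bound interval $r_0/(2T)$ for the rescaled kernel is a point the paper leaves implicit.
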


\begin{proof}
Up to a translation, we assume that~$\zeta=0$,
hence~\eqref{097ihkGHJHG:00} becomes
\begin{equation}\label{097ihkGHJHG}
{\mbox{$|Q(x)|\le\rho$, for any~$x\in
(-4T,4T)$.
}}\end{equation}
We let~$\tau_o\in C^\infty_0([-4,4],[0,1])$ be
such that~$\tau_o(x)=1$ for any~$x\in \left[ -3,3\right]$.
We define~$\tau(x):= \tau_o(x/T)$ and~$u(x):=\tau(x)\, Q(x)$.
Notice that, by~\eqref{097ihkGHJHG},
\begin{equation}\label{9iokdf67896pp}
{\mbox{$|u(x)|\le\rho$ for any~$x\in
\R$.}}
\end{equation}
Arguing as in  Lemma 4.1 in~\cite{MR3594365}, we see that~$u$ is solution of 
$$-\eta \ddot{u}+\mathcal{L}(u)=f\quad\text{in }(-2T,2T),$$
for some function $f$ satisfying
\begin{equation*}
\|f\|_{L^\infty(-2T,2T)}\le 
\frac{C\|Q\|_{L^\infty(\R)}}{T^{2s}}+
C\rho+{C\mu},\end{equation*}
with~$C>0$ independent of~$\eta$. 
Let~$v(x):=u(Tx)$, then~$v$ is a solution of 
$$-\eta T^{2(s-1)} \ddot v+\mathcal{L}(v)=T^{2s}f\quad\text{in }(-2,2).$$
Therefore, by Lemma~\ref{etaregularitylemmaND}
and~\eqref{9iokdf67896pp}, we have that, for any~$\alpha\in(0, 2s)$,
$$[v]_{C^{0,\alpha}(-1,1)}\leq
C\left(\|Q\|_{L^\infty(\R)}+T^{2s}\rho+{T^{2s}\mu}\right)^{\frac{\alpha}{2s}}
\rho^{1-\frac{\alpha}{2s}}.$$
Scaling back we get \eqref{holdercleaninterval}.
\end{proof}

\section{Energy estimates}\label{S:3}

The goal of this section is to provide suitable integral estimates,
with the aim of bounding the energy from below (this bound is crucial
to apply minimization methods in the variational arguments
exploited in the forthcoming Lemma~\ref{78:77:76}).
More precisely, we provide a bound on the ``mixed term''
of the energy functional, as defined in~\eqref{0q0303030303s}.
We remark that this is somehow an ``unpleasant''
term to consider, since, differently from the classical case,
it cannot be reabsorbed into the quadratic terms in the energy
by the Cauchy-Schwarz inequality, since it would produce
infinite contributions when arguing in this way.

\begin{lemma}\label{LEMMA1} 
Let~$v\in L^\infty(\R)$.
Then
\begin{eqnarray*}&& 
\big|\B_{\R,\R}(v,Q_{\zeta_1,\zeta_2}^\sharp) \big| \le {
\const \|Q_{\zeta_1,\zeta_2}^\sharp\|_{C^1(\R)}\,\Big(
[v]_{K,\R\times\R}+\|v\|_{L^2(\R)}\Big)}.\end{eqnarray*}
\end{lemma}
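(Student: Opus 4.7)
The plan is to exploit three features of $Q^\sharp := Q_{\zeta_1,\zeta_2}^\sharp$: it is globally Lipschitz, it is bounded, and it is constant outside the compact set $[-1,1]$. Partition $\R = I_-\cup I_0 \cup I_+$ with $I_- := (-\infty,-1)$, $I_0 := [-1,1]$, $I_+ := (1,\infty)$, and decompose $\B_{\R,\R}(v,Q^\sharp)$ into the nine sub-integrals over $I_a\times I_b$. Two of them, $\B_{I_-,I_-}$ and $\B_{I_+,I_+}$, vanish identically because $Q^\sharp(x) - Q^\sharp(y) \equiv 0$ there. By the symmetry~\eqref{BIJJI}, and the obvious further symmetry $I_- \leftrightarrow I_+$, only three distinct contributions remain to be estimated: $\B_{I_0,I_0}$, $\B_{I_0,I_+}$, and $\B_{I_-,I_+}$.

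For the bounded box $I_0 \times I_0$, I would combine the global Lipschitz bound $|Q^\sharp(x) - Q^\sharp(y)| \le \|Q^\sharp\|_{C^1}|x-y|$ with Cauchy--Schwarz, yielding
\[
|\B_{I_0,I_0}(v,Q^\sharp)| \le \|Q^\sharp\|_{C^1}\, [v]_{K,\R\times\R} \left( \iint_{I_0 \times I_0} |x-y|^2 K(x-y)\, dx\, dy\right)^{1/2}.
\]
The weight integral is finite because $|x-y|^2 K(x-y) \le \Theta_0 |x-y|^{1-2s}$ is locally integrable. The very same argument handles the portion of $\B_{I_0,I_+}$ (and analogously $\B_{I_0,I_-}$) where $|x-y|\le 1$, since that piece is confined to the bounded box $[-1,1] \times [1,2]$.

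The substance of the proof is in the unbounded portions: $\B_{I_0,I_+}\cap\{|x-y|>1\}$ and $\B_{I_-,I_+}$. On both, $|x-y|$ is bounded away from zero, so I would replace $|Q^\sharp(x) - Q^\sharp(y)|$ by $2\|Q^\sharp\|_{L^\infty}\le C\|Q^\sharp\|_{C^1}$, split $|v(x) - v(y)| \le |v(x)| + |v(y)|$, and apply Fubini. Each term then reduces to an estimate of the form $\int |v(x)|\,w(x)\,dx$, where for $x\in I_-$ the weight is
\[
w(x) := \int_{\{y>1\}} K(x-y)\, dy \le \frac{\Theta_0}{2s}(1+|x|)^{-2s}
\]
and analogous expressions hold in the remaining cases. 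A final Cauchy--Schwarz against $\|v\|_{L^2(\R)}$ closes the estimate provided $w\in L^2(\R)$, i.e.\ provided $\int^{\infty}|x|^{-4s}\,dx < \infty$, which amounts to $s > \tfrac{1}{4}$ --- precisely the hypothesis~\eqref{RANGE}. Adding the three pieces yields $|\B_{\R,\R}(v,Q^\sharp)| \le C\|Q^\sharp\|_{C^1}\bigl([v]_{K,\R\times\R} + \|v\|_{L^2(\R)}\bigr)$.

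The main obstacle is therefore not any single computation but the need to juggle two incompatible regimes: the Lipschitz bound on $Q^\sharp$ is essential near the diagonal, where the kernel singularity must be tamed by the $[v]_K$ seminorm, yet it is useless on strips of infinite measure because $\iint |x-y|^2 K(x-y)\,dx\,dy$ diverges there; conversely, the $L^\infty$ bound on $Q^\sharp$ is essential far from the diagonal but fails to control the near-diagonal part. The transition must be made in a way that produces an integrated-kernel weight lying in $L^2(\R)$, and this is exactly the constraint that pins down the lower threshold $s>1/4$ of the parameter range considered in the paper.
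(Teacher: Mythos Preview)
Your proof is correct and follows essentially the same approach as the paper's: partition the line, discard the two corner blocks where $Q^\sharp$ is constant, handle the central block and the $I_-\times I_+$ block exactly as you do (Cauchy--Schwarz against $[v]_{K,\R\times\R}$ and $\|v\|_{L^2(\R)}$ respectively, the latter requiring $s>\tfrac14$). The only tactical difference is that the paper takes the central interval to be $[-2,2]$ rather than $[-1,1]$; since $Q^\sharp(x)-\zeta_2$ then vanishes for $x\in[1,2]$, the effective integration domain in the cross term $\B_{J,I_+}$ already has $|x-y|\ge 1$, so the paper can bound that entire piece by $[v]_{K,\R\times\R}$ via a single Cauchy--Schwarz, bypassing your near/far split.
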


\begin{proof} We set~$I_-:=(-\infty,-2)$, $I_+:=(2,+\infty)$ and~$J:=[-2,2]$, and we
notice that~$\B_{I_-,I_-}(v,Q_{\zeta_1,\zeta_2}^\sharp)=\B_{I_+,I_+}(v,Q_{\zeta_1,\zeta_2}^\sharp)=0$,
since~$Q_{\zeta_1,\zeta_2}^\sharp$ is constant on~$I_-\cup I_+$.
Using this and~\eqref{BIJJI},
we see that
\begin{equation}\label{STI:0}\begin{split}
\B_{\R,\R}(v,Q_{\zeta_1,\zeta_2}^\sharp) &=
\B_{J,J}(v,Q_{\zeta_1,\zeta_2}^\sharp)+
2\B_{J,I_-}(v,Q_{\zeta_1,\zeta_2}^\sharp)\\&
+2\B_{I_-,I_+}(v,Q_{\zeta_1,\zeta_2}^\sharp)
+2\B_{J,I_+}(v,Q_{\zeta_1,\zeta_2}^\sharp).
\end{split}\end{equation}
Moreover, if~$x\in[-2,1]$ and~$y\in(2,+\infty)$ we have that
$$ |x-y|=y-x\ge\frac{y}{2}+1-1=\frac{y}{2},$$
and hence, recalling~\eqref{KERNEL},  we have that
\begin{eqnarray*}&&
\iint_{[-2,1]\times (2,+\infty)} 
\big|Q_{\zeta_1,\zeta_2}^\sharp(x)-\zeta_2\big|^2\,K(x-y)\,dx\,dy\\&\le&
\const \|Q_{\zeta_1,\zeta_2}^\sharp\|_{L^\infty(\R)}^2
\iint_{[-2,1]\times (2,+\infty)} y^{-1-2s}\,dx\,dy\\&\le&
\const \|Q_{\zeta_1,\zeta_2}^\sharp\|_{L^\infty(\R)}^2 .
\end{eqnarray*} 
Therefore, by the Cauchy-Schwarz inequality we find that
\begin{equation}\label{STI:1}
\begin{split}&\big|
\B_{J,I_{+}}(v,Q_{\zeta_1,\zeta_2}^\sharp)\big|\\
=\,&\left|
\iint_{[-2,2]\times (2,+\infty)}\big( v(x)-v(y)\big)
\big(Q_{\zeta_1,\zeta_2}^\sharp(x)-\zeta_2\big)\,K(x-y)\,dx\,dy\right|
\\
\le\,&   \sqrt{
\iint_{[-2,1]\times (2,+\infty)} \big|
v(x)-v(y)\big|^2\,K(x-y)\,dx\,dy}\\
&\qquad\times\,\sqrt{
\iint_{[-2,1]\times (2,+\infty)} \big|
Q_{\zeta_1,\zeta_2}^\sharp(x)-\zeta_2\big|^2\,K_(x-y)\,dx\,dy}
\\ \le\,& \const \|Q_{\zeta_1,\zeta_2}^\sharp\|_{L^\infty(\R)}\,
[v]_{K,\R\times\R}
.\end{split}
\end{equation}
Similarly, we see that
\begin{equation}\label{STI:2}
\big|\B_{J,I_-}(v,Q_{\zeta_1,\zeta_2}^\sharp) \big|\le
\const \|Q_{\zeta_1,\zeta_2}^\sharp\|_{L^\infty(\R)}\,
[v]_{K,\R\times\R}.\end{equation}
Also, we have that
\begin{equation}\label{STI:33}\begin{split}
&\big|\B_{I_-,I_+}(v,Q_{\zeta_1,\zeta_2}^\sharp) \big|=
\left|
\iint_{(-\infty,-2)\times (2,+\infty)} 
\big( v(x)-v(y)\big)\big(\zeta_1-\zeta_2\big)\,K(x-y)\,dx\,dy\right|
\\ &\le
\const\,\|Q_{\zeta_1,\zeta_2}^\sharp\|_{L^\infty(\R)}
\iint_{(-\infty,-2)\times (2,+\infty)} \big(
|v(x)|+|v(y)|\big)\,(y-x)^{-1-2s}\,dx\,dy
\\ &\le
\const\,\|Q_{\zeta_1,\zeta_2}^\sharp\|_{L^\infty(\R)}\left[
\int_{(-\infty,-2)} \frac{|v(x)|}{(2-x)^{2s}}\,dx+\int_{(2,+\infty)} \frac{|v(y)|}{(y+2)^{2s}}\,dy
\right].
\end{split}
\end{equation}
In addition, using the Cauchy-Schwarz inequality we see that
\begin{equation}\label{00e:02eifrjgjeo38}
\begin{split}
\int_{(2,+\infty)} \frac{|v(y)|}{(y+2)^{2s}}\,dy\,&
\le \sqrt{ \int_{\R} |v(y)|^2\,dy\,
\int_{\R} \frac{dy}{(y+2)^{4s}}
}\\&\le  \const\,\sqrt{ \int_{\R} |v(x)|^2\,dx
}.
\end{split}
\end{equation}
We stress that we have used condition~\eqref{RANGE} here. 
Similarly,
$$ \int_{(-\infty,-2)} \frac{|v(x)|}{(2-x)^{2s}}\,dx\le\const\,\sqrt{ \int_{\R} |v(x)|^2\,dx}.$$
Plugging this and~\eqref{00e:02eifrjgjeo38} into~\eqref{STI:33},
we thus conclude that
\begin{equation}\label{ILaidnfg234} \big|\B_{I_-,I_+}(v,Q_{\zeta_1,\zeta_2}^\sharp) \big|\le\const\,\|Q_{\zeta_1,\zeta_2}^\sharp\|_{L^\infty(\R)}\,\sqrt{ \int_{\R} |v(x)|^2\,dx}.\end{equation}
Furthermore, by the Cauchy-Schwarz inequality and~\eqref{NORM}, we have that {\footnotesize{
\begin{equation}\label{STI:4}\begin{split}&
\big|\B_{J,J}(v,Q_{\zeta_1,\zeta_2}^\sharp) \big|\\ \le\;& 
\sqrt{\iint_{J\times J} \big| v(x)-v(y)\big|^2\,K(x-y)\,dx\,dy\;
\iint_{J\times J} \big|( Q_{\zeta_1,\zeta_2}^\sharp)(x)-Q_{\zeta_1,\zeta_2}^\sharp(y)\big|^2\,K(x-y)\,dx\,dy}
\\
\le\;& \const[v]_{K,\R\times\R}\sqrt{\iint_{J\times J} \big|( Q_{\zeta_1,\zeta_2}^\sharp)(x)-Q_{\zeta_1,\zeta_2}^\sharp(y)\big|^2\,K(x-y)\,dx\,dy}.
\end{split}\end{equation}}}
\noindent Now, using that~$Q_{\zeta_1,\zeta_2}^\sharp(x)=\zeta_1$
for any~$x\in\left(-\infty,-1\right)$ and~$Q_{\zeta_1,\zeta_2}^\sharp(x)=\zeta_2$
for any~$x\in\left(1,+\infty\right)$, we have that
\begin{equation*}\begin{split}
&\iint_{J\times J} \big|( Q_{\zeta_1,\zeta_2}^\sharp)(x)-Q_{\zeta_1,\zeta_2}^\sharp(y)\big|^2\,K(x-y)\,dx\,dy\\&\qquad
=\int_{-2}^{2}\int_{-2}^2 \big|( Q_{\zeta_1,\zeta_2}^\sharp)(x)-Q_{\zeta_1,\zeta_2}^\sharp(y)\big|^2\,K(x-y)\,dx\,dy
\le \const\|Q_{\zeta_1,\zeta_2}^\sharp\|_{C^1(\R)}^2.
\end{split}
\end{equation*}
{F}rom this and~\eqref{STI:4}, we find that
$$ \big|\B_{J,J}(v,Q_{\zeta_1,\zeta_2}^\sharp) \big|\le
\const[v]_{K,\R\times\R}\,\|Q_{\zeta_1,\zeta_2}^\sharp\|_{C^1(\R)}.$$
Combining this with~\eqref{STI:0}, \eqref{STI:1}, \eqref{STI:2}
and~\eqref{ILaidnfg234}, we obtain the desired result.
\end{proof}

\section{Variational methods and
constrained minimization for a perturbed problem}\label{S:4}

Fixed~$\zeta_1$, $\zeta_2\in{\mathcal{Z}}$
and~$r\in(0,\,\min\{\delta_0,r_0\}]$ (where~$r_0$ and~$\delta_0$
are given by \eqref{KERNEL} and~\eqref{POT:GROW}, respectively),
we construct constrained minimizers for our variational problems. To this aim,
we take~$b_1\le-1$ and~$b_2\ge1$ and consider~$\phi$ and~$\psi$ solutions to 
\begin{equation}\label{phi}
\begin{cases}
-\eta\ddot \phi+\mathcal{L}\phi=
C_0&\text{in }(b_1-\tau,b_2+\tau),\\
\phi=\zeta_1+r&\text{in }(-\infty, b_1-\tau],\\
\phi=\zeta_2+r&\text{in }[b_2+\tau,+\infty),
\end{cases}
\end{equation}
and 
\begin{equation}\label{psi}
\begin{cases}
-\eta\ddot\psi+\mathcal{L}\psi=
-C_0&\text{in }(b_1-\tau,b_2+\tau),\\
\psi=\zeta_1-r&\text{in }(-\infty, b_1-\tau],\\
\psi=\zeta_2-r&\text{in }[b_2+\tau,+\infty),
\end{cases}
\end{equation}
where
\begin{equation}\label{decizer}
C_0:=\|aW'\|_{L^\infty(\R)}+2|\zeta_1|+2|\zeta_2|+1\end{equation}  and~$\tau\in(0,1)$. 
%\begin{equation}\label{psi1psi2phi1phi2}\begin{split}
%&\phi_1(x):=\zeta_1+r+C_0[(x-b_1+\tau)^+]^s,\\&
%\phi_2(x):=\zeta_2+r+C_0[(x-b_2-\tau)^-]^s,\\&
%\psi_1(x):=\zeta_1-r-C_0[(x-b_1+\tau)^+]^s,\\&
%\psi_2(x):=\zeta_2-r-C_0[(x-b_2-\tau)^-]^s,
%\end{split}
%\end{equation}
%\begin{equation}\label{etaCbis}\phi_1(b_1)\leq \phi_2(b_1),\quad \phi_1(b_2)\geq \phi_2(b_2),
%\end{equation}
%and 
%\begin{equation}\label{etaCbisbis}\psi_1(b_1)\geq \psi_2(b_1),\quad \psi_1(b_2)\leq \psi_2(b_2).
%\end{equation}
%We then define, for~$x\in\R$,
%\begin{equation}\label{tildepsi}
%\tilde\phi(x):=\min\{\phi_1(x),\phi_2(x)\}
%\end{equation}
%\begin{equation}\label{tildephi}
%\tilde\psi(x):=\max\{\psi_1(x),\psi_2(x)\}.
%\end{equation}
%Notice that, by the definitions~\eqref{psi1psi2phi1phi2} and by~\eqref{etaCbis} and~\eqref{etaCbisbis}, we have that
%\begin{equation}\label{tildephibis}
%\tilde\phi(x)=\begin{cases}\zeta_1+r+C_0[(x-b_1+\tau)^+]^s&\text{for  }x\leq b_1, \\
%\zeta_2+r+C_0[(x-b_2-\tau)^-]^s&\text{for  }x\geq b_2
%\end{cases}
%\end{equation} and 
%\begin{equation}\label{tildepsibis}
%\tilde\psi(x)=\begin{cases}\zeta_1-r-C_0[(x-b_1+\tau)^+]^s&\text{for }x\leq b_1, \\
%\zeta_2-r-C_0[(x-b_2-\tau)^-]^s&\text{for }x\geq b_2.
%\end{cases}
%\end{equation} 
It is known that solutions to~\eqref{phi} and~\eqref{psi}
with~$\eta=0$ grow like~$d^s(x)$ plus the boundary data
away from the boundary of~$(b_1-\tau,b_2+\tau)$,
where~$d(x)$ is the distance function to the boundary 
of~$(b_1-\tau,b_2+\tau)$,  see~\cite{rosotonserra}. 
Thus, by stability of viscosity solutions, there exist~$c$, $C>0$
such that, for~$\tau$ small enough, {\footnotesize{
$$\begin{cases}
c (x-b_1+\tau)^s+o_\eta(1)\le \phi(x)-\zeta_1-r\leq
C (x-b_1+\tau)^s+o_\eta(1)&\text{for }x\in[b_1-\tau,b_1],\\
c (b_2+\tau-x)^s+o_\eta(1)\le \phi(x)-\zeta_2-r\leq
C (b_2+\tau-x)^s+o_\eta(1)&\text{for }x\in[b_2,b_2+\tau],\\
-C(x-b_1+\tau)^s+o_\eta(1)\le \psi(x)-\zeta_1+r\leq
-c(x-b_1+\tau)^s+o_\eta(1)&\text{for }x\in[b_1-\tau,b_1],\\
-C(b_2+\tau-x)^s+o_\eta(1)\le \psi(x)-\zeta_2+r\leq -c (b_2+\tau-x)^s+o_\eta(1)&\text{for }x\in[b_2,b_2+\tau],
\end{cases}
$$}}
\noindent where $o_\eta(1)\to 0$ as~$\eta\searrow0$. In particular,
for small~$\tau$,
\begin{equation}\label{etaC}
\begin{cases}
|\phi(x)-\zeta_1-r|\leq \displaystyle\frac r4&\text{for }
x\in[b_1-\tau,b_1],\\
\\
|\phi(x)-\zeta_2-r|\leq \displaystyle
\frac r4&\text{for }x\in[b_2,b_2+\tau],\\
\\
| \psi(x)-\zeta_1+r|\le \displaystyle
\frac r4&\text{for }x\in[b_1-\tau,b_1],\\
\\
|\psi(x)-\zeta_2+r|\le\displaystyle\frac r4&\text{for }x\in[b_2,b_2+\tau].
\end{cases}
\end{equation}
Next, consider  smooth functions~$\Phi:\R\to\R$ and~$\Psi:\R\to\R$  such that
\begin{equation}\label{Phidef}\begin{cases}
\Phi(x)=\phi(x)&\text{for }x\in (-\infty,b_1-2\tau]
\cup[b_2+2\tau,+\infty),\\
\zeta_1+\displaystyle\frac34r\leq\Phi(x)\leq \phi(x)\leq
\zeta_1+\displaystyle\frac54 r&\text{for }x\in(b_1-2\tau,b_1],\\
\Phi(x)\geq \phi(x)&\text{for }x\in(b_1,b_2),\\
\zeta_2+\displaystyle
\frac34r\leq\Phi(x)\leq \phi(x)\le \zeta_2+ \displaystyle
\frac54 r &\text{for }[b_2,b_2+2\tau)
\end{cases}
\end{equation}
and 
\begin{equation}\label{Psidef}\begin{cases}
\Psi(x)=\psi(x)&\text{for all }x\in (-\infty,b_1-2\tau]\cup[b_2+2\tau,+\infty),\\
 \zeta_1-\displaystyle
\frac54 r\le\psi(x)\le \Psi(x)\leq\zeta_1-\displaystyle
\frac34r&\text{for all }x\in(b_1-2\tau,b_1],\\
 \Psi(x)\leq \psi(x)&\text{for all }x\in(b_1,b_2),\\
\zeta_2-\displaystyle
\frac54r \le \psi(x)\le \Psi(x)\leq\zeta_2-\displaystyle
\frac34r&\text{for all }[b_2,b_2+2\tau).
\end{cases}
\end{equation}
With this, we can define the set
\begin{equation}\begin{split}
\label{Gamma} \Gamma(b_1,b_2):=\;&
\Big\{ Q:\R\to\R {\mbox{ s.t. }}
Q-Q_{\zeta_1,\zeta_2}^\sharp\in H^1(\R), \\&
\Psi(x)\le Q(x)\le \Phi(x) {\mbox{ for all }}
x\in (-\infty,b_1]\cup[b_2,+\infty)
\Big\}.\end{split}\end{equation}
Given~$\eta$, ${\mu}\in(0,1]$, we also consider the energy functional
\begin{equation} \label{01edkIETA}\begin{split}
I_{\eta,{\mu}}(Q)\,&:=
\frac\eta2 \int_{\R} |\dot Q(x)|^2\,dx+{\frac\mu2\int_\R
\big|Q(x)-Q_{\zeta_1,\zeta_2}^\sharp(x)\big|^2\,dx}
+\int_\R a(x)W\big(Q(x)\big)\,dx
\\ &+\frac14
\iint_{\R\times \R} \Big( \big| Q(x)-Q(y)\big|^2
-\big| Q_{\zeta_1,\zeta_2}^\sharp(x)-Q_{\zeta_1,\zeta_2}^\sharp(y)\big|^2\Big)
\,K(x-y)\,dx\,dy.\end{split}
\end{equation}
Then, we can construct a constrained minimizer for~$I_{\eta,{\mu}}$ in~$\Gamma(b_1,b_2)$
(later on, in Proposition~\ref{FREE-eta}, we will take~$b_1$ and~$b_2$ conveniently
separated, in order to employ condition~\eqref{a nondegenerate},
so
to obtain an unconstrained minimizer, and then, in 
Section~\ref{S023fhYU0193933}, we will send~$\eta\searrow0$
for a fixed~$\mu>0$.
Finally, in Section~\ref{UPAlao}, we will send~$\mu\searrow0$,
in order to obtain a solution of our original equation, as claimed in Theorem~\ref{MAIN}).

\begin{lemma}\label{78:77:76}
There exists~$Q_{\eta,{\mu}}\in\Gamma(b_1,b_2)$ such that
\begin{equation}
\label{LE:X1} I_{\eta,{\mu}}(Q_{\eta,{\mu}})
\le I_{\eta,{\mu}}(Q) {\mbox{ for all }} Q\in\Gamma(b_1,b_2).\end{equation}
Also, setting~$v_{\eta,{\mu}}:=Q_{\eta,{\mu}}-Q_{\zeta_1,\zeta_2}^\sharp$, we have that
\begin{eqnarray}
\label{LE:X2} && [v_{\eta,{\mu}}]_{H^1(\R)}\le\frac{\kappa}{\sqrt{\eta\mu}},\\
\label{LE:X3} && [v_{\eta,{\mu}}]_{K,\R\times\R}\le\frac{\kappa}{\sqrt{\mu}},\\
\label{LE:X4} && \|v_{\eta,{\mu}}\|_{L^\infty(\R)}\le{\kappa},\\
\label{LE:X5} &&
\|v_{\eta,{\mu}}\|_{L^2(\R)}\le\frac\kappa{\mu},\\
\label{PURT}&& [v_{\eta,{\mu}}]_{C^{0,\frac12}(\R)}\le\frac{\kappa}{\sqrt{\eta\mu}},\\
\label{DAFS823435rt10}&&
\min\{\zeta_1,\zeta_2\}\le Q_{\eta,\mu}(x)\le\max\{\zeta_1,\zeta_2\}\qquad
{\mbox{ for all }}x\in\R\\
\label{LE:X6} {\mbox{and }} &&
E_{\R^2}(Q_{\eta,{\mu}})\geq -\frac\kappa{\mu^2},
\end{eqnarray}
for some~$\kappa>0$, which
possibly depends on~$Q_{\zeta_1,\zeta_2}^\sharp$
and on structural constants.
\end{lemma}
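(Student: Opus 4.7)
The plan is to apply the direct method of the calculus of variations. First, I would observe that $\Gamma(b_1,b_2)$ is non-empty: by~\eqref{Phidef}--\eqref{Psidef}, on the constraint region $(-\infty,b_1]\cup[b_2,+\infty)$ one has $\Psi\le\zeta_i\le\Phi$, so that $Q_{\zeta_1,\zeta_2}^\sharp\in\Gamma(b_1,b_2)$ and
\begin{equation*}
I_{\eta,\mu}(Q_{\zeta_1,\zeta_2}^\sharp)=\frac{\eta}{2}\int_\R|\dot Q_{\zeta_1,\zeta_2}^\sharp|^2\,dx+\int_\R a(x)\,W(Q_{\zeta_1,\zeta_2}^\sharp(x))\,dx\le\kappa_0
\end{equation*}
uniformly in $\eta,\mu\in(0,1]$, since $Q_{\zeta_1,\zeta_2}^\sharp$ transits between equilibria inside a fixed compact set. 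Setting $v:=Q-Q_{\zeta_1,\zeta_2}^\sharp\in H^1(\R)$, the algebraic identity
\begin{equation*}
|Q(x)-Q(y)|^2-|Q_{\zeta_1,\zeta_2}^\sharp(x)-Q_{\zeta_1,\zeta_2}^\sharp(y)|^2=|v(x)-v(y)|^2+2\bigl(v(x)-v(y)\bigr)\bigl(Q_{\zeta_1,\zeta_2}^\sharp(x)-Q_{\zeta_1,\zeta_2}^\sharp(y)\bigr)
\end{equation*}
combined with Lemma~\ref{LEMMA1} and the nonnegativity of $aW$ yields the coercivity estimate
\begin{equation*}
I_{\eta,\mu}(Q)\ge\frac{\eta}{2}[v]_{H^1(\R)}^2+\frac{\mu}{2}\|v\|_{L^2(\R)}^2+\frac14[v]_{K,\R\times\R}^2-\kappa_1\bigl([v]_{K,\R\times\R}+\|v\|_{L^2(\R)}\bigr),
\end{equation*}
and two Young absorptions (into $\tfrac18[v]_{K,\R\times\R}^2$ and $\tfrac\mu4\|v\|_{L^2(\R)}^2$) produce a finite lower bound on $I_{\eta,\mu}$ depending only on $\mu$ and structural constants.

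A minimizing sequence $(Q_k)$ then has $(v_k)$ bounded in $H^1(\R)$, so, up to a subsequence, $v_k\rightharpoonup v_\infty$ weakly in $H^1(\R)$ and pointwise a.e.\ in $\R$. The obstacle constraints pass to pointwise a.e.\ limits, the local and $L^2$-penalization terms are lower semicontinuous, and $\int aW(Q_k)\,dx$ is l.s.c.\ by Fatou since $aW\ge 0$. For the renormalized nonlocal part, the quadratic piece $\tfrac14[v_k]_{K,\R\times\R}^2$ is l.s.c.\ by Fatou on the nonnegative integrand, while the linear cross term is \emph{weakly continuous}: symmetrizing gives $\tfrac12\B_{\R,\R}(v_k,Q_{\zeta_1,\zeta_2}^\sharp)=\int_\R v_k\,\mathcal{L}Q_{\zeta_1,\zeta_2}^\sharp\,dx$, and because $|\mathcal{L}Q_{\zeta_1,\zeta_2}^\sharp(x)|\le C(1+|x|)^{-2s}$ with $4s>1$ from~\eqref{RANGE}, one has $\mathcal{L}Q_{\zeta_1,\zeta_2}^\sharp\in L^2(\R)$, against which weak $L^2$ convergence of $v_k$ suffices. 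The limit $Q_{\eta,\mu}:=v_\infty+Q_{\zeta_1,\zeta_2}^\sharp$ realizes the minimum, which is~\eqref{LE:X1}. Inserting $I_{\eta,\mu}(Q_{\eta,\mu})\le\kappa_0$ in the coercivity chain above yields in one stroke
\begin{equation*}
\eta\,[v_{\eta,\mu}]_{H^1(\R)}^2+\mu\,\|v_{\eta,\mu}\|_{L^2(\R)}^2+[v_{\eta,\mu}]_{K,\R\times\R}^2\le\frac{\kappa}{\mu},
\end{equation*}
which is~\eqref{LE:X2}, \eqref{LE:X3} and~\eqref{LE:X5}; the one-dimensional embedding $[v]_{C^{0,1/2}(\R)}\le[v]_{H^1(\R)}$ then gives~\eqref{PURT}.

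For~\eqref{DAFS823435rt10} and~\eqref{LE:X4}, I would run a truncation argument: writing $m:=\min\{\zeta_1,\zeta_2\}$, $M:=\max\{\zeta_1,\zeta_2\}$, the truncation $T(Q):=\max\{\min\{Q,M\},m\}$ still lies in $\Gamma(b_1,b_2)$ (any case analysis on $Q\lessgtr m,M$ together with the constraint $\Psi\le Q\le\Phi$ and the inclusions $[m,M]\supset\{\zeta_1,\zeta_2\}\supset\{\Psi,\Phi\}$-values on the constraint region force $\Psi\le T(Q)\le\Phi$) and it does not increase any term of $I_{\eta,\mu}$: the local and nonlocal seminorms are nonincreasing under monotone truncation to an interval, the $L^2$ penalization decreases because $Q_{\zeta_1,\zeta_2}^\sharp\in[m,M]$, and $aW(T(Q))\le aW(Q)$ since $W\ge 0$ with $W(m)=W(M)=0$. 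Hence $T(Q_{\eta,\mu})$ is a minimizer too, giving~\eqref{DAFS823435rt10} and~\eqref{LE:X4}. Finally,~\eqref{LE:X6} follows from the identity $E_{\R^2}(Q_{\eta,\mu})=[v_{\eta,\mu}]_{K,\R\times\R}^2+2\B_{\R,\R}(v_{\eta,\mu},Q_{\zeta_1,\zeta_2}^\sharp)$ combined with Lemma~\ref{LEMMA1} and the bounds~\eqref{LE:X3} and~\eqref{LE:X5}. The main obstacle throughout is the cross term $\B_{\R,\R}(v,Q_{\zeta_1,\zeta_2}^\sharp)$: a naive Cauchy--Schwarz would require $[Q_{\zeta_1,\zeta_2}^\sharp]_{K,\R\times\R}<\infty$, which \emph{fails} in the range~\eqref{RANGE}; Lemma~\ref{LEMMA1}, with its far-field estimate $\int|v(x)|(|x|+2)^{-2s}\,dx\le C\|v\|_{L^2}$ that uses precisely $4s>1$, is what simultaneously provides the coercivity and the weak continuity on which the whole scheme rests.
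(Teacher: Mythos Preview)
Your argument is essentially the paper's: direct method, Lemma~\ref{LEMMA1} for coercivity, truncation to~$[\min\{\zeta_1,\zeta_2\},\max\{\zeta_1,\zeta_2\}]$, and the identity $E_{\R^2}(Q)=[v]_{K,\R\times\R}^2+2\B_{\R,\R}(v,Q^\sharp)$ for~\eqref{LE:X6}. Two small points. First, your displayed coercivity inequality drops the local cross term: since $\dot Q=\dot Q^\sharp+\dot v$, there is an extra $\eta\int\dot Q^\sharp\dot v$ that must be absorbed by Young into $\tfrac{\eta}{4}[v]_{H^1}^2$ (at the cost of an additive $-\eta\|\dot Q^\sharp\|_{L^2}^2$); the paper handles this explicitly. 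Second, the tactical differences are harmless but worth naming: the paper truncates the \emph{minimizing sequence} before passing to the limit (so~\eqref{DAFS823435rt10} and~\eqref{LE:X4} come for free along the sequence), whereas you truncate the limit; and for lower semicontinuity of the cross term the paper simply takes weak limits in the Hilbert space induced by~$[\cdot]_{K,\R\times\R}$, while your route via $\tfrac12\B_{\R,\R}(v,Q^\sharp)=\int v\,\mathcal{L}Q^\sharp$ with $\mathcal{L}Q^\sharp\in L^2(\R)$ (using $4s>1$) is a clean alternative that makes the role of~\eqref{RANGE} very transparent.
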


\begin{proof} We take a minimizing sequence~$Q_j\in\Gamma(b_1,b_2)$
for the functional~$I_{\eta,{\mu}}$, and we let~$v_j:=Q_j-Q_{\zeta_1,\zeta_2}^\sharp\in H^1(\R)$.
Defining
$$Q^\star_j(x):=\begin{cases}
Q_j(x) & {\mbox{ if }} Q_j(x)\in \big(\min\{\zeta_1,\zeta_2\},\max\{\zeta_1,\zeta_2\}\big),\\
\min\{\zeta_1,\zeta_2\} & {\mbox{ if }} Q_j(x)\le\min\{\zeta_1,\zeta_2\},\\
\max\{\zeta_1,\zeta_2\} & {\mbox{ if }} Q_j(x)\ge\max\{\zeta_1,\zeta_2\},
\end{cases}$$
we see by direct inspection that
\begin{equation}\label{GasbdflergriAFgshdlflll12}
|Q^\star_j(x)-Q^\star_j(y)|\le
|Q_j(x)-Q_j(y)|\qquad{\mbox{and}}\qquad|\dot Q^\star_j(x)|\le |\dot Q_j(x)|.
\end{equation}
Also, we point out that~$W(Q^\star_j(x))=0\le W(Q(x))$ for every~$x\in\{Q^\star_j\ne Q_j\}$
and, as a result,
\begin{equation} \label{GasbdflergriAFgshdlflll13}
W(Q^\star_j(x))\leq W( Q_j(x)).
\end{equation}
Moreover, in light of~\eqref{TRAQQ}, if~$Q_j(x)\le\min\{\zeta_1,\zeta_2\}$ then
\begin{eqnarray*}
&& \big|Q^\star_j(x)-Q_{\zeta_1,\zeta_2}^\sharp(x)\big|=
\big|\min\{\zeta_1,\zeta_2\}-Q_{\zeta_1,\zeta_2}^\sharp(x)\big|
=Q_{\zeta_1,\zeta_2}^\sharp(x)-
\min\{\zeta_1,\zeta_2\}\\&&\qquad\le Q_{\zeta_1,\zeta_2}^\sharp(x)-Q_j(x)\le
\big|Q_j(x)-Q_{\zeta_1,\zeta_2}^\sharp(x)\big|,\end{eqnarray*}
and a similar estimate holds if~$Q_j(x)\ge\max\{\zeta_1,\zeta_2\}$.

This gives that~$\big|Q^\star_j(x)-Q_{\zeta_1,\zeta_2}^\sharp(x)\big|\le
\big|Q_j(x)-Q_{\zeta_1,\zeta_2}^\sharp(x)\big|$ for all~$x\in\R$. Consequently,
by~\eqref{GasbdflergriAFgshdlflll12} and~\eqref{GasbdflergriAFgshdlflll13},
we see that~$I_{\eta,\mu}(Q^\star_j)\le I_{\eta,\mu}(Q_j)$.

For that reason, from now on, possibly replacing~$Q_j$ with~$Q^\star_j$,
we can suppose that
\begin{equation}\label{DAFS823435rt11}
\min\{\zeta_1,\zeta_2\}\le Q_j(x)\le\max\{\zeta_1,\zeta_2\}\qquad
{\mbox{ for all }}x\in\R,
\end{equation}
and therefore
\begin{equation}\label{2397777x66x02kdkjroo3455o45}
|v_j(x)|\le\kappa\qquad
{\mbox{ for all }}x\in\R,
\end{equation}
for some~$\kappa>0$.

We also define~$J_{\eta,\mu}(v):=I_{\eta,\mu}(Q_{\zeta_1,\zeta_2}^\sharp+v)$.
In this way, the sequence~$v_j$ is minimizing for~$J_{\eta,\mu}$, and
\begin{equation}\label{0-102o3:2}\begin{split}
J_{\eta,\mu}(v)\,&=\frac\eta2 \int_{\R} \big(|\dot Q_{\zeta_1,\zeta_2}^\sharp(x)|^2
+|\dot v(x)|^2+2\dot Q_{\zeta_1,\zeta_2}^\sharp(x) \dot v(x)\big)
\,dx+\frac\mu2\int_{\R}|v(x)|^2\,dx\\&\qquad+\int_\R a(x)W\big(Q_{\zeta_1,\zeta_2}^\sharp(x)+v(x)\big)\,dx
\\ &\qquad+\frac14
\iint_{\R\times \R} \big| v(x)-v(y)\big|^2\,K(x-y)\,dx\,dy+\frac12\B_{\R,\R}(v,Q_{\zeta_1,\zeta_2}^\sharp).
\end{split}\end{equation}
Since~$v_j$ is minimizing and the zero function is an admissible
competitor for~$J_{\eta,\mu}$, we can also suppose that
\begin{equation}\label{0-102o3:3}
J_{\eta,\mu}(v_j)\le J_{\eta,\mu}(0)+1\le
\frac12 \int_{\R}|\dot Q_{\zeta_1,\zeta_2}^\sharp(x)|^2
\,dx+\int_\R\overline a\, W\big(Q_{\zeta_1,\zeta_2}^\sharp(x)\big)\,dx+1
\le\kappa.
\end{equation}
In addition, by Cauchy-Schwarz inequality,
$$ 2\,\big|
\dot Q_{\zeta_1,\zeta_2}^\sharp(x)\cdot \dot v_j(x)\big|\le
4\,\big|\dot Q_{\zeta_1,\zeta_2}^\sharp(x)\big|^2+
\frac14\,\big|\dot v_j(x)\big|^2.$$
Combining this estimate with formulas~\eqref{0-102o3:2} and~\eqref{0-102o3:3},
we conclude that
\begin{equation}\label{quasnd518}\begin{split}&
\frac{3\eta}8 \int_{\R} |\dot v_j(x)|^2\,dx+\frac\mu2\int_{\R}|v_j(x)|^2\,dx
+\frac14
\iint_{\R\times \R} \big| v_j(x)-v_j(y)\big|^2\,K(x-y)\,dx\,dy\\&\qquad\qquad+\frac12\B_{\R,\R}(v_j,Q_{\zeta_1,\zeta_2}^\sharp)
\le\kappa,\end{split}\end{equation}
up to the freedom of renaming~$\kappa$. 

Furthermore, recalling Lemma~\ref{LEMMA1}, fixing a small
additional parameter~$\e>0$, and using the Cauchy-Schwarz inequality,
\begin{equation}\label{0iASyye734}
\big|\B_{\R,\R}(v_j,Q_{\zeta_1,\zeta_2}^\sharp) \big| \le \kappa\,\Big(
[v_j]_{K,\R\times\R}+\|v_j\|_{L^2(\R)}\Big)\le \kappa\,\left(
\e\,[v_j]_{K,\R\times\R}^2+\e\,\|v_j\|^2_{L^2(\R)}+\frac1\e\right).
\end{equation}
Hence, in view of~\eqref{quasnd518} and choosing~$\e$ conveniently
small (possibly in dependence of~$\mu$),
\begin{equation}\label{093u9876543erer-23i0}
\frac{3\eta}8 [ v_j]_{H^1(\R)}^2\,dx+\frac\mu4 \|v_j\|^2_{L^2(\R)}
+\frac18[v_j]_{K,\R\times\R}^2
\le\frac\kappa\mu.\end{equation}
Accordingly,
we obtain that, up to a subsequence, $v_j$ converges locally uniformly in~$\R$ and weakly
in~$H^1(\R)$ and in the Hilbert space induced by~$[\cdot]_{K,\R\times \R}$ to a minimizer~$v_{\eta,\mu}$.
We then set~$Q_{\eta,\mu}:=v_{\eta,\mu}+Q_{\zeta_1,\zeta_2}^\sharp$ and we obtain~\eqref{LE:X1}.

Also, the claims in~\eqref{LE:X2}, \eqref{LE:X3} and~\eqref{LE:X5} 
follow by taking the limit in~\eqref{093u9876543erer-23i0}, as well as
the claim in~\eqref{LE:X4} follows by taking the limit in~\eqref{2397777x66x02kdkjroo3455o45}.

Moreover, the claim in~\eqref{PURT} follows from~\eqref{LE:X2}
and the one in~\eqref{DAFS823435rt10} is a consequence of~\eqref{DAFS823435rt11}.
Finally, to prove~\eqref{LE:X6} we observe that, in view of~\eqref{Eenergy}, \eqref{BIJJI}
and~\eqref{0iASyye734},
$$ E_{\R^2}(Q_j)=[v_j]^2_{K,\R\times\R}+2\B_{\R,\R}(v_j,Q_{\zeta_1,\zeta_2}^\sharp)\ge
\frac12[v_j]^2_{K,\R\times\R}-\frac12\|v_j\|^2_{L^2(\R)}-\kappa.
$$
Passing to the limit and making use of~\eqref{LE:X3}
and~\eqref{LE:X5} we obtain~\eqref{LE:X6}.
\end{proof}

Now we define
$$ J_*:= (b_{1},b_{2})$$
and
$$
L :=\big\{ x\in (-\infty,b_1]\cup [b_{2},+\infty) {\mbox{ s.t. }} \Psi(x)<Q_{\eta,{\mu}}(x)< \Phi(x)\big\}.
$$
Let also
\begin{equation}\label{Freeset}F:= J_*\cup L.\end{equation}
As usual, by taking inner variations, one sees that
in the set~$F$ the minimization problem is ``free''
and so it satisfies an Euler-Lagrange equation, as stated
explicitly in the next result:

\begin{lemma}\label{HJA:AA:2}
Let~$Q_{\eta,{\mu}}$ be as in Lemma~\ref{78:77:76}.
For any~$x\in F$, we have that
\begin{equation}\label{HJA:AA:2:EQ}
-\eta\,\ddot{Q}_{\eta,{\mu}}(x)+
{\mu\big(Q_{\eta,\mu}(x)-Q_{\zeta_1,\zeta_2}^\sharp(x)\big)}
+{\mathcal{L}}Q_{\eta,{\mu}}(x) + a(x)\, W'(Q_{\eta,{\mu}}(x)) =0.\end{equation} 
\end{lemma}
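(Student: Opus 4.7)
The plan is to derive the Euler--Lagrange equation via standard inner variations, exploiting that on the open set $F$ the obstacle constraints defining $\Gamma(b_1,b_2)$ are inactive.

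I first observe that $F$ is open: $J_* = (b_1, b_2)$ is open by definition, and the continuity of the smooth functions $\Psi$, $\Phi$ together with the H\"older regularity of $Q_{\eta,\mu}$ from~\eqref{PURT} ensures that $L$ is open as well. Given any test function $\varphi \in C_c^\infty(F)$, I check that $Q_{\eta,\mu} + t\varphi \in \Gamma(b_1,b_2)$ for all sufficiently small $|t|$: on $\mathrm{supp}\,\varphi \cap J_*$ no constraint needs to be checked, while on the compact set $\mathrm{supp}\,\varphi \cap L$ the continuous quantities $\Phi - Q_{\eta,\mu}$ and $Q_{\eta,\mu} - \Psi$ are strictly positive, hence bounded below by some $\delta > 0$, so any $|t| \le \delta/(1 + \|\varphi\|_{L^\infty})$ preserves the strict inequalities. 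The admissibility condition $(Q_{\eta,\mu} + t\varphi) - Q^\sharp_{\zeta_1,\zeta_2} \in H^1(\R)$ follows from~\eqref{LE:X2} and the smoothness of $\varphi$.

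By the minimality~\eqref{LE:X1}, the map $t \mapsto I_{\eta,\mu}(Q_{\eta,\mu} + t\varphi)$ has a minimum at $t = 0$, so its derivative there vanishes. Differentiating~\eqref{01edkIETA} term by term produces a weak identity of the form
\begin{equation*}
\eta \int_\R \dot Q_{\eta,\mu}\, \dot\varphi\,dx + \mu \int_\R (Q_{\eta,\mu} - Q^\sharp_{\zeta_1,\zeta_2}) \varphi\,dx + \int_\R a\, W'(Q_{\eta,\mu})\, \varphi\,dx + \mathcal{A}(\varphi) = 0,
\end{equation*}
where $\mathcal{A}(\varphi)$ is the bilinear quantity $\tfrac12 \iint_{\R\times\R} (Q_{\eta,\mu}(x) - Q_{\eta,\mu}(y))(\varphi(x)-\varphi(y)) K(x-y)\,dx\,dy$. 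The evenness of $K$, exploited as in~\eqref{BIJJI}, symmetrizes $\mathcal{A}(\varphi)$ into $\iint (Q_{\eta,\mu}(x) - Q_{\eta,\mu}(y))\varphi(x) K(x-y)\,dx\,dy$, which via the principal value convention of~\eqref{LOPERATO} and Fubini equals $\int_\R \varphi(x)\, \mathcal{L} Q_{\eta,\mu}(x)\,dx$. Integrating by parts the Dirichlet term (legitimate thanks to~\eqref{LE:X2}) and invoking the arbitrariness of $\varphi \in C_c^\infty(F)$ yields the desired identity~\eqref{HJA:AA:2:EQ}.

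The main delicate point will be the rigorous passage from the double integral $\mathcal{A}(\varphi)$ to the pointwise operator $\mathcal{L} Q_{\eta,\mu}$: the H\"older exponent $\alpha < 2s$ provided by Lemma~\ref{IL NUOVO}, combined with the uniform bound~\eqref{LE:X4}, falls just short of giving absolute integrability near the diagonal $x = y$, so the principal value must be handled using the cancellation afforded by $K(-z) = K(z)$ under the change $y \mapsto 2x - y$. After this symmetrization the integrand is controlled in terms of $|x-y|^{2\alpha - 1 - 2s}$ with $\alpha$ arbitrarily close to $2s$, which is integrable, and the pointwise reading of~\eqref{HJA:AA:2:EQ} at every $x \in F$ follows from the weak identity together with the regularity estimates already available from Section~\ref{S:2}.
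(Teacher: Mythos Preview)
Your proposal is correct and follows exactly the approach the paper indicates: the paper does not give a proof of this lemma at all, merely remarking before the statement that ``as usual, by taking inner variations, one sees that in the set~$F$ the minimization problem is `free' and so it satisfies an Euler--Lagrange equation.'' You have supplied precisely the standard details the paper omits---openness of~$F$, admissibility of the perturbed competitor for small~$|t|$, and differentiation of~$I_{\eta,\mu}$---and your final paragraph on the distributional-to-pointwise passage is more scrupulous than anything the paper records (the paper tacitly relies on the weak formulation here and only later, via Corollary~\ref{QCalphaobstaclecor} and the reference~\cite{MR3161511}, upgrades to viscosity and uniform H\"older regularity).
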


Now we define the set
\begin{equation}\label{Sigma}
\Sigma :=\Big\{ Q:\R\to\R {\mbox{ s.t. }}
Q-Q_{\zeta_1,\zeta_2}^\sharp\in H^1(\R) \, {\mbox{ and }}\,
\Psi(x)\le Q(x)\le \Phi(x) {\mbox{ for all }} x\in \R\Big\}.
\end{equation}
We notice that, differently from the
definition of~$\Gamma(b_1,b_2)$
given in~\eqref{Gamma}, we require here that a function~$Q$
belongs to~$\Sigma$ if it satisfies~$\Psi\le Q\le \Phi$ in the whole
of~$\R$, and not only in~$(-\infty,b_1]\cup [b_2,+\infty)$. 

As a matter of fact, we prove that
the minimizer~$Q_{\eta,{\mu}}\in\Gamma(b_1,b_2)$, given
by Lemma~\ref{78:77:76},  is actually a minimizer of~$I_{\eta,{\mu}}$
in~$\Sigma$:

\begin{lemma}\label{gammagammatilde}
Let~$Q_{\eta,{\mu}}$ be as in Lemma~\ref{78:77:76}. 
Then, we have that~$Q_{\eta,{\mu}}\in \Sigma$. In particular,  
\begin{equation}\label{dgeui:1}
\inf_{Q\in\Sigma} I_{\eta,{\mu}}(Q)=
\inf_{Q\in \Gamma(b_1,b_2)} I_{\eta,{\mu}}(Q)= I_{\eta,{\mu}}(Q_{\eta,{\mu}}).
\end{equation}
\end{lemma}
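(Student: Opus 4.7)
The plan is to upgrade the bound $\Psi \le Q_{\eta,\mu} \le \Phi$, which is enforced by $Q_{\eta,\mu} \in \Gamma(b_1,b_2)$ only on $(-\infty, b_1] \cup [b_2,+\infty)$, to the interior piece $J_* = (b_1, b_2)$. I focus on the upper bound $Q_{\eta,\mu} \le \Phi$ on $J_*$; the lower bound follows by the same argument with $\psi$ in place of $\phi$. The clean observation is that it is better to compare $Q_{\eta,\mu}$ with $\phi$ (which solves the linear equation \eqref{phi}) rather than with the cutoff $\Phi$. Indeed, \eqref{Phidef} gives $\Phi \le \phi$ on $(-\infty, b_1] \cup [b_2, +\infty)$ and $\Phi \ge \phi$ on $J_*$, so by the $\Gamma$-constraint, $w := Q_{\eta,\mu} - \phi$ is non-positive outside $J_*$, meaning $w^+$ is supported in $\overline{J_*}$; proving $w \le 0$ on $J_*$ will then yield $Q_{\eta,\mu} \le \phi \le \Phi$ on the whole of $\R$.

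To carry this out, on $J_* \subset F$ the minimizer satisfies the Euler--Lagrange equation of Lemma \ref{HJA:AA:2}, and subtracting \eqref{phi} produces, for $w = Q_{\eta,\mu} - \phi$,
\[
-\eta \ddot w + \mathcal{L} w = -C_0 - \mu\bigl(Q_{\eta,\mu} - Q_{\zeta_1,\zeta_2}^\sharp\bigr) - a(x) W'\bigl(Q_{\eta,\mu}(x)\bigr) \quad \text{on } J_*,
\]
understood in weak form. The bound \eqref{DAFS823435rt10} forces $|Q_{\eta,\mu} - Q_{\zeta_1,\zeta_2}^\sharp| \le |\zeta_1|+|\zeta_2|$, and the choice of $C_0$ in \eqref{decizer} then renders the right-hand side uniformly bounded above by $-(|\zeta_1|+|\zeta_2|+1) \le -1$ on $J_*$. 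Testing against $w^+$, a valid $H^1$ test function supported in $\overline{J_*}$, produces a non-negative local contribution $\eta \int |(w^+)'|^2\,dx$ together with a non-negative nonlocal contribution
\[
\tfrac12 \iint_{\R\times\R}\bigl(w(x)-w(y)\bigr)\bigl(w^+(x)-w^+(y)\bigr)K(x-y)\,dx\,dy \;\ge\; \tfrac12 [w^+]_{K,\R\times\R}^2 \;\ge\; 0,
\]
via the elementary pointwise inequality $(a-b)(a^+-b^+) \ge (a^+-b^+)^2$ and the evenness of $K$. Balanced against a strictly negative right-hand side, this forces $w^+ \equiv 0$, so $Q_{\eta,\mu} \le \phi \le \Phi$ on $J_*$. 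The symmetric argument comparing with $\psi$ via $(Q_{\eta,\mu}-\psi)^-$ delivers $Q_{\eta,\mu} \ge \psi \ge \Psi$ on $J_*$, and together these show $Q_{\eta,\mu} \in \Sigma$.

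The identity \eqref{dgeui:1} is then immediate: since $\Sigma \subset \Gamma(b_1, b_2)$, one has $\inf_\Sigma I_{\eta,\mu} \ge \inf_{\Gamma(b_1,b_2)} I_{\eta,\mu} = I_{\eta,\mu}(Q_{\eta,\mu})$, while the reverse inequality holds because $Q_{\eta,\mu} \in \Sigma$. The main technical point to be careful with is to justify the test-function manipulation rigorously, since $Q_{\eta,\mu}$ is only H\"older continuous by \eqref{PURT} and the Euler--Lagrange equation is available in weak form. However, by \eqref{LE:X2}--\eqref{LE:X3}, $Q_{\eta,\mu}-Q_{\zeta_1,\zeta_2}^\sharp$ sits in both $H^1(\R)$ and the Hilbert space induced by $[\cdot]_{K,\R\times\R}$, and $\phi$ inherits matching regularity from the linear elliptic theory invoked before \eqref{etaC}, so $w^+$ is admissible and both weak formulations can be subtracted as claimed.
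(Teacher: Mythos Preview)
Your proof is correct and follows essentially the same strategy as the paper: compare $Q_{\eta,\mu}$ with the linear solution $\phi$ (rather than the cutoff $\Phi$), use the Euler--Lagrange equation on $J_*$ together with \eqref{phi} and the choice of $C_0$ in \eqref{decizer} to force a strict sign on the forcing term, and then invoke a maximum principle; the inequality $\phi\le\Phi$ on $J_*$ from \eqref{Phidef} closes the argument, and the identity \eqref{dgeui:1} follows exactly as you wrote. The only difference is in the execution of the maximum principle step: the paper argues pointwise at an interior maximum of $w=Q_{\eta,\mu}-\phi$ (where $-\eta\ddot w+\mathcal{L}w\ge0$ leads directly to a contradiction with \eqref{decizer}), whereas you run the variational version by testing against $w^+$ and using $(a-b)(a^+-b^+)\ge(a^+-b^+)^2$. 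Both are standard and equivalent here; your version has the mild advantage of sidestepping any question about pointwise second derivatives of $Q_{\eta,\mu}$.
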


\begin{proof}
We first prove that~$Q_{\eta,{\mu}}$ belongs to~$\Sigma$. For this,
it is enough to show that
\begin{equation}\label{dgeui:0}
\Psi(x) \le Q_{\eta,{\mu}}(x)\le\Phi(x)
\quad {\mbox{for any }} x\in (b_1,b_2).
\end{equation}
To check this, we observe that, by Lemma~\ref{HJA:AA:2}, $Q_{\eta,{\mu}}$ is solution of 
$$-\eta\,\ddot{Q}_{\eta,{\mu}}(x)
+{\mu\big(Q_{\eta,\mu}(x)-Q_{\zeta_1,\zeta_2}^\sharp(x)\big)}
+{\mathcal{L}}Q_{\eta,{\mu}}(x) + a(x)\,
W'(Q_{\eta,{\mu}}(x)) =0$$
for any $x\in (b_1,b_2)$.

In addition, since $Q_{\eta,{\mu}}\in\Gamma(b_1,b_2)$, recalling~\eqref{Phidef} and~\eqref{Gamma}, 
we see that
\begin{equation}\label{dgeui}
Q_{\eta,{\mu}}(x)\leq \Phi(x)\leq\phi(x)\quad\text{for any }x
\in (-\infty, b_1]\cup [b_2,+\infty).\end{equation}  
We observe that
\begin{equation} \label{TPDAs12}
Q_{\eta,{\mu}}(x)\le \phi(x) \quad {\mbox{for any }} x\in (b_1,b_2).  
\end{equation}
To prove this, we define~$w:=Q_{\eta,{\mu}}-\phi$
and we suppose, by contradiction, in light of~\eqref{dgeui},
that~$w$ has a positive maximum
at some point~$x_\star\in(b_1,b_2)$. This gives that
$$ 0\le-\eta \ddot{w}(x_\star)+{\mathcal{L}} w(x_\star)=
-\eta \ddot{Q}_{\eta,{\mu}}(x_\star)+{\mathcal{L}} Q_{\eta,{\mu}}(x_\star)
+\eta \ddot{\phi}(x_\star)-{\mathcal{L}} \phi(x_\star).
$$
Hence, recalling~\eqref{phi} and~\eqref{HJA:AA:2:EQ},
and using also~\eqref{DAFS823435rt10},
\begin{eqnarray*} 0&\le&
-\mu\big(Q_{\eta,\mu}(x_\star)-Q_{\zeta_1,\zeta_2}^\sharp(x_\star)\big)
-a(x_\star)\, W'(Q_{\eta,\mu}(x_\star)) -C_0\\&\le&
2|\zeta_1|+2|\zeta_2|+\|aW'\|_{L^\infty(\R)}-C_0.\end{eqnarray*}
This is in contradiction with~\eqref{decizer}
and hence it completes the proof of~\eqref{TPDAs12}.

Consequently, in view of~\eqref{TPDAs12}, and
making again use of~\eqref{Phidef},
$$ Q_{\eta,\mu}(x)\le \phi(x) \le \Phi(x) \quad {\mbox{for any }}
x\in (b_1,b_2),$$
which proves the second inequality in~\eqref{dgeui:0}.
Similarly, one can check that 
$$ Q_{\eta,\mu}(x)\geq\Psi(x) \quad\text{for any }x\in(b_1,b_2),$$
which completes the proof of~\eqref{dgeui:0}.

Now, since~$Q_{\eta,{\mu}}\in\Sigma\subset \Gamma(b_1,b_2)$,
we have that
$$ \inf_{Q\in\Sigma} I_{\eta,{\mu}}(Q)\geq\inf_{Q\in
\Gamma(b_1,b_2)} I_{\eta,{\mu}}(Q)= I_{\eta,{\mu}}(Q_{\eta,{\mu}})\ge \inf_{Q\in\Sigma} I_{\eta,{\mu}}(Q),$$
which proves~\eqref{dgeui:1}. 
The proof of Lemma~\ref{gammagammatilde} is thus
complete.
\end{proof}

\section{Lewy-Stampacchia estimates and continuity results
for a double obstacle problem}\label{sec:LW}

In this section, we prove that constrained
minimizers of the perturbed problem
are continuous, with uniform bounds.
This estimate is based on a double obstacle
problem approach.
We follow a technique introduced by Lewy and Stampacchia
in~\cite{MR0271383} and suitably
modified in~\cite{MR3090147} to deal with nonlocal problems.
In our situation, differently from the previous literature,
we need to take into account the fact that the problem
is constrained by two obstacles. Moreover,
our problem is a superposition of a local and a nonlocal
operators and we aim at estimates which are uniform
with respect to the local contribution. The result that suits
for our purposes is the following:

\begin{proposition}\label{LEVSTA}
Let~$I$ be a bounded interval and~$f\in L^\infty(I)$.
Let~$u\in\Sigma$, with $\Sigma$ defined as in~\eqref{Sigma},
and assume that
\begin{equation}\label{89eiuwfhj2ery8375209138148}
\begin{split}&
\eta\int_\R \dot u(x)\, \big(\dot u(x)-\dot v(x)\big)\,dx\\&\qquad+\frac12\,
\iint_{\R^2} \big( u(x)-u(y)\big)\big( (u-v)(x)-(u-v)(y)\big)\,K(x-y)\,dx\,dy\\&\qquad
\le \int_\R f(x)\,(u-v)(x)\,dx,
\end{split}
\end{equation}
for every~$v\in\Sigma$ with~$v=u$ in~$\R\setminus I$.
Then,
\begin{equation}\label{LW}\begin{split}
\min\left\{\inf_{x\in I}-
|\ddot \Phi(x)| +{\mathcal{L}}\Phi(x),\,\inf_{x\in I} f(x)
\right\}&
\le
-\eta\ddot u(x) +{\mathcal{L}}u(x)\\&\le
\max\left\{\sup_{x\in I}
|\ddot \Psi(x)| +{\mathcal{L}}\Psi(x),\,\sup_{x\in I} f(x)
\right\}\end{split}
\end{equation}
in the sense of distributions.
\end{proposition}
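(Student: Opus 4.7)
The plan is to adapt the classical Lewy-Stampacchia strategy of \cite{MR0271383}, extended to nonlocal operators in \cite{MR3090147}, to the present mixed local-nonlocal double obstacle setting. By the evident symmetry of the problem (swap $\Phi\leftrightarrow\Psi$ and replace $u$ by $-u$), it suffices to prove the upper bound. Writing $\mathcal{E}(u,v):=\eta\int\dot u\,\dot v\,dx+\frac12\iint(u(x)-u(y))(v(x)-v(y))K(x-y)\,dx\,dy$ for the quadratic form in \eqref{89eiuwfhj2ery8375209138148}, and setting $M:=\max\{\sup_I(|\ddot\Psi|+\mathcal{L}\Psi),\,\sup_I f\}$, the task reduces to showing that
$$\mathcal{E}(u,\varphi)\,\le\,\int_I M\,\varphi\,dx\qquad\text{for every }\varphi\in C^\infty_c(I),\ \varphi\ge 0.$$

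The key competitor is the truncation of a downward perturbation above the lower obstacle,
$$v_\varepsilon\,:=\,\max(u-\varepsilon\varphi,\Psi),\qquad w_\varepsilon\,:=\,(\Psi-u+\varepsilon\varphi)^+,$$
so that $u-v_\varepsilon=\varepsilon\varphi-w_\varepsilon\ge 0$, $v_\varepsilon=u$ outside $\mathrm{supp}(\varphi)\subset I$, and $v_\varepsilon\le u\le\Phi$; thus $v_\varepsilon\in\Sigma$ is an admissible test in \eqref{89eiuwfhj2ery8375209138148}. Splitting $\mathcal{E}(u,w_\varepsilon)=\mathcal{E}(\Psi,w_\varepsilon)+\mathcal{E}(u-\Psi,w_\varepsilon)$, the algebraic crux is the Markov-type inequality
$$\mathcal{E}(u-\Psi,w_\varepsilon)\,\le\,\varepsilon\,\mathcal{E}(\varphi,w_\varepsilon),$$
obtained by decomposing $h:=\varepsilon\varphi-(u-\Psi)$ into $h^+=w_\varepsilon$ and $h^-$ and using $\mathcal{E}(h^+,h^-)\le 0$: the local piece $\int\dot h^+\dot h^-\,dx$ vanishes (disjoint supports), while the nonlocal integrand $(h^+(x)-h^+(y))(h^-(x)-h^-(y))K(x-y)$ is pointwise nonpositive, hence $\mathcal{E}(h,h^+)\ge\mathcal{E}(h^+,h^+)\ge 0$. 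Since $\Psi$ is smooth and $w_\varepsilon$ is compactly supported in $I$, integration by parts in the local term together with the standard symmetrization of the nonlocal one yield $\mathcal{E}(\Psi,w_\varepsilon)=\int(-\eta\ddot\Psi+\mathcal{L}\Psi)\,w_\varepsilon\,dx$. Using $\eta\in(0,1]$ (so $-\eta\ddot\Psi\le|\ddot\Psi|$), the definition of $M$, $f\le M$, and $0\le w_\varepsilon\le\varepsilon\varphi$, the variational inequality rearranges to
$$\varepsilon\,\mathcal{E}(u,\varphi)\,\le\,\varepsilon\int f\varphi\,dx+\int(M-f)w_\varepsilon\,dx+\varepsilon\,\mathcal{E}(\varphi,w_\varepsilon)\,\le\,\varepsilon\int M\varphi\,dx+\varepsilon\,\mathcal{E}(\varphi,w_\varepsilon).$$

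Dividing by $\varepsilon$ and passing to the limit $\varepsilon\searrow 0$ gives the upper bound in \eqref{LW} in the distributional sense, provided $\mathcal{E}(\varphi,w_\varepsilon)\to 0$. For this last step one observes that $w_\varepsilon$ is supported in the shrinking set $E_\varepsilon:=\{u-\Psi<\varepsilon\varphi\}$ (which collapses to the contact set $\{u=\Psi\}$) and satisfies $|w_\varepsilon|\le\varepsilon\|\varphi\|_{L^\infty}$, so Cauchy-Schwarz gives $|\mathcal{E}(\varphi,w_\varepsilon)|\le [\varphi]\cdot[w_\varepsilon]$ with both the $H^1$-norm and the $K$-seminorm of $w_\varepsilon$ tending to zero by dominated convergence (exploiting $u-\Psi\in H^1$). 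The main obstacle I foresee is making the Markov inequality $\mathcal{E}(h^+,h^-)\le 0$ and the integration by parts identity for $\mathcal{E}(\Psi,\cdot)$ fully rigorous despite the merely $H^1$-regularity of $u$ and the singular nature of $K$: this will be handled by leveraging the truncation in \eqref{KERNEL} to control tails and a standard mollification/approximation argument, after which the lower bound follows by applying the very same scheme to the upper obstacle $\Phi$ via the competitor $\min(u+\varepsilon\varphi,\Phi)$.
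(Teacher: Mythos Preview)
Your argument is correct and takes a genuinely different route from the paper. The paper does \emph{not} test directly with a truncated competitor; instead it introduces an auxiliary one-obstacle problem: it minimizes the functional
\[
I^*(v)=\frac{\eta}{2}\int_I|\dot v|^2+\frac14\iint_{Q_I}|v(x)-v(y)|^2K\,dx\,dy-M^*\!\int_I v
\]
over all $v\le u$ with $v=u$ outside $I$, calls the minimizer $z^*$, proves $z^*\in\Sigma$ (so $z^*\ge\Psi$) via the same pointwise sign observation you use, and then exploits the double minimality ($u$ minimal in $\Sigma$, $z^*$ minimal under $u$) to force $z^*=u$. The Euler inequality for $z^*$ then gives the distributional upper bound directly, with no $\varepsilon\to0$ limit. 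Your approach short-circuits the auxiliary minimizer entirely by testing with $v_\varepsilon=\max(u-\varepsilon\varphi,\Psi)$ and isolating the obstacle contribution through the Markov inequality $\mathcal{E}(h^+,h^-)\le0$; the price is the limiting step $\mathcal{E}(\varphi,w_\varepsilon)\to0$, which incidentally can be done more cheaply than via Cauchy--Schwarz on $[w_\varepsilon]$: since $\varphi$ is smooth and compactly supported, integrate by parts to write $\mathcal{E}(\varphi,w_\varepsilon)=\int(-\eta\ddot\varphi+\mathcal{L}\varphi)\,w_\varepsilon$, and then $0\le w_\varepsilon\le\varepsilon\varphi$ gives the $O(\varepsilon)$ bound immediately. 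The paper's method is closer to the original Lewy--Stampacchia scheme and to~\cite{MR3090147}; yours is more self-contained and avoids invoking existence for the auxiliary problem.
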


\begin{proof} Let
\begin{equation}\label{MSTAR}
M^*:=\max\left\{\sup_{x\in I}
|\ddot \Psi(x)| +{\mathcal{L}}\Psi(x),\,\sup_{x\in I} f(x)
\right\}
\end{equation}
and
$$ I^*(v):=
\frac\eta2\,\int_I |\dot v(x)|^2\,dx+\frac14\,
\iint_{Q_I} \big| v(x)-v(y)\big|^2\,K(x-y)\,dx\,dy
- M^*\int_I v(x)\,dx,$$
where~$Q_I:=(I\times I)\cup\big( I\times(\R\setminus I)\big)
\cup\big( (\R\setminus I)\times I\big)$.
We take~$z^*$ to be a minimizer of~$I^*$ in the class
of functions~$v:\R\to\R$ with~$v(x)\le u(x)$ for any~$x\in\R$
and~$v(x)=u(x)$ for any~$x\in\R\setminus I$.
The existence of such minimizer follows by compactness,
along the lines given in the proof of Lemma~\ref{78:77:76}.
In particular,
\begin{equation}\label{z sotto}
{\mbox{$z^*(x)\le u(x) $
for any~$x\in\R$
and~$z^*(x)=u(x)$ for any~$x\in\R\setminus I$}}.
\end{equation}
Moreover, for any~$\e\in[0,1]$ 
and any~$w:\R\to\R$ with~$w(x)\le u(x)$ for any~$x\in\R$
and~$w(x)=u(x)$ for any~$x\in\R\setminus I$,
we have that~$z_\e(x):=\e w(x)+(1-\e)z^*(x)$
is an admissible competitor for~$z^*$
and consequently~$I^*(z_\e)\ge I^*(z^*)$, which gives that
\begin{equation}\label{w test}
\begin{split}
0\,\le\;&\frac{d}{d\e}I^*(z_\e)\Big|_{\e=0}\\=\;&
\eta\int_I \dot z^*(x) \big( \dot w(x)-\dot z^*(x)\big)\,dx\\&\qquad\qquad
+\frac12\,
\iint_{Q_I} \big( z^*(x)-z^*(y)\big)\big( (w-z^*)(x)-(w-z^*)(y)\big)
\,K(x-y)\,dx\,dy\\&\qquad\qquad
- M^*\int_I (w-z^*)(x)\,dx.
\end{split}
\end{equation}
We claim that
\begin{equation}\label{ILCLAIM0}
z^*\in \Sigma.
\end{equation}
To check this, we first use~\eqref{z sotto} to observe that
\begin{equation}\label{ILCLAIM1}
z^*(x)\le u(x)\le \Phi(x).
\end{equation}
Then, we take
\begin{equation*}
w^*(x):=
\max\{z^*(x),\Psi(x) \}=z^*(x)+\big( \Psi(x)-z^*(x)\big)_+\,.
\end{equation*}
By~\eqref{z sotto}, we know that~$w^*(x)\le u(x)$ for any~$x\in\R$.
Also, if~$x\in\R\setminus I$, we have that~$w^*(x)=
\max\{u(x),\Psi(x) \}=u(x)$. Therefore, we can make use of~\eqref{w test}
with~$w:=w^*$, and so we find that
\begin{equation}\label{PEZ:1}
\begin{split}
0\;&\le
\eta\int_{I\cap\{\Psi>z^*\}} \dot z^*(x) 
\big( \dot \Psi(x)-\dot z^*(x)\big)\,dx
\\ & +\frac12\,
\iint_{Q_I} \big( z^*(x)-z^*(y)\big)\Big( 
\big( \Psi(x)-z^*(x)\big)_+
-\big( \Psi(y)-z^*(y)\big)_+\Big)
\,K(x-y)\,dx\,dy\\&
- M^*\int_I \big( \Psi(x)-z^*(x)\big)_+\,dx.
\end{split}
\end{equation}
Furthermore, on~$\partial I$ we have that~$z^*=u\ge\Psi$,
hence,
from~\eqref{MSTAR} and integrating by parts, we see that
\begin{equation}\label{PEZ:2}
\begin{split}
&\eta\int_{I\cap\{\Psi>z^*\}} \dot \Psi(x) 
\big( \dot \Psi(x)-\dot z^*(x)\big)\,dx
\\ &+\frac12\,
\iint_{Q_I} \big( \Psi(x)-\Psi(y)\big)\big( 
\big( \Psi(x)-z^*(x)\big)_+
-\big( \Psi(y)-z^*(y)\big)_+\big)
\,K(x-y)\,dx\,dy\\&
- M^*\int_I \big( \Psi(x)-z^*(x)\big)_+\,dx\\=\;
&-\eta\int_{\R} \ddot \Psi(x) 
\big( \Psi(x)- z^*(x)\big)_+\,dx
\\ &+
\iint_{\R^2} \big( \Psi(x)-\Psi(y)\big)
\big( \Psi(x)-z^*(x)\big)_+
\,K(x-y)\,dx\,dy\\&
- M^*\int_\R \big( \Psi(x)-z^*(x)\big)_+\,dx
\\=\;
&\int_{\R} \Big(-\eta\ddot \Psi(x) +{\mathcal{L}}\Psi(x)
- M^*\Big) \big( \Psi(x)-z^*(x)\big)_+\,dx\\
\le\;&0
.
\end{split}
\end{equation}
Thus, subtracting \eqref{PEZ:1}
to~\eqref{PEZ:2}, we conclude that
\begin{equation}\label{PEZ:3}
\begin{split}
0\;&\ge
\eta\int_{I} \big( \dot \Psi(x)-\dot z^*(x)\big)
\big( \dot \Psi(x)-\dot z^*(x)\big)_+\,dx
\\ &\qquad+\frac12\,
\iint_{Q_I} \Big( \big( \Psi(x)-z^*(x)\big)-\big( \Psi(y)-z^*(y)\big)\Big)\\&\times\Big( 
\big( \Psi(x)-z^*(x)\big)_+
-\big( \Psi(y)-z^*(y)\big)_+\Big)
\,K(x-y)\,dx\,dy.
\end{split}
\end{equation}
The last term in~\eqref{PEZ:3} is nonnegative (see e.g. page~1115
in~\cite{MR3090147}), therefore
we get that
\begin{equation*}
0\ge
\int_{I} 
\big( \dot \Psi(x)-\dot z^*(x)\big)_+^2\,dx.
\end{equation*}
This says that~$\Psi(x)\le z^*(x)$ for any~$x\in I$ (and so for any~$x\in\R$,
due to~\eqref{z sotto}).
This and~\eqref{ILCLAIM1} imply~\eqref{ILCLAIM0}, as desired.

Then, from~\eqref{ILCLAIM0} we deduce that
both the minimum and the maximum between~$u$ and~$z^*$ belong
to~$\Sigma$, that is
\begin{eqnarray*}&&
v^\sharp(x):=\min\{u(x),z^*(x)\}=u(x)-
\big( u(x)-z^*(x)\big)_+\in \Sigma\\
{\mbox{and }}&& w^\sharp(x):=
\max\{u(x),z^*(x) \}=z^*(x)+\big( u(x)-z^*(x)\big)_+\in \Sigma\,.
\end{eqnarray*}
In particular, we can take~$v:=v^\sharp$ in~\eqref{89eiuwfhj2ery8375209138148}
and~$w:=w^\sharp$ in~\eqref{w test}. This gives that
\begin{equation}\label{8claO:1}
\begin{split}&
\eta\int_\R \dot u(x)\, \big( \dot u(x)-\dot z^*(x)\big)_+\,dx\\
&\qquad+\frac12\,
\iint_{\R^2} \big( u(x)-u(y)\big)\Big( 
\big( u(x)-z^*(x)\big)_+
-\big( u(y)-z^*(y)\big)_+\Big)\,K(x-y)\,dx\,dy\\&\qquad 
\le \int_\R f(x)\,\big( u(x)-z^*(x)\big)_+\,dx
\end{split}
\end{equation}
and
\begin{equation}\label{8claO:2}
\begin{split}&
M^*\int_I \big( u(x)-z^*(x)\big)_+\,dx\le
\eta\int_I \dot z^*(x) \big( \dot u(x)-\dot z^*(x)\big)_+\,dx\\&
+\frac12\,
\iint_{Q_I} \big( z^*(x)-z^*(y)\big)\Big( 
\big( u(x)-z^*(x)\big)_+-\big( u(y)-z^*(y)\big)_+\Big)
\,K(x-y)\,dx\,dy.
\end{split}
\end{equation}
Hence, subtracting~\eqref{8claO:2} to~\eqref{8claO:1}
and recalling~\eqref{MSTAR}, we obtain
\begin{equation*}\begin{split}&0\ge
\eta\int_I \big( \dot u(x)-\dot z^*(x)\big)
\big( \dot u(x)-\dot z^*(x)\big)_+\,dx\\&\qquad
+\frac12\,
\iint_{Q_I} \Big(
\big( u(x)-z^*(x)\big)-\big( u(y)-z^*(y)\big)\Big)\\&\times
\Big( 
\big( u(x)-z^*(x)\big)_+-\big( u(y)-z^*(y)\big)_+\Big)
\,K(x-y)\,dx\,dy.\end{split}
\end{equation*}
As above, this implies that~$u\le z^*$.
Combining this with~\eqref{z sotto}, we obtain
that~$z^*$ coincides with~$u$. As a consequence,
taking any function~$v\ge0$, supported in~$I$, and
defining~$w:=u-v$ in~\eqref{w test},
\begin{equation*}
\eta\int_I \dot u(x) \dot v(x)\,dx
+\frac12\,
\iint_{Q_I} \big( u(x)-u(y)\big)\big( v(x)-v(y)\big)
\,K(x-y)\,dx\,dy
\le M^*\int_I v(x)\,dx.
\end{equation*}
Integrating by parts the latter inequality, we obtain that
\begin{equation*}
\int_\R \Big(-\eta\ddot u(x) +{\mathcal{L}}u(x)\Big)\,v(x)\,dx
\le M^*\int_\R v(x)\,dx.
\end{equation*}
By duality, we thus obtain that
$$ -\eta\ddot u(x) +{\mathcal{L}}u(x)\le M^*,$$
in the sense of distributions, which is one of the inequalities in~\eqref{LW}.
The other inequality in~\eqref{LW} follows
by similar arguments.
\end{proof}

Using Lemma~\ref{etaregularitylemmaND},
Proposition~\ref{LEVSTA}
and a convolution method (see e.g. formula~(3.2) in~\cite{MR3161511}),
we obtain a useful uniform continuity
result for a perturbed problem. The statement that we
need for our purposes is the following:

\begin{corollary}\label{QCalphaobstaclecor}
Let~$Q_{\eta,{\mu}}$ be as in Lemma~\ref{78:77:76} and~$\alpha\in(0,2s)$. Then~$Q_{\eta,{\mu}}
\in C^{0,\alpha}(\R)$ and
\begin{equation}\label{ALSO5}
\|Q_{\eta,{\mu}}\|_{C^{0,\alpha}(\R)}\le \kappa,\end{equation}
for some~$\kappa> 0$, which possibly depends
on~$Q^\sharp_{\zeta_1,\zeta_2}$
and on structural constants.
\end{corollary}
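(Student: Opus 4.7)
The plan is to combine Proposition~\ref{LEVSTA} with Lemma~\ref{etaregularitylemmaND} through a mollification argument, reducing the obstacle problem to a pointwise equation to which the viscosity regularity theory applies uniformly in~$\eta$.

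First, I would observe that, by Lemma~\ref{gammagammatilde}, $Q_{\eta,\mu}$ minimizes $I_{\eta,\mu}$ in $\Sigma$. Taking competitors of the form $v=Q_{\eta,\mu}-t\varphi$ with $\varphi\ge 0$ smooth and compactly supported in a bounded interval~$I$, and $t\in(0,t_0)$ small enough so that $v\in\Sigma$ (which is automatic from $\Psi\le Q_{\eta,\mu}\le\Phi$ and continuity), the minimality $I_{\eta,\mu}(Q_{\eta,\mu})\le I_{\eta,\mu}(v)$, together with a symmetric argument for $v=Q_{\eta,\mu}+t\varphi$ where possible, yields the variational inequality~\eqref{89eiuwfhj2ery8375209138148} with forcing
\[
f(x):=-a(x)W'(Q_{\eta,\mu}(x))-\mu\bigl(Q_{\eta,\mu}(x)-Q^\sharp_{\zeta_1,\zeta_2}(x)\bigr).
\]
By~\eqref{DAFS823435rt10}, \eqref{LE:X4}, the $C^2$ regularity of $W$, and the boundedness of $a$, we have $\|f\|_{L^\infty(\R)}\le\kappa$ with $\kappa$ depending only on structural constants, independent of $\eta$ and~$\mu$. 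Proposition~\ref{LEVSTA} then gives, in the sense of distributions and on any bounded interval,
\[
\bigl|-\eta\ddot Q_{\eta,\mu}+\mathcal{L}Q_{\eta,\mu}\bigr|\le M,
\]
where $M$ depends on $\|\ddot\Phi\|_\infty$, $\|\ddot\Psi\|_\infty$, $\|\mathcal{L}\Phi\|_\infty$, $\|\mathcal{L}\Psi\|_\infty$, and $\|f\|_\infty$, and hence only on structural constants.

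Second, I would regularize by setting $u_\delta:=Q_{\eta,\mu}\ast\rho_\delta$ for a standard even mollifier $\rho_\delta$. Since both $-\eta\ddot{\;}$ and $\mathcal{L}$ commute with convolution, the distributional bound above is transformed into a classical pointwise bound for $u_\delta$, so that $u_\delta$ is a $C^\infty$ (hence viscosity) subsolution of $-\eta\ddot u+\mathcal{L}u-M=0$ and a supersolution of $-\eta\ddot u+\mathcal{L}u+M=0$ on each compact subset of $\R$. Moreover $\|u_\delta\|_{L^\infty(\R)}\le\|Q_{\eta,\mu}\|_{L^\infty(\R)}\le\kappa$ by~\eqref{DAFS823435rt10}.

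Third, I would apply Lemma~\ref{etaregularitylemmaND} on every unit ball $B_1(x_0)$: translation invariance of~$\mathcal{L}$ lets us recenter at $x_0$, and the lemma yields, for each $\alpha\in(0,2s)$,
\[
[u_\delta]_{C^{0,\alpha}(B_{1/2}(x_0))}\le C\bigl(2M+\|u_\delta\|_{L^\infty(\R)}\bigr)^{\alpha/(2s)}\|u_\delta\|_{L^\infty(\R)}^{1-\alpha/(2s)}\le\kappa,
\]
with $C$ independent of $\eta$, $\delta$, and $x_0$. Covering $\R$ by such balls gives a uniform global bound $\|u_\delta\|_{C^{0,\alpha}(\R)}\le\kappa$. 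Sending $\delta\searrow 0$, $u_\delta\to Q_{\eta,\mu}$ uniformly on $\R$ (by the uniform equicontinuity just obtained together with $L^\infty$ control), and the Hölder seminorm is lower semicontinuous under uniform convergence, giving~\eqref{ALSO5}.

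The main obstacle I expect is the careful verification that the two-sided obstacle constraint is compatible with bidirectional test functions near the coincidence sets of $Q_{\eta,\mu}$ with $\Phi$ or $\Psi$: on the contact set one direction of the variation is forbidden, and only the Lewy--Stampacchia detour through the auxiliary minimizer (as built inside the proof of Proposition~\ref{LEVSTA}) produces the two-sided distributional bound. Once this is in hand, the remaining work is routine: the mollification preserves all uniform bounds, the regularity lemma is already stated with constants independent of~$\eta$, and the global Hölder estimate follows from translation invariance of~$\mathcal{L}$ and the uniform covering.
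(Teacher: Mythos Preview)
Your approach is essentially the paper's: verify the variational inequality~\eqref{89eiuwfhj2ery8375209138148}, apply Proposition~\ref{LEVSTA} to get a two-sided distributional bound on $-\eta\ddot Q_{\eta,\mu}+\mathcal{L}Q_{\eta,\mu}$, mollify, invoke Lemma~\ref{etaregularitylemmaND}, and pass to the limit. The one place where your write-up is imprecise is the derivation of~\eqref{89eiuwfhj2ery8375209138148}: testing with $Q_{\eta,\mu}\pm t\varphi$ is \emph{not} ``automatic'' (it fails on the respective contact sets, as you note yourself), and in any case~\eqref{89eiuwfhj2ery8375209138148} must hold for \emph{all} $v\in\Sigma$ with $v=Q_{\eta,\mu}$ outside~$I$, not just for such special perturbations. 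The paper avoids this entirely by taking, for any $\xi\in\Sigma$, the convex combination $\xi_\e=(1-\e)Q_{\eta,\mu}+\e\xi\in\Sigma$ and differentiating $I_{\eta,\mu}(\xi_\e)\ge I_{\eta,\mu}(Q_{\eta,\mu})$ at $\e=0^+$; this yields~\eqref{89eiuwfhj2ery8375209138148} directly, with no contact-set issue, so the difficulty you anticipate in your last paragraph is resolved \emph{before} Proposition~\ref{LEVSTA}, not inside it. A minor further difference: the paper mollifies $v_{\eta,\mu}=Q_{\eta,\mu}-Q^\sharp_{\zeta_1,\zeta_2}$ rather than $Q_{\eta,\mu}$ and appeals to~\eqref{PURT} for the locally uniform convergence of the mollifiers.
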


\begin{proof} We take~$
v_{\eta,{\mu}}:=Q_{\eta,{\mu}}-Q_{\zeta_1,\zeta_2}^\sharp$,
as in Lemma~\ref{78:77:76}. By Lemma~\ref{gammagammatilde},
we know that~$Q_{\eta,{\mu}}\in\Sigma$.
We fix an interval~$I\subset\R$
and take any~$\xi\in \Sigma$.
For any~$\e\in(0,1)$, let~$\xi_\e:=\e\xi+(1-\e)Q_{\eta,{\mu}}=Q_{\eta,{\mu}}+\e(\xi-Q_{\eta,{\mu}})$.
Then~$\xi_\e\in \Sigma$ and therefore,
by~\eqref{dgeui:1}, we know that
\begin{eqnarray*}
0&\le& I_{\eta,{\mu}}(\xi_\e)-I_{\eta,{\mu}}(Q_{\eta,{\mu}})\\
&=& \frac\eta2\,\int_I \big(|\dot\xi_\e(x)|^2-|\dot Q_{\eta,{\mu}}(x)|^2\big)\,dx\\
&&
+ \frac\mu2\int_I\Big(
\big|\xi_\e(x)-Q_{\zeta_1,\zeta_2}^\sharp(x)\big|^2-
\big|Q_{\eta,{\mu}}(x)-Q_{\zeta_1,\zeta_2}^\sharp(x)\big|^2\Big)\,dx\\
&&
+\int_I a(x)\,\Big( W\big(\xi_\e(x)\big)-W\big(Q_{\eta,{\mu}}(x)\big)\Big)\,dx\\
&&+\frac14\,\iint_{\R\times\R}\Big(
\big| \xi_\e(x)-\xi_\e(y)\big|^2-\big| Q_{\eta,{\mu}}(x)-Q_{\eta,{\mu}}(y)\big|^2
\Big)\,K(x-y)\,dx\,dy\\
&=&
\e\eta\,\int_I \dot Q_{\eta,{\mu}}(x)\cdot\big(\dot\xi(x)-\dot Q_{\eta,{\mu}}(x)\big)\,dx\\
&&
+\e\mu\int_I\big(
Q_{\eta,\mu}(x)-Q_{\zeta_1,\zeta_2}^\sharp(x)\big)\cdot
\big( \xi(x)-Q_{\eta,\mu}(x)
\big)\,dx
\\&&+\e\int_I a(x)\,W'\big(Q_{\eta,{\mu}}(x)\big)\,\big(\xi(x)- Q_{\eta,{\mu}}(x)\big)dx\\
&&+\frac\e2\,\iint_{\R\times\R}\Big(
\big( Q_{\eta,{\mu}}(x)-Q_{\eta,{\mu}}(y)\big)\,\big( (\xi-Q_{\eta,{\mu}})(x)-(\xi-Q_{\eta,{\mu}})(y)\big)
\Big)\,K(x-y)\,dx\,dy\\
&&+o(\e).
\end{eqnarray*}
Thus, dividing this inequality by~$\e$ and sending~$\e\searrow0$,
we conclude that~$Q_{\eta,{\mu}}$ satisfies~\eqref{89eiuwfhj2ery8375209138148}
with
$$f:=-aW'(Q_{\eta,\mu})-\mu
(Q_{\eta,\mu} -Q_{\zeta_1,\zeta_2}^\sharp) .$$ Accordingly, by 
formula~\eqref{LW} in
Proposition~\ref{LEVSTA},
we know that
\begin{equation*}
-\const\le-\eta \ddot Q_{\eta,\mu}+{\mathcal{L}}Q_{\eta,\mu}\le\const
\end{equation*}
and therefore
\begin{equation}\label{ALSOIN}
-\kappa\le-\eta \ddot v_{\eta,\mu}+{\mathcal{L}}v_{\eta,\mu}\le\kappa
\end{equation}
in the sense of distributions,
for some~$\kappa> 0$, which possibly depends
on~$Q^\sharp_{\zeta_1,\zeta_2}$.

Now we take an even function~$\vartheta\in C^\infty_0([-1,1])$
and for any~$\e\in(0,1)$ we set~$\vartheta_\e(x):=\e^{-1}\vartheta(x/\e)$.
We consider the mollification~$v_{\eta,\mu,\e}:=v_{\eta,\mu} *\vartheta_\e$.
Notice that, as~$\e\searrow0$, we have that
\begin{equation}\label{ALSO4}
{\mbox{$v_{\eta,\mu,\e}$ converges locally uniformly to $v_{\eta,\mu}$,}}\end{equation}
thanks to~\eqref{PURT}.
Moreover, we observe that, for any~$\varphi\in C^\infty_0(\R)$,
\begin{eqnarray*}
&&\Big|\big( v_{\eta,\mu}(x)-v_{\eta,\mu}(y)\big)
\big( \varphi(x)-\varphi(y)\big)\,\vartheta_\e(z)\,K(x-y)\Big|\\
&&\qquad\le
\Big(
\big| v_{\eta,\mu}(x)-v_{\eta,\mu}(y)\big|^2\,K(x-y)
+
\big| \varphi(x)-\varphi(y)\big|^2\,K(x-y)\Big)\,\chi_{[-1,1]}(z)
,\end{eqnarray*}
which, as a function of~$(x,y,z)\in\R\times\R\times\R$, belongs
to~$L^1(\R\times\R\times\R)$, thanks to~\eqref{LE:X3}.
This implies that we can exploit the Dominated Convergence Theorem
and obtain that
\begin{equation}
\label{ALSO2}\begin{split}
& \iint_{\R\times\R}
\big( v_{\eta,\mu,\e}(x)-v_{\eta,\mu,\e}(y)\big)
\big( \varphi(x)-\varphi(y)\big)\,K(x-y) \,dx\,dy\\
=\;&
\iint_{\R\times\R} \left[\int_\R
\big( v_{\eta,\mu}(x-z)-v_{\eta,\mu}(y-z)\big)\,\vartheta_\e(z)\,
\big( \varphi(x)-\varphi(y)\big)\,K(x-y) \,dz\right]\,dx\,dy\\
=\;&
\int_\R \left[\iint_{\R\times\R}
\big( v_{\eta,\mu}(x-z)-v_{\eta,\mu}(y-z)\big)\,\vartheta_\e(z)\,
\big( \varphi(x)-\varphi(y)\big)\,K(x-y) \,dx\,dy\right]\,dz
\\ =\;&
\int_\R \left[\iint_{\R\times\R}
\big( v_{\eta,\mu}(x)-v_{\eta,\mu}(y)\big)\,\vartheta_\e(z)\,
\big( \varphi(x+z)-\varphi(y+z)\big)\,K(x-y) \,dx\,dy\right]\,dz\\=\;&
\iint_{\R\times\R} \left[\int_\R
\big( v_{\eta,\mu}(x)-v_{\eta,\mu}(y)\big)\,\vartheta_\e(z)\,
\big( \varphi(x+z)-\varphi(y+z)\big)\,K(x-y) \,dz\right]\,dx\,dy\\=\;&
\iint_{\R\times\R} \left[\int_\R
\big( v_{\eta,\mu}(x)-v_{\eta,\mu}(y)\big)\,\vartheta_\e(z)\,
\big( \varphi(x-z)-\varphi(y-z)\big)\,K(x-y) \,dz\right]\,dx\,dy\\=\;&
\iint_{\R\times\R}
\big( v_{\eta,\mu}(x)-v_{\eta,\mu}(y)\big)
\big( \varphi_\e(x)-\varphi_\e(y)\big)\,K(x-y) \,dx\,dy,
\end{split}\end{equation}
where~$\varphi_\e:=\varphi*\vartheta_\e$. 
Similarly, by~\eqref{LE:X2}, we see that
\begin{equation}
\label{ALSO3} \int_\R \dot v_{\eta,\mu,\e}(x)\,\dot \varphi(x)\,dx=
\int_\R \dot v_{\eta,\mu}(x)\,\dot \varphi_\e(x)\,dx.
\end{equation}
{F}rom~\eqref{ALSOIN}, \eqref{ALSO2} and~\eqref{ALSO3}
we infer that, for any~$\varphi\in C^\infty_0(\R,[0,1])$,{\footnotesize{
\begin{eqnarray*}&&\left|
\eta\int_\R \dot v_{\eta,\mu,\e}(x)\,\dot \varphi(x)\,dx+\frac12\,
\iint_{\R\times\R}
\big( v_{\eta,\mu,\e}(x)-v_{\eta,\mu,\e}(y)\big)
\big( \varphi(x)-\varphi(y)\big)\,K(x-y) \,dx\,dy\right|\\
&&\qquad=\left|
\eta\int_\R \dot v_{\eta,\mu}(x)\,\dot \varphi_\e(x)\,dx+\frac12\,
\iint_{\R\times\R}
\big( v_{\eta,\mu}(x)-v_{\eta,\mu}(y)\big)
\big( \varphi_\e(x)-\varphi_\e(y)\big)\,K(x-y) \,dx\,dy\right|\\&&\qquad\le
\kappa\left|\int_\R \varphi_\e(x)\,dx\right|\le\kappa\int_\R \varphi(x)\,dx.
\end{eqnarray*}}}
\noindent That is,
\begin{equation*}
-\kappa\le-\eta \ddot v_{\eta,\mu,\e}+{\mathcal{L}}v_{\eta,\mu,\e}\le\kappa
\end{equation*}
in the sense of distributions, and also in the classical and viscosity senses,
since~$v_{\eta,\mu,\e}$ is smooth.
Therefore, we are in the position of applying
Lemma~\ref{etaregularitylemmaND} to~$v_{\eta,\mu,\e}$ and conclude that
\begin{equation*}\begin{split}
[v_{\eta,\mu,\e}]_{C^{0,\alpha}(\R)}&\le\kappa\big(
1+\|v_{\eta,\mu,\e}\|_{L^\infty(\R)}
\big)^{\frac\alpha{2s}}\|v_{\eta,\mu,\e}\|_{L^\infty(\R)}^{1-\frac{\alpha}{2s}}\\&
\le\kappa\big(
1+\|v_{\eta,\mu}\|_{L^\infty(\R)}
\big)^{\frac\alpha{2s}}\|v_{\eta,\mu}\|_{L^\infty(\R)}^{1-\frac{\alpha}{2s}},
\end{split}\end{equation*}
for any~$\alpha\in(0,2s)$ (up to freely
renaming~$\kappa$). As a consequence of this and~\eqref{LE:X4},
we obtain that~$[v_{\eta,\mu,\e}]_{C^{0,\alpha}(\R)}\le\kappa$.
This and~\eqref{ALSO4} imply that~$[v_{\eta,\mu}]_{C^{0,\alpha}(\R)}\le\kappa$.
Using this and~\eqref{LE:X4}, we obtain that~$
\|v_{\eta,\mu}\|_{C^{0,\alpha}(\R)}\le\kappa$, which in turn implies~\eqref{ALSO5},
as desired.
\end{proof}

\section{Clean intervals and clean points}\label{sec:CI}

Here we deal with the notions of {\em clean intervals}
and {\em clean points}, which have been introduced
in Section~6 of~\cite{MR3594365} to perform glueing techniques
in the nonlocal setting.

\begin{definition}\label{DEF:CLEAN} 
Given~$\rho>0$ and~$Q:\R\to\R$, we say that an interval~$J\subseteq\R$
is a ``clean interval'' for~$(\rho,Q)$
if~$|J|\ge |\log\rho|$ and there exists~$\zeta\in\mathcal{Z}$
such that
$$ \sup_{x\in J}|Q(x)-\zeta|\le\rho.$$
\end{definition}

\begin{definition}\label{DEF:CLEAN:PT}
If~$J$ is a bounded clean interval for~$(\rho,Q)$,
the center of~$J$ is called a ``clean point'' for~$(\rho,Q)$.
\end{definition}

Here we show that any sufficiently large interval
contains a clean interval. 

\begin{lemma}\label{CLEAN:LEMMA}
 %Let~$\delta_0$  in~\eqref{POT:GROW}.
Let $Q_{\eta,\mu}$ be as in Lemma~\ref{78:77:76}.  Then,  there exist $\rho_0\in(0,1)$ and $\kappa_1>0$ depending on~$Q_{\zeta_1,\zeta_2}^\sharp$
and on the structural constants such that, if $\rho\in (0,\rho_0)$ and 
$J\subseteq\R$ is an
interval such that
\begin{equation}\label{J:ASSU}
|J|\ge 
\frac{\kappa_1[Q_{\eta,\mu}]^\frac{1}{\alpha}_{C^{0,\alpha}(J)}}{ \mu^2\,\rho^{2+\frac{1}{\alpha}}}|\log{\rho}|,\end{equation}
for $\alpha\in(0,2s)$,  then there exists
a clean interval for~$(\rho,Q_{\eta,\mu})$ that is contained in~$J$.
\end{lemma}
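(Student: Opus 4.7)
The plan is to argue by contradiction: assuming $J$ contains no clean interval for $(\rho,Q_{\eta,\mu})$, I will derive an upper bound on $|J|$ that conflicts with \eqref{J:ASSU}. The three ingredients are the Hölder regularity from Corollary~\ref{QCalphaobstaclecor}, the uniform $L^2$ bound \eqref{LE:X5}, and the discreteness of $\mathcal{Z}$.

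First I fix $\rho_0\in(0,\delta_0)$ small enough that, among the (finitely many) equilibria lying in the compact interval $[\min\{\zeta_1,\zeta_2\},\max\{\zeta_1,\zeta_2\}]$, the balls $B_{\rho_0}(\zeta)$ are pairwise disjoint. By \eqref{DAFS823435rt10}, $Q_{\eta,\mu}(\R)$ is contained in this compact interval. Up to discarding an initial piece of $J$ of bounded length, I may also assume $J\subseteq\R\setminus[-1,1]$, so that $Q_{\zeta_1,\zeta_2}^\sharp$ takes values in $\{\zeta_1,\zeta_2\}\subset\mathcal{Z}$ everywhere on $J$. Next, partition $J$ into $N$ consecutive disjoint subintervals $J_1,\dots,J_N$ of length exactly $|\log\rho|$ (ignoring a possibly shorter leftover piece), so $N\ge|J|/|\log\rho|-1$. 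By assumption, no $J_k$ is clean for $(\rho,Q_{\eta,\mu})$, hence there must exist $x_k\in J_k$ with $\mathrm{dist}(Q_{\eta,\mu}(x_k),\mathcal{Z})\ge\rho$. Indeed, if $Q_{\eta,\mu}(J_k)\subseteq\bigcup_{\zeta\in\mathcal{Z}}B_\rho(\zeta)$, then the continuity of $Q_{\eta,\mu}$ from Corollary~\ref{QCalphaobstaclecor} together with the disjointness of the $B_\rho(\zeta)$ would force $Q_{\eta,\mu}(J_k)\subseteq B_\rho(\zeta)$ for a single $\zeta$, making $J_k$ clean.

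Setting $r:=\bigl(\rho/(2[Q_{\eta,\mu}]_{C^{0,\alpha}(J)})\bigr)^{1/\alpha}$, the Hölder bound yields $|Q_{\eta,\mu}(x)-Q_{\eta,\mu}(x_k)|\le\rho/2$ whenever $|x-x_k|\le r$, so that $\mathrm{dist}(Q_{\eta,\mu}(x),\mathcal{Z})\ge\rho/2$ at such $x$. For $\rho$ small one has $r\le|\log\rho|$, hence at least a one-sided neighborhood $I_k\subset J_k$ of $x_k$ of length $r$ is available. Since $Q_{\zeta_1,\zeta_2}^\sharp\in\mathcal{Z}$ on $J\supset I_k$, this gives $|v_{\eta,\mu}(x)|\ge\rho/2$ for all $x\in I_k$. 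Summing over the pairwise disjoint $I_k$ and invoking \eqref{LE:X5} yields
$$ \frac{\rho^2}{4}\,N\,r\le\sum_{k=1}^N\int_{I_k}|v_{\eta,\mu}(x)|^2\,dx\le\|v_{\eta,\mu}\|_{L^2(\R)}^2\le\frac{\kappa^2}{\mu^2}. $$
Substituting the definition of $r$ and the lower bound on $N$ produces
$$ |J|\le C\,\frac{[Q_{\eta,\mu}]^{1/\alpha}_{C^{0,\alpha}(J)}\,|\log\rho|}{\mu^2\,\rho^{\,2+1/\alpha}}, $$
for a structural constant $C$, which contradicts \eqref{J:ASSU} as soon as $\kappa_1>C$.

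The main obstacle is the dichotomy step: to argue that a non-clean subinterval must contain a point where $Q_{\eta,\mu}$ is $\rho$-far from \emph{every} equilibrium, rather than merely oscillating between different $\rho$-neighborhoods of distinct equilibria. This is the one place where I genuinely use the discreteness of $\mathcal{Z}$ together with the continuity of $Q_{\eta,\mu}$; once this is in hand, the remainder is a routine combination of Hölder regularity and the perturbation $L^2$ bound.
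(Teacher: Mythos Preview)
Your argument is correct and follows the same contradiction scheme as the paper: partition $J$ into $N$ subintervals of length $|\log\rho|$, locate in each a point at distance $\ge\rho$ from $\mathcal{Z}$ (your justification of this step via continuity and disjointness of the balls $B_\rho(\zeta)$ is in fact more explicit than the paper's), thicken each such point to an interval of length $\sim(\rho/[Q_{\eta,\mu}]_{C^{0,\alpha}})^{1/\alpha}$ by H\"older regularity, and bound $N$ from above. The one substantive difference is the quantity used in this last step. The paper exploits the potential: on the bad set $W(Q_{\eta,\mu})\ge c_0\rho^2/4$ by \eqref{POT:GROW}, and the bound $\int_\R aW(Q_{\eta,\mu})\le\const/\mu^2$ follows from minimality \eqref{LE:X1} combined with the lower bound \eqref{LE:X6} on $E_{\R^2}(Q_{\eta,\mu})$. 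You instead use the $L^2$ penalization: on the bad set (away from $[-1,1]$) one has $|v_{\eta,\mu}|\ge\rho/2$, and \eqref{LE:X5} alone gives $\|v_{\eta,\mu}\|_{L^2}^2\le\kappa^2/\mu^2$. Your route is slightly more economical in that it invokes a single estimate from Lemma~\ref{78:77:76}, but in exchange you must excise the region where $Q^\sharp_{\zeta_1,\zeta_2}\notin\mathcal{Z}$; your phrase ``discarding an initial piece of $J$'' is not quite right when $J$ straddles $[-1,1]$, though the obvious fix (drop the at most two $J_k$ meeting $[-1,1]$, which only shifts $N$ by a bounded amount) is immediate. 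The paper's potential-based route avoids this wrinkle since the lower bound on $W$ holds at every point regardless of position.
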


\begin{proof} 
By~Corollary \ref{QCalphaobstaclecor}, we know that~$Q_{\eta,\mu}\in C^{0,\alpha}(J)$ for any $\alpha\in(0,2s)$.
Without loss of generality we can assume
that~$[Q_{\eta,\mu}]_{C^{0,\alpha}(J)} \geq 1$.
Assume, by contradiction, that
\begin{equation}\label{90iuojdc876543rg}
{\mbox{$J$ does not contain
any clean subinterval.}} \end{equation}
By~\eqref{J:ASSU}, the interval~$J$
contains~$N$ disjoint subintervals, say~$J_1,\dots,J_N$,
each of length~$|\log\rho|$, with
\begin{equation}\label{98iiogur76yuh}
N\ge \frac{\kappa_1[Q_{\eta,\mu}]^\frac{1}{\alpha}_{C^{0,\alpha}(J)}}{
{\mu^2}\,
\rho^{2+\frac{1}{\alpha}}}-1,\end{equation}
and, by~\eqref{90iuojdc876543rg}, none of the subintervals~$J_i$
is clean. Hence, for any~$i\in\{1,\dots,N\}$, there exists~$p_i\in J_i$
such that~$Q(p_i)$ stays at distance larger than~$\rho$ from  $\mathcal{Z}$. 
Also, letting
$$ \ell_\rho :=
\left(  \frac{\rho}{2  [Q_{\eta,\mu}]_{C^{0,\alpha}(J)}}\right)^\frac{1}{\alpha},$$% with $\kappa$ given in \eqref{LE:X7},
we have that,
for any~$x\in J'_i:=[p_i-\ell_\rho,p_i+\ell_\rho]$,
$$ |Q_{\eta,\mu}(x)-Q_{\eta,\mu}(p_i)|\le [Q_{\eta,\mu}]_{C^{0,\alpha}(J)}|x-p_i|^{\alpha}
\le  [Q_{\eta,\mu}]_{C^{0,\alpha}(J)}\, \ell_\rho^{\alpha}
=\frac{\rho}{2}.$$
Accordingly,~$Q_{\eta,\mu}(x)$ stays at distance larger
than~$\frac\rho2$
from $\mathcal{Z}$ for any~$x\in J'_i$ and then,
by~\eqref{POT:GROW},
$$ W(Q_{\eta,\mu}(x))\ge \frac{c_0\,\rho^2}{4}.$$
Moreover, for $\rho$ sufficiently small, at least half of the interval~$J'_i$
lies in~$J_i$, hence
$$ \int_{J_i\cap J_i'} W(Q_{\eta,\mu}(x))\,dx\ge \frac{c_0\,\rho^2\,
\ell_\rho}{4}=\frac{   \kappa \rho^{2+\frac{1}{\alpha}   }}{
[Q_{\eta,\mu}]_{ C^{0,\alpha}(J)}^{\frac{1}{\alpha}  } 
 }.$$
Summing up over~$i=1,\dots,N$, using that the intervals~$J_i$
are disjoint and recalling~\eqref{POT:GROW2},~\eqref{LE:X1}
and~\eqref{LE:X6}, we find that
\begin{equation*}\begin{split}  I_{\eta,\mu}Q_{\zeta_1,\zeta_2}^\sharp&\geq I_{\eta,\mu}(Q_{\eta,\mu})\\&
\ge -\frac\kappa{\mu^2}+\sum_{i=1}^N \int_{J_i\cap J_i'}
a(x)W(Q_{\eta,\mu}(x))\,dx\\&\ge 
-\frac\kappa{\mu^2}+\frac{N\underline{a}\kappa\rho^{2+\frac{1}{\alpha}}}{
[Q_{\eta,\mu}]_{C^{0,\alpha}(J)}^{\frac{1}{\alpha}}    },
\end{split}\end{equation*}
which gives
$$N\leq \frac{\kappa [Q_{\eta,\mu}]_{C^{0,\alpha}(J)}
^{\frac{1}{\alpha}}
}{{\mu^2}\,\rho^{2+\frac{1}{\alpha}}}.$$
This is a contradiction with~\eqref{98iiogur76yuh} for $\kappa_1> \kappa+1$
and so it proves the desired result.
\end{proof}

\begin{lemma}\label{regularitycleaninterval}
Let $Q_{\eta,\mu}$ be as in Lemma~\ref{78:77:76}.
Let~$T>1$ and~$J:=(x_0-4T,x_0+4T)$ be a clean interval
for~$(\rho,Q_{\eta,\mu})$. Then, for any $\alpha\in(0,2s)$, 
$$ [Q_{\eta,\mu} ]_{C^{0,\alpha}(x_0-T,x_0+T)}\le
C \left( \frac{\rho^{1-\frac{\alpha}{2s}}}{|\log{\rho}|^\alpha}
+\rho+{\mu^{\frac{\alpha}{2s}}
\rho^{1-\frac{\alpha}{2s}}}\right)
,$$
for some $C>0$, independent of~$\eta$ and~$\mu$.
\end{lemma}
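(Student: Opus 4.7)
The plan is to reduce the estimate to a direct application of Lemma~\ref{IL NUOVO}. First I would extract from Definition~\ref{DEF:CLEAN} the two pieces of data we need: there exists~$\zeta\in\mathcal{Z}$ with~$|Q_{\eta,\mu}-\zeta|\le\rho$ on~$J=(x_0-4T,x_0+4T)$, and~$8T=|J|\ge|\log\rho|$, whence~$T\ge|\log\rho|/8$. Since~\eqref{DAFS823435rt10} confines~$Q_{\eta,\mu}$ to~$[\min\{\zeta_1,\zeta_2\},\max\{\zeta_1,\zeta_2\}]$ and~$\zeta_1,\zeta_2$ are nearest neighbors, for~$\rho$ sufficiently small the equilibrium~$\zeta$ must coincide with~$\zeta_1$ or~$\zeta_2$ (and for~$\rho$ bounded away from~$0$ the desired estimate is already trivial from the a~priori bound in Corollary~\ref{QCalphaobstaclecor}, so we may and do take~$\rho$ small).

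The nontrivial step is to verify that~$Q_{\eta,\mu}$ actually satisfies the Euler-Lagrange equation~\eqref{HJA:AA:2:EQ} on the whole of~$J$. By Lemma~\ref{HJA:AA:2} the equation holds on the free set~$F=J_*\cup L$ of~\eqref{Freeset}, so the task is to prove~$J\subseteq F$. The portion of~$J$ that lies in~$J_*=(b_1,b_2)$ is automatic. For the portion in~$(-\infty,b_1]\cup[b_2,+\infty)$, a direct inspection of the obstacle construction~\eqref{phi}--\eqref{Psidef}, combined with the quantitative bounds~\eqref{etaC}, shows that~$\Phi$ and~$\Psi$ stay at distance at least~$3r/4$ from each of~$\zeta_1,\zeta_2$ in every region in which they are defined; hence~$|Q_{\eta,\mu}-\zeta|\le\rho\ll r$ forces~$\Psi<Q_{\eta,\mu}<\Phi$ strictly on that part of~$J$, which then lies inside~$L$.

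With the equation available on~$(x_0-4T,x_0+4T)$, Lemma~\ref{IL NUOVO} (which itself absorbs the translations~$\zeta\mapsto 0$ and~$x_0\mapsto 0$) yields
\[
[Q_{\eta,\mu}]_{C^{0,\alpha}(x_0-T,x_0+T)}\le CT^{-\alpha}\bigl(\|Q_{\eta,\mu}\|_{L^\infty(\R)}+T^{2s}\rho+T^{2s}\mu\bigr)^{\alpha/2s}\rho^{1-\alpha/2s}.
\]
I would then use the subadditivity of~$t\mapsto t^{\alpha/2s}$ (valid since~$\alpha<2s$), the uniform $L^\infty$ bound from~\eqref{DAFS823435rt10}, and~$T\ge|\log\rho|/8$ to split the right-hand side into three summands that simplify respectively (up to absolute constants) to~$\rho^{1-\alpha/2s}/|\log\rho|^\alpha$, $\rho$, and~$\mu^{\alpha/2s}\rho^{1-\alpha/2s}$, which is exactly the claimed estimate. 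I expect the inclusion~$J\subseteq F$ to be the only delicate point: everything else reduces to invoking Lemma~\ref{IL NUOVO} and performing elementary algebra, while this inclusion requires the quantitative comparison~$\rho\ll r$ together with a careful reading of the obstacles in~\eqref{phi}--\eqref{Psidef}.
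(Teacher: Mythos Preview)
Your proposal is correct and follows essentially the same route as the paper: extract~$T\ge|\log\rho|/8$ and the closeness to~$\zeta$ from Definition~\ref{DEF:CLEAN}, observe that~$J\subset F$ so that the Euler--Lagrange equation holds on~$J$, apply Lemma~\ref{IL NUOVO}, and then split the resulting bound using the subadditivity of~$t\mapsto t^{\alpha/2s}$ together with~$T\ge|\log\rho|/8$ and the uniform~$L^\infty$ bound from~\eqref{DAFS823435rt10}. The paper actually asserts the inclusion~$J\subset F$ without justification, so your detailed verification of it via the obstacle bounds~\eqref{Phidef}--\eqref{Psidef} is a welcome addition rather than a deviation.
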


\begin{proof}
Let $\zeta\in\mathcal{Z}$ be such that $\displaystyle\sup_{x\in J}|Q_{\eta,\mu}(x)-\zeta|\le\rho.$
Then, according to Definition~\ref{DEF:CLEAN}, we have that 
\begin{equation}\label{cleanintervallenght}
T\geq \frac{|\log\rho|}{8},\end{equation} and 
$J\subset F$, where  $F$ is defined as in \eqref{Freeset}. 
Therefore, by Lemma~\ref{HJA:AA:2},  $Q_{\eta,\mu}$ is solution of 
\begin{equation*}
-{\eta,}\,\ddot{Q}_{\eta,\mu}+
\mu(Q_{\eta,\mu}-Q_{\zeta_1,\zeta_2}^\sharp)+{\mathcal{L}}Q_{\eta,\mu}+ a\, W'(Q_{\eta,\mu}) =0
\quad\text{in }J.\end{equation*} 
Then by
Lemma~\ref{IL NUOVO},~\eqref{DAFS823435rt10} and~\eqref{cleanintervallenght}, for~$\alpha<2s$, we have that
\begin{equation*}
\begin{split}
[Q_{\eta,\mu}]_{C^{0,\alpha}(x_0-T,x_0+T)} \,&\le
CT^{-\alpha} \left(
1+T^{2s}\rho+{T^{2s}\mu}\right)^{\frac{\alpha}{2s}}\,\rho^{1-\frac{\alpha}{2s}}\\
&\leq 
CT^{-\alpha} \left(
1+T^{\alpha}\rho^{\frac{\alpha}{2s}}+
{T^{\alpha}\mu^{\frac{\alpha}{2s}}}\right)\,\rho^{1-
\frac{\alpha}{2s}}
\\ &\le
C \left( T^{-\alpha}\rho^{1-\frac{\alpha}{2s}}
+\rho+{\mu^{\frac{\alpha}{2s}}\rho^{1-\frac{\alpha}{2s}}}\right)\\&
\le C\left( \frac{\rho^{1-\frac{\alpha}{2s}}}{|\log{\rho}|^\alpha}
+\rho+{\mu^{\frac{\alpha}{2s}}
\rho^{1-\frac{\alpha}{2s}}}\right),
\end{split}
\end{equation*}
by possibly renaming $C$. This proves the desired estimate
of Lemma~\ref{regularitycleaninterval}.
\end{proof}

\begin{remark}{\rm
Given $x_0\in\R$ and $\beta\in(1,+\infty)$, let  $P:\R\to\R$ be a function such that 
\begin{equation}\label{Pproprem}
v:=P-Q_{\zeta_1,\zeta_2}^\sharp\in H^1(\R)
\end{equation}
and $P$ is H\"older continuous in $(x_0-\beta,x_0+\beta)$, with
\begin{equation}\label{Plip}
[P]_{C^{0,\alpha}(x_0-\beta,x_0+\beta)}\leq\delta,\end{equation}
for some $\delta>0$.
Given $T_1$, $T_2$ such that~$-\infty\leq T_1\leq x_0-\beta<x_0+\beta\leq T_2\leq+\infty$, let us denote 
$$I_-:=(T_1,x_0),\quad I_+:=(x_0,T_2)$$ and 
$$J_-:=(T_1,x_0-\beta),\quad D_-:=(x_0-\beta,x_0),\quad
D_+:=(x_0,x_0+\beta),\quad J_+:=(x_0+\beta,T_2).$$
We want to estimate $E_{(T_1,T_2)^2}(P)$ in terms of $E_{I_-^2}(P)$ and $E_{I_+^2}(P)$.
We have that 
\begin{equation}\label{gliongeqrem1}E_{(T_1,T_2)^2}(P)=
E_{I_-^2}(P)+E_{I_+^2}(P)+2E_{I_-\times I_+}(P)\end{equation}
and 
\begin{equation}\label{gliongeqrem2}
E_{I_-\times I_+}(P)=E_{J_-\times I_+}(P)+E_{D_-\times D_+}(P)+
E_{D_-\times J_+}(P).\end{equation}
By \eqref{Plip} and \eqref{KERNEL}, 
\begin{equation}\label{gliongeqrem3}\begin{split}0\le E_{D_-\times D_+}(P)+[Q_{\zeta_1,\zeta_2}^\sharp]^2_{K,D_-\times D_+}&=\int_{x_0-\beta}^{x_0}\int_{x_0}^{x_0+\beta}
|P(x)-P(y)|^2\,K(x-y)\,dx\,dy\\&
\leq \Theta_0 \delta^2 \int_{x_0-\beta}^{x_0}\int_{x_0}^{x_0+\beta}|x-y|^{
2\alpha-1-2s}\,dx\,dy\\&
\leq \kappa\delta^2 \beta^{2\alpha+1-2s}.
\end{split}
\end{equation}
Moreover, recalling \eqref{Pproprem}, we have that
\begin{equation}\begin{split}\label{iegggdh8575}
E_{J_-\times I_+}(P)&=[v]^2_{K,J_-\times I_+}
+2\B_{J_-\times I_+}(v,Q_{\zeta_1,\zeta_2}^\sharp).
%+2\sum_{m=1}^n\int_{T_1}^{x_0-\beta}\int_{ x_0}^{T_2}\big(v_m(x)-v_m(y)\big)\big(Q_{\zeta_1,\zeta_2}^\sharp_m(x)-(Q_{\zeta_1,\zeta_2}^\sharp(y))_m\big)\,K_m(x-y)\,dx\,dy,
\end{split}
\end{equation}
Now, by \eqref{KERNEL}, {\footnotesize{
\begin{equation*}\begin{split}
&\left|\B_{J_-\times I_+}(v,Q_{\zeta_1,\zeta_2}^\sharp)\right|
=\left|\int_{T_1}^{x_0-\beta}\int_{ x_0}^{T_2}\big(v(x)-v(y)\big)
\big(Q_{\zeta_1,\zeta_2}^\sharp(x)-Q_{\zeta_1,\zeta_2}^\sharp
(y)\big)\,K(x-y)\,dx\,dy\right|\\&\qquad\quad
\leq 2\Theta_0\|Q_{\zeta_1,\zeta_2}^\sharp
\|_{L^\infty(\R)}\int_{-\infty}^{x_0-\beta}
\int_{ x_0}^{+\infty}\big(|v(x)|+|v(y)|\big)|x-y|^{-1-2s}\,dx\,dy\\
&\qquad\quad
=\frac{\Theta_0\|Q_{\zeta_1,\zeta_2}^\sharp\|_{L^\infty(\R)}}{s}
\left[\int_{-\infty}^{x_0-\beta}|v(x)|(x_0-x)^{-2s}\,dx
+\int_{ x_0}^{+\infty}|v(y)|(y-x_0+\beta)^{-2s}\,dy\right].
\end{split}
\end{equation*}}} \noindent
In addition, using the Cauchy-Schwarz inequality and \eqref{RANGE}, we see that 
\begin{equation*}\begin{split}
&\int_{-\infty}^{x_0-\beta}|v(x)|(x_0-x)^{-2s}\,dx\leq\left(\int_{-\infty}^{x_0-\beta}|v(x)|^2\,dx)\right)^\frac{1}{2}\left(\int_{-\infty}^{x_0-\beta}(x_0-x)^{-4s}\,dx\right)^\frac{1}{2}\\&
\leq \kappa\|v\|_{L^2(\R)}\beta^{-\frac{4s-1}{2}}.
\end{split}
\end{equation*}
Similarly, 
\begin{equation*}
\int_{ x_0}^{+\infty}|v(y)|(y-x_0+\beta)^{-2s}\,dy\leq \kappa\|v\|_{L^2(\R)}\beta^{-\frac{4s-1}{2}}.
\end{equation*}
Plugging these pieces of information into~\eqref{iegggdh8575},
we have that
\begin{equation}\label{gliongeqrem4}
\left|E_{J_-\times I_+}(P)\right|\leq [v]^2_{K,(-\infty,x_0-\beta)\times (x_0,+\infty)}+ \kappa\|v\|_{L^2(\R)}\beta^{-\frac{4s-1}{2}}
\end{equation}
Similar computations give
\begin{equation}\label{gliongeqrem5}
\left|E_{D_-\times J_+}(P)\right|\leq [v]^2_{K,(x_0-\beta,x_0)\times (x_0+\beta,+\infty)} +\kappa\|v\|_{L^2(\R)}\beta^{-\frac{4s-1}{2}}
\end{equation}
Hence, from~\eqref{gliongeqrem2},
\eqref{gliongeqrem3},
\eqref{gliongeqrem4} and~\eqref{gliongeqrem5}, we conclude that
\begin{eqnarray*}
&& \left| E_{I_-\times I_+}(P) + [ Q_{\zeta_1,\zeta_2
}^\sharp]^2_{K,(x_0-\beta,x_0)\times (x_0,x_0+\beta)}\right|
\\&\le & \kappa\delta^2 \beta^{2\alpha+1-2s_0}+
\kappa\|v\|_{L^2(\R)}\beta^{-\frac{4s-1}{2}}\\&&\quad
+ [v]^2_{K,(-\infty,x_0-\beta)\times (x_0,+\infty)}+
[v]^2_{K,(x_0-\beta,x_0)\times (x_0+\beta,+\infty)}.
\end{eqnarray*}
This and~\eqref{gliongeqrem1} imply that
\begin{equation}\label{energygluingest}\begin{split}
&\left|E_{(T_1,T_2)^2}(P)-E_{(T_1,x_0)^2}(P)-
E_{(x_0,T_2)^2}(P)+2[ Q_{\zeta_1,\zeta_2
}^\sharp]^2_{K,(x_0-\beta,x_0)\times (x_0,x_0+\beta)}\right|
\\ \le \;&\kappa\delta^2 \beta^{2\alpha+1-2s}+
\kappa\|v\|_{L^2(\R)}\beta^{-\frac{4s-1}{2}}\\&\quad
+ 2[v]^2_{K,(-\infty,x_0-\beta)\times (x_0,+\infty)}+
2[v]^2_{K,(x_0-\beta,x_0)\times (x_0+\beta,+\infty)}.
\end{split}
\end{equation}

Now, thanks to \eqref{energygluingest}, one can consider a clean point~$x_0$ (according to
Definitions~\ref{DEF:CLEAN} and~\ref{DEF:CLEAN:PT})
and glue an optimal trajectory~$Q_{\eta,\mu}$ 
to a linear interpolation with the equilibrium~$\zeta$,
close to~$Q_{\eta,\mu}(x_0)$. Namely, one can consider
\begin{equation}\label{Pgluirem}P(x):=\left\{
\begin{matrix}
Q_{\eta,\mu}(x)  & {\mbox{ if }}x\le x_0,\\
R(x)& {\mbox{ if }} x> x_0,
\end{matrix}
\right. \end{equation}
where $R$ is such that $P- Q_{\zeta_1,\zeta_2}^\sharp
\in H^1(\R)$ and it is defined in~$[x_0,x_0+\beta]$ as follows:
$$R(x):=\left\{
\begin{matrix}
Q_{\eta,\mu}(x_0)\,(x_0+1-x)+\zeta\,(x-x_0) & {\mbox{ if }}x\in
(x_0,x_0+1),\\
\zeta & {\mbox{ if }} x\in [x_0+1,x_0+\beta).\\
\end{matrix}
\right.
$$
In this way, and taking~$\rho>0$ suitably small,
by Definitions~\ref{DEF:CLEAN}
and~\ref{DEF:CLEAN:PT}, we know
that~$Q_{\eta,\mu}$ is $\rho$-close to
an equilibrium in~$[x_0-4\beta,x_0+4\beta]$, with
\begin{equation}\label{098idkscf7gihoj777}
\beta=\beta(\rho)=
\frac{|\log\rho|}{8}.\end{equation}
Moreover, by Lemma~\ref{regularitycleaninterval},
we have that, for $\alpha\in (0,2s)$, 
\begin{equation}\label{Lipstrajecrem}
[Q_{\eta,\mu} ]_{C^{0,\alpha}(x_0-\beta,x_0+\beta)}\le
C \left( \frac{\rho^{1-\frac{\alpha}{2s}}}{|\log{\rho}|^\alpha}
+\rho+{\mu^{\frac{\alpha}{2s}}
\rho^{1-\frac{\alpha}{2s}}}\right)
,\end{equation}
for some $C>0$. Also, we observe that 
$$[R ]_{C^{0,\alpha}(x_0,x_0+\beta)}\le \kappa \rho.$$
As a consequence of this and~\eqref{Lipstrajecrem},
the function~$P$ defined in~\eqref{Pgluirem} satisfies~\eqref{Plip}
with 
\begin{equation}\label{deltalipete}\delta:=
C \left( \frac{\rho^{1-\frac{\alpha}{2s}}}{|\log{\rho}|^\alpha}
+\rho+{\mu^{\frac{\alpha}{2s}}
\rho^{1-\frac{\alpha}{2s}}}\right)\end{equation} 
and $\alpha\in (0,2s)$. 
Thus, choosing~$\beta$
as in~\eqref{098idkscf7gihoj777} and~$\delta$ as in~\eqref{deltalipete}, and recalling~\eqref{LE:X3}
and~\eqref{LE:X5}, we infer from
estimate~\eqref{energygluingest} that
\begin{equation}\label{DIAMOND:EQ}\begin{split}&
\Big|E_{(T_1,T_2)^2}(P)- E_{(T_1,x_0)^2}(Q_{\eta,\mu})\\&-E_{(x_0,T_2)^2}(R)+2[ Q_{\zeta_1,\zeta_2}^\sharp]^2_{K,(x_0-\beta,x_0)\times (x_0,x_0+\beta)}\Big|\leq\diamondsuit,\end{split}
\end{equation}
where we use the notation ``$\diamondsuit$''
to denote quantities that are as small as we wish when~$\rho$
is sufficiently small. The smallness of~$\rho$
depends on~$\mu$,
$Q_{\zeta_1,\zeta_2}^\sharp$
and
the structural constants
of the kernel and the potential, but it is independent on~$\eta$.

We remark that, in virtue of~\eqref{Lipstrajecrem}, we also
have that
\begin{equation}\label{gliongeqrem3Qeta} \begin{split}&
\Big|E_{(T_1,T_2)^2}(Q_{\eta,\mu})- E_{(T_1,x_0)^2}(Q_{\eta,\mu}) \\
& \qquad-E_{(x_0,T_2)^2}(Q_{\eta,\mu})+2[Q_{\zeta_1,\zeta_2}^\sharp]^2_{K,(x_0-\beta,x_0)\times (x_0,x_0+\beta)}\Big|\leq\diamondsuit.
\end{split}\end{equation}
}\end{remark}

\section{Stickiness properties of energy minimizers}\label{MI100}

In this section we show that the minimizers have the tendency to
stick at the minima of~$W$ once they arrive sufficiently close to them.
For this, we recall that~$r\in(0,\,\min\{\delta_0,r_0\}]$
(with~$r_0$ and~$\delta_0$
as in~\eqref{KERNEL} and~\eqref{POT:GROW}, respectively)
has been fixed at the beginning of Section~\ref{S:4}.

\begin{proposition}\label{STICA888}
Let~$\rho\in(0,1)$.
Let~$Q_{\eta,\mu}$ be as in Lemma~\ref{78:77:76}.
Let~$x_1$, $x_2\in\R$ be clean points for~$(\rho,Q_{\eta,\mu})$,
according to Definition~\ref{DEF:CLEAN:PT},
with~$x_2\ge x_1+4$, and
\begin{equation}\label{ALKKA} \max_{i=1,2} 
|Q_{\eta,\mu}(x_i)-\zeta|\le\rho,\end{equation}
for some~$\zeta\in\mathcal{Z}$. 
Then
\begin{equation}
\label{STIMA:AA1}\begin{split}&\frac{\eta}{2}\int_{x_1}^{x_2}|\dot{Q}_{\eta,\mu}(x)|^2\,dx+ 
\frac\mu2\int_{x_1}^{x_2} |Q_{\eta,{\mu}}(x)-Q^\sharp_{\zeta_1,\zeta_2}(x)|^2\,dx
\\&\qquad+
\frac14[Q_{\eta,\mu}]^2_{K,(x_1,x_2)^2}+
\int_{x_1}^{x_2} a(x)\,W(Q_{\eta,\mu}(x))\,dx\le \diamondsuit
,\end{split}\end{equation}
with~$\diamondsuit$ as small as we wish if~$\rho$ is suitably small
(the smallness of~$\rho$
depends on $\mu$, $Q_{\zeta_1,\zeta_2}^\sharp$,
and
on structural constants, but it is independent on~$\eta$).

Moreover,
\begin{equation}
\label{STIMA:AA2}
{\mbox{$|Q_{\eta,\mu}(x)-\zeta|\le \displaystyle\frac{r}2$ for
every~$x\in [x_1,x_2]$.}}\end{equation}
\end{proposition}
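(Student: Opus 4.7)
The plan is to establish both claims via a comparison argument against a carefully chosen competitor $P$ obtained by modifying $Q_{\eta,\mu}$ on the interval $[x_1, x_2]$ so that it essentially coincides with the equilibrium $\zeta$, invoking the minimality of $Q_{\eta,\mu}$ in the class $\Sigma$ provided by Lemma~\ref{gammagammatilde}. Concretely, I would set $P = Q_{\eta,\mu}$ on $\R \setminus (x_1, x_2)$, let $P$ interpolate linearly from $Q_{\eta,\mu}(x_i)$ to $\zeta$ on unit intervals adjacent to each endpoint $x_i$, and take $P = \zeta$ on the remaining middle portion of $(x_1, x_2)$. Since \eqref{ALKKA} forces $|Q_{\eta,\mu}(x_i) - \zeta| \le \rho$, each interpolation has slope of order $\rho$, and the clean-interval property combined with Lemma~\ref{regularitycleaninterval} ensures the local energy of $P$ on $(x_1, x_2)$ is of size $O(\rho^2)$, hence $\diamondsuit$. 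Admissibility $P \in \Sigma$ is straightforward once one observes that, in view of~\eqref{DAFS823435rt10}, the equilibrium $\zeta$ must lie in $\{\zeta_1, \zeta_2\}$ (in the nearest-neighbor setting to which the proof is ultimately reduced), so the constraints $\Psi \le P \le \Phi$ propagate from $Q_{\eta,\mu}$.

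Unpacking the minimality inequality $I_{\eta,\mu}(Q_{\eta,\mu}) \le I_{\eta,\mu}(P)$, the terms that refer to $\R \setminus (x_1, x_2)$ cancel, the Dirichlet and potential contributions of $P$ inside $(x_1, x_2)$ are of size $\diamondsuit$, and the $L^2$-penalization term can be rendered $\diamondsuit$ by reshaping $P$ to follow $Q^\sharp_{\zeta_1,\zeta_2}$ on the fixed transition interval $[-1,1]$ whenever it meets $(x_1, x_2)$. For the nonlocal part, the gluing-type identities~\eqref{DIAMOND:EQ} and~\eqref{gliongeqrem3Qeta}, applied independently at the two clean points $x_1$ and $x_2$, reduce the global comparison to the local one up to a $\diamondsuit$ error:
\[
E_{\R^2}(Q_{\eta,\mu}) - E_{\R^2}(P) = E_{(x_1,x_2)^2}(Q_{\eta,\mu}) - E_{(x_1,x_2)^2}(P) + \diamondsuit.
\]
Using the definition~\eqref{Eenergy}, together with the fact that $E_{(x_1,x_2)^2}(P)$ differs from $-[Q^\sharp_{\zeta_1,\zeta_2}]^2_{K,(x_1,x_2)^2}$ by a $\diamondsuit$ amount (since $P$ is nearly constant on $(x_1, x_2)$), the resulting inequality rearranges into \eqref{STIMA:AA1}.

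For the pointwise bound \eqref{STIMA:AA2}, I would argue by contradiction: suppose $|Q_{\eta,\mu}(\bar x) - \zeta| > r/2$ for some $\bar x \in [x_1, x_2]$. The uniform H\"older regularity furnished by Corollary~\ref{QCalphaobstaclecor} then produces a subinterval $[y_1, y_2] \subset [x_1, x_2]$, with $y_2 - y_1$ bounded below by a positive constant depending only on $r$ and the structural data, on which $|Q_{\eta,\mu} - \zeta| \ge r/4$. Since $r \le \delta_0$, the quadratic growth \eqref{POT:GROW} of $W$ gives $W(Q_{\eta,\mu}) \ge c_0 (r/4)^2$ on $[y_1, y_2]$, hence
\[
\int_{x_1}^{x_2} a(x)\, W(Q_{\eta,\mu}(x))\,dx \;\ge\; \underline{a}\, c_0 \left(\tfrac{r}{4}\right)^{\!2} (y_2 - y_1) \;=:\; c_\star(r) \;>\; 0,
\]
which contradicts \eqref{STIMA:AA1} once $\rho$ is chosen small enough that $\diamondsuit < c_\star(r)$.

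The main obstacle will be the bookkeeping in the construction of $P$: one must simultaneously ensure $P \in \Sigma$, control the $L^2$-penalization (whose handling depends on whether $(x_1, x_2)$ meets the fixed transition interval $[-1, 1]$ of $Q^\sharp_{\zeta_1,\zeta_2}$), and estimate the nonlocal cross-interactions between $(x_1, x_2)$ and its complement through the gluing formulas of Section~\ref{sec:CI}. It is precisely the clean-point regularity of Lemma~\ref{regularitycleaninterval} that makes those cross-interaction errors absorbable into $\diamondsuit$, \emph{independently} of the perturbative parameter $\eta$, thereby yielding the uniform estimate stated in the proposition.
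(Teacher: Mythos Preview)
Your overall strategy---construct a competitor $P$ that equals $\zeta$ on most of $(x_1,x_2)$, exploit minimality, use the gluing formulas \eqref{DIAMOND:EQ} and \eqref{gliongeqrem3Qeta} for the nonlocal part, then derive \eqref{STIMA:AA2} by contradiction via the uniform H\"older bound---matches the paper's. However, there is a genuine gap in how you handle the $L^2$-penalization term and the admissibility of $P$.

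Your proposed fix, ``reshaping $P$ to follow $Q^\sharp_{\zeta_1,\zeta_2}$ on $[-1,1]$ whenever it meets $(x_1,x_2)$,'' does not work. Suppose $x_1<-1<1<x_2$ and $\zeta=\zeta_2$. To keep $\int|P-Q^\sharp_{\zeta_1,\zeta_2}|^2$ small on $(x_1,-1)$ you would need $P\approx\zeta_1$ there, but $P$ must also match $Q_{\eta,\mu}(x_1)\approx\zeta_2$ at the left endpoint; this forces a jump of order $|\zeta_1-\zeta_2|$, destroying the $H^1$ and H\"older control of the competitor. In the same scenario $P\approx\zeta_2$ on $(-\infty,b_1]$ would violate $P\le\Phi$ (since $\Phi\approx\zeta_1+r$ there), so $P\notin\Gamma(b_1,b_2)$ and minimality cannot be invoked. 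Your claim that admissibility ``propagates from $Q_{\eta,\mu}$'' is therefore unjustified.

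The paper closes this gap by a preliminary step that you omit: using the penalization bound $\|Q_{\eta,\mu}-Q^\sharp_{\zeta_1,\zeta_2}\|_{L^2(\R)}\le\kappa/\mu$ from \eqref{LE:X5}, one first proves that either $x_2\le-2$ or $x_1\ge2$, and then that $\zeta$ is the equilibrium on that side (e.g.\ $\zeta=\zeta_2$ when $x_1\ge2$). The argument is simply that if $Q_{\eta,\mu}$ were $\rho$-close to the ``wrong'' equilibrium on a clean interval of length of order $|\log\rho|$ sitting where $Q^\sharp_{\zeta_1,\zeta_2}$ equals the other one, then $\|v_{\eta,\mu}\|_{L^2}^2\ge\const\,|\log\rho|$, contradicting \eqref{LE:X5} for $\rho$ small in terms of $\mu$---which is precisely why the proposition allows the smallness of $\rho$ to depend on $\mu$. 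Once this localization is established, $Q^\sharp_{\zeta_1,\zeta_2}\equiv\zeta$ on $(x_1,x_2)$, the $L^2$-penalization of $P$ is trivially $O(\rho^2)$, $P\in\Gamma(b_1,b_2)$ automatically, and the remainder of your argument (including the proof of \eqref{STIMA:AA2}) goes through as written.
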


\begin{proof} First of all, we observe that, if~$\rho$
is sufficiently small with respect to~$\mu$, then
\begin{equation}\label{STan7234}
{\mbox{either $x_2\le-2$ or $x_1\ge2$.}}
\end{equation}
Indeed, suppose not, then~$x_2>-2$ and~$x_1<2$.
Hence, if~$x\in\left[ x_2+3,x_2+\frac{|\log\rho|}2\right]$,
we have that~$x\ge1$ and thus~$Q^\sharp_{\zeta_1,\zeta_2}(x)=\zeta_2$.
Consequently, recalling that~$|Q_{\eta,{\mu}}(x)-\zeta|\le\rho$
for all~$x\in\left[ x_2+3,x_2+\frac{|\log\rho|}2\right]$, thanks to
Definition~\ref{DEF:CLEAN}, we find that
$$ |Q_{\eta,{\mu}}(x)-Q^\sharp_{\zeta_1,\zeta_2}(x)|=
|Q_{\eta,{\mu}}(x)-\zeta_2|\ge|\zeta_2-\zeta|-\rho$$
for all~$x\in\left[ x_2+3,x_2+\frac{|\log\rho|}2\right]$.

Similarly, we have that
$$ |Q_{\eta,{\mu}}(x)-Q^\sharp_{\zeta_1,\zeta_2}(x)|\ge|\zeta_1-\zeta|-\rho$$
for all~$x\in\left[ x_1-\frac{|\log\rho|}2,x_1-3\right]$.

In particular, since either~$\zeta\ne\zeta_1$ or~$\zeta\ne\zeta_2$, we conclude that
$$ |Q_{\eta,{\mu}}(x)-Q^\sharp_{\zeta_1,\zeta_2}(x)|\ge c_0,$$
for some~$c_0>0$, for all~$x$ belonging to
an interval of length~$\frac{|\log\rho|}2-3\ge
\frac{|\log\rho|}4$.

For these reasons, we conclude that
$$ \|Q_{\eta,{\mu}}-Q^\sharp_{\zeta_1,\zeta_2}\|_{L^2(\R)}^2\ge
\frac{c_0^2\,|\log\rho|}4.
$$
This and~\eqref{LE:X5} yield that
$$
\frac{\kappa^2}{\mu^2}\ge\|Q_{\eta,{\mu}}-Q^\sharp_{\zeta_1,\zeta_2}\|_{L^2(\R)}^2\ge
\frac{c_0^2\,|\log\rho|}4,
$$
which is a contradiction if~$\rho$ is sufficiently small (possibly in dependence of~$\mu$).
This proves~\eqref{STan7234}.

Hence, in the light of~\eqref{STan7234}, without loss of generality we can now suppose
that
\begin{equation}\label{ujninsdfl234r}
x_1\ge2.\end{equation} We claim that, in this case,
\begin{equation}\label{sonufj123}
\zeta=\zeta_2.
\end{equation}
Indeed, suppose not. Then~$|\zeta-\zeta_2|\ge \hat{c}$, for some~$\hat{c}>0$.
Also, if~$x\ge x_1$ we have that~$Q^\sharp_{\zeta_1,\zeta_2}(x)=\zeta_2$,
and consequently, for all~$x\in\left[ x_1,x_1+\frac{|\log\rho|}2\right]$,
$$ |Q_{\eta,{\mu}}(x)-Q^\sharp_{\zeta_1,\zeta_2}(x)|=|Q_{\eta,{\mu}}(x)-\zeta_2|\ge
|\zeta-\zeta_2|-|Q_{\eta,{\mu}}(x)-\zeta|\ge\hat{c}-\rho\ge\frac{\hat{c}}2,
$$
if~$\rho$ is small enough.

As a result,
$$ \|Q_{\eta,{\mu}}-Q^\sharp_{\zeta_1,\zeta_2}\|_{L^2(\R)}^2\ge
\frac{\hat{c}^2\,|\log\rho|}8.$$
This and~\eqref{LE:X5} yield that
$$
\frac{\kappa^2}{\mu^2}\ge\frac{\hat{c}^2\,|\log\rho|}8,$$
from which we obtain a contradiction if~$\rho$ is small enough.
The proof of~\eqref{sonufj123} is thereby complete.

Now, in light of~\eqref{sonufj123}, we define
$$ P(x):= \left\{
\begin{matrix}
Q_{\eta,\mu}(x) & {\mbox{ if }} x\in (-\infty, x_1),\\
Q_{\eta,\mu}(x_1)(x_1+1-x)+\zeta_2(x-x_1) & {\mbox{ if }} x\in [x_1,x_1+1],\\
\zeta_2 & {\mbox{ if }} x\in (x_1+1, x_2-1),\\
Q_{\eta,\mu}(x_2)(x-x_2+1)+\zeta_2(x_2-x) & {\mbox{ if }} x\in [x_2-1,x_2],\\
Q_{\eta,\mu}(x)& {\mbox{ if }} x\in (x_2,+\infty).
\end{matrix}
\right. $$
In this way, we have that
\begin{equation}\label{normP}
[P]_{C^{0,1}(x_1,x_2)}\leq\rho.
\end{equation}
Also, in view of~\eqref{ujninsdfl234r},
\begin{equation}\label{797jhneutr0000}\begin{split}
\int_{x_1}^{x_2} |P(x)-Q^\sharp_{\zeta_1,\zeta_2}(x)|^2\,dx&=
\int_{x_1}^{x_2} |P(x)-\zeta_2|^2\,dx\\&=
\int_{[x_1,x_1+1]\cup[x_2-1,x_2]} |P(x)-\zeta_2|^2\,dx.
\end{split}\end{equation}
Moreover, we observe that, if~$x\in(x_1,x_2)$, then
\begin{equation}\label{89iok66670a0}
\begin{split}
&|P(x)-\zeta_2|\\
\le\;& \sup_{y\in(x_1,x_1+1)} |Q_{\eta,\mu}(x_1)(x_1+1-y)
+\zeta_2(y-x_1)
-\zeta_2|\\&\qquad\qquad+
\sup_{y\in(x_2-1,x_2)} |Q_{\eta,\mu}(x_2)(y-x_2-1)
+\zeta_2(x_2-y)
-\zeta_2|\\ \le\;&
|Q_{\eta,\mu}(x_1)-\zeta_2|+|Q_{\eta,\mu}(x_2)-\zeta_2|\\
\le\;& 2\rho,
\end{split}\end{equation}
thanks to~\eqref{ALKKA} and~\eqref{sonufj123}.

Therefore, plugging this information into~\eqref{797jhneutr0000},
$$ \int_{x_1}^{x_2} |P(x)-Q^\sharp_{\zeta_1,\zeta_2}(x)|^2\,dx\le2\rho^2.$$
For that reason,
\begin{equation}\label{lasndf706574}\begin{split}& \int_{\R} |P(x)-Q^\sharp_{\zeta_1,\zeta_2}(x)|^2\,dx-
\int_{\R} |Q_{\eta,{\mu}}(x)-Q^\sharp_{\zeta_1,\zeta_2}(x)|^2\,dx\\
=\;&
\int_{x_1}^{x_2} |P(x)-Q^\sharp_{\zeta_1,\zeta_2}(x)|^2\,dx-
\int_{x_1}^{x_2} |Q_{\eta,{\mu}}(x)-Q^\sharp_{\zeta_1,\zeta_2}(x)|^2\,dx\\
\le\;&2\rho^2-\int_{x_1}^{x_2} |Q_{\eta,{\mu}}(x)-Q^\sharp_{\zeta_1,\zeta_2}(x)|^2\,dx.\end{split}\end{equation}
Also,
\begin{equation}\label{gherugh-8436h}
{\mbox{if $x,y\in(x_1,x_2)$, then
$|P(x)-P(y)|\leq 2\rho$.}}\end{equation} 
Now, let us estimate $[P]^2_{K,(x_1,x_2)^2}$. We have 
\begin{equation}\label{linearest}\begin{split}
[P]^2_{K,(x_1,x_2)^2}&=[P]^2_{K,(x_1,x_1+1)\times(x_1,x_2)}\\&\quad+[P]^2_{K,(x_1+1,x_2-1)\times(x_1,x_2)}+[P]^2_{K,(x_2-1,x_2)\times(x_1,x_2)}.
\end{split}
\end{equation}
Using~\eqref{KERNEL}, \eqref{normP} and~\eqref{gherugh-8436h},
%letting
%$$x_\rho:=\min\left\{x_1+\frac{1}{\rho},x_2\right\},$$ 
we see that 
\begin{equation}\label{linearest2}\begin{split}
& [P]^2_{K,(x_1,x_1+1)\times(x_1,x_2)}\\=\;&
\int_{x_1}^{x_1+1}\int_{x_1}^{x_1+2}|P(x)-P(y)|^2\,K(x-y)\,dx\,dy\\&\qquad
+\int_{x_1}^{x_1+1}\int_{x_1+2}^{x_2}|P(x)-P(y)|^2\,K(x-y)
\,dx\,dy\\
\le\;&\Theta_0 \rho^2 \int_{x_1}^{x_1+1}\int_{x_1}^{x_1+2}
|x-y|^{1-2s}\,dx\,dy
+4\Theta_0 \rho^2\int_{x_1}^{x_1+1}\int_{x_1+2}^{x_2}
|x-y|^{-1-2s}\,dx\,dy\\
\leq\; & \kappa\rho^2 \\
=\;&\diamondsuit.
\end{split}
\end{equation}
Similarly,
\begin{equation}\label{linearest3}
[P]^2_{K,(x_2-1,x_2)\times(x_1,x_2)}\leq \diamondsuit.
\end{equation}
Finally, making again use of~\eqref{KERNEL},
\eqref{normP} and~\eqref{gherugh-8436h}, we compute
\begin{equation}\label{linearest4}\begin{split}
& [P]^2_{K(x_1+1,x_2-1)\times(x_1,x_2)}\\
=\;&\int_{x_1+1}^{x_2-1}\int_{x_1}^{x_1+1}|P(x)-P(y)|^2\,K(x-y)
\,dx\,dy\\
&\quad
+\int_{x_1+1}^{x_2-1}\int_{x_2-1}^{x_2}|P(x)-P(y)|^2\,K(x-y)\,dx\,dy\\
=\;&\int_{x_1+1}^{x_1+2}\int_{x_1}^{x_1+1}|P(x)-P(y)|^2
\,K(x-y)\,dx\,dy\\
&\quad
+\int_{x_1+2}^{x_2-1}\int_{x_1}^{x_1+1}|P(x)-P(y)|^2
\,K(x-y)\,dx\,dy\\
&\quad +\int_{x_1+1}^{x_2-2}\int_{x_2-1}^{x_2}|P(x)-P(y)|^2
\,K(x-y)\,dx\,dy\\
&\quad
+\int_{x_2-2}^{x_2-1}\int_{x_2-1}^{x_2}|P(x)-P(y)|^2\,K_m(x-y)
\,dx\,dy\\
\leq\;& \kappa \rho^2\ \Big(\int_{x_1+1}^{x_1+2}
\int_{x_1}^{x_1+1}|x-y|^{1-2s}\,dx\,dy+
\int_{x_2-2}^{x_2-1}\int_{x_2-1}^{x_2}|x-y|^{1-2s}\,dx\,dy
\\&\qquad\quad
+\int_{x_1+2}^{x_2-1}\int_{x_1}^{x_1+1}|x-y|^{-1-2s}
\,dx\,dy+\int_{x_1+1}^{x_2-2}\int_{x_2-1}^{x_2}
|x-y|^{-1-2s}\,dx\,dy\Big)\\
\leq\;&\kappa\rho^2\\
=\;&\diamondsuit.
\end{split}
\end{equation}
Therefore, collecting estimates \eqref{linearest}, \eqref{linearest2},
\eqref{linearest3}
and~\eqref{linearest4}, we get
\begin{equation}\label{linearest5}
[P]^2_{K,(x_1,x_2)^2}\leq \diamondsuit.
\end{equation}
Combining~\eqref{DIAMOND:EQ}
(applied here twice, with~$x_0:=x_1$ and~$x_0:=x_2$)
with~\eqref{linearest5} yields, for~$\beta$ as
in~\eqref{098idkscf7gihoj777},
\begin{equation}\label{97yihokjb6666666:000}\begin{split}
E_{\R^2}(P)&\le E_{(-\infty,x_1)^2}(Q_{\eta,\mu})+E_{(x_1,x_2)^2}(P)+E_{(x_2,+\infty)^2}(Q_{\eta,\mu})+\diamondsuit
\\&\qquad -2[ Q_{\zeta_1,\zeta_2}^\sharp]_{K,(x_1-\beta,x_1)\times(x_1,x_1+\beta)}^2-2[
Q_{\zeta_1,\zeta_2}^\sharp]_{K,(x_2-\beta,x_2)\times(x_2,x_2+\beta)}^2
\\&=E_{(-\infty,x_1)^2}(Q_{\eta,\mu})+E_{(x_2,+\infty)^2}(Q_{\eta,\mu})
+\diamondsuit
-[Q_{\zeta_1,\zeta_2}^\sharp]_{K,(x_1,x_2)^2}^2
\\&\qquad -2[Q_{\zeta_1,\zeta_2}^\sharp
]_{K,(x_1-\beta,x_1)\times(x_1,x_1+\beta)}^2-2[ Q_{\zeta_1,\zeta_2}^\sharp]_{K,(x_2-\beta,x_2)\times(x_2,x_2+\beta)}^2.
\end{split}\end{equation}
On the other hand, by \eqref{gliongeqrem3Qeta}
(again applied here twice, with~$x_0:=x_1$ and~$x_0:=x_2$),
we have that
\begin{equation}\label{97yihokjb6666666:000bis}\begin{split}
E_{\R^2}(Q_{\eta,\mu})&\ge  E_{(-\infty,x_1)^2}(Q_{\eta,\mu})+E_{(x_1,x_2)^2}(Q_{\eta,\mu})+E_{(x_2,+\infty)^2}(Q_{\eta,\mu})+\diamondsuit
\\&\qquad -2[ Q_{\zeta_1,\zeta_2}^\sharp]_{K,(x_1-\beta,x_1)\times(x_1,x_1+\beta)}^2-2[ Q_{\zeta_1,\zeta_2}^\sharp]_{K,(x_2-\beta,x_2)\times(x_2,x_2+\beta)}^2.
\end{split}\end{equation}
Subtracting \eqref{97yihokjb6666666:000bis}
to~\eqref{97yihokjb6666666:000}, we get
\begin{equation}\label{97yihokjb6666666:000bisbis}
E_{\R^2}(P)-E_{\R^2}(Q_{\eta,\mu})\leq -[Q_{\eta,\mu}]_{K(x_1,x_2)^2}^2+\diamondsuit.
\end{equation}
In addition, by~\eqref{POT:GROW}
and~\eqref{89iok66670a0}, we see that if~$x\in (x_1,x_2)$
then~$W(P(x))\le 4C_0\rho^2$.
Using this and the fact that~$W(P(x))=W(\zeta)=0$
if~$x\in(x_1+1,x_2-1)$, we conclude that
$$ \int_{x_1}^{x_2} W(P(x))\,dx=
\int_{x_1}^{x_1+1} W(P(x))\,dx+\int_{x_2-1}^{x_2} W(P(x))\,dx
\le 8C_0\,\rho^2.$$
Thus, by the minimality of~$Q_{\eta,\mu}$ for~$I_{\eta,\mu}$ 
(defined in~\eqref{01edkIETA}), \eqref{lasndf706574}
and~\eqref{97yihokjb6666666:000bisbis},
\begin{eqnarray*}
0&\le&I_{\eta,\mu}(P)-I_{\eta,\mu}(Q_{\eta,\mu})\\ &\le& 
\eta\rho-\frac{\eta}{2}\int_{x_1}^{x_2}|\dot{Q}_{\eta,\mu}(x)|^2\,dx
+\mu\rho^2-\frac\mu2\int_{x_1}^{x_2} |Q_{\eta,{\mu}}(x)-Q^\sharp_{\zeta_1,\zeta_2}(x)|^2\,dx
\\&&\qquad
-\frac14[Q_{\eta,\mu}]_{K,(x_1,x_2)^2}^2-\int_{x_1}^{x_2} a(x)\,W(Q_{\eta,\mu}(x))\,dx+\diamondsuit,
\end{eqnarray*}
which proves~\eqref{STIMA:AA1}.

Now we prove~\eqref{STIMA:AA2}.
For this,
we assume by contradiction
that there exists~$\tilde x\in[x_1,x_2]$
such that~$|Q_{\eta,\mu}(\tilde x)-\zeta|> r/2$. 

By  Corollary~\ref{QCalphaobstaclecor},  we
have that~$Q_{\eta,\mu}$ is H\"{o}lder continuous (with uniform bound).
Hence, since~$|Q_{\eta,\mu}(x_1)-\zeta|\le\rho <r/2$,
we obtain that there exists~$\hat x\in[x_1,x_2]$
such that
\begin{equation}\label{9asdwAA}
|Q(\hat x)-\zeta|= \frac{r}2.\end{equation}
In particular, there exists~$\ell$ independent of~$\eta$
and~$\mu$ such that,
for any~$x\in [ \hat x -\ell , \hat x + \ell]$ and~$\alpha\in(0,2s)$, 
$$ |Q_{\eta,\mu}(x)-Q_{\eta,\mu}(\hat x)|\le \kappa\,|x-\hat x|^{\alpha}\le \frac{r}{4}.$$
This and~\eqref{9asdwAA}
imply that, if~$x\in [ \hat x -\ell , \hat x + \ell]$,
$$ Q_{\eta,\mu}(x) \in \overline{ B_{3r/4} (\zeta)\setminus B_{r/4}(\zeta)}$$
and thus
$$ {\rm dist}\,\big(Q_{\eta,\mu}(x),\mathcal{Z}\big)\ge \frac{r}{4},$$
for all~$x\in [ \hat x -\ell, \hat x + \ell]$.
This,~\eqref{POT:GROW} and~\eqref{POT:GROW2} give that
\begin{equation*}
\int_{\hat x -\ell}^{\hat x + \ell} a(x) W(Q_{\eta,\mu}(x))\,dx\ge \underline{a}
\int_{\hat x -\ell}^{\hat x + \ell} W(Q_{\eta,\mu}(x))\,dx \ge
2\ell\,\underline{a} \,
\inf_{ {\rm dist}\, (\tau,\; \mathcal{Z}) \ge r/4 } W(\tau)=:c.\end{equation*}
Hence, noticing that~$(\hat x-\ell,\hat x+\ell)\subseteq (x_1,x_2)$,
we obtain that
$$ \int_{x_1}^{x_2} a(x) W(Q_{\eta,\mu}(x))\,dx\ge c,$$
and this is in contradiction with~\eqref{STIMA:AA1}
for small~$\rho$. Then, the proof
of~\eqref{STIMA:AA2} is now complete.
\end{proof}

\section{Reduction to the case in which~${\mathcal{Z}}=\{\zeta_1,\zeta_2\}$}\label{REDUSE}

Now we remark that it is enough to prove Theorem~\ref{MAIN}
under the additional assumption that
\begin{equation}\label{67:Asdf3456}
{\mathcal{Z}}=\{\zeta_1,\zeta_2\}.
\end{equation}
Indeed, let~$\zeta_1\ne\zeta_2\in{\mathcal{Z}}$ be
nearest neighbors according to Definition~\ref{PRIMICI},
and suppose, without loss of generality, that~$\zeta_1<\zeta_2$.
Then, we can find~$\delta>0$ sufficiently small such that~$
{\mathcal{Z}}^\star:=
[\zeta_1-\delta,
\zeta_2+\delta]\cap{\mathcal{Z}}=\{\zeta_1,\zeta_2\}$.
We also consider a potential~$W^\star$ which coincides with~$W$
in~$[\zeta_1-\delta,
\zeta_2+\delta]$ and remains strictly positive outside~$[\zeta_1-\delta,
\zeta_2+\delta]$.

In this way, the potential~$W^\star$ satisfies the same structural
assumptions of~$W$ for a set of equilibria~${\mathcal{Z}}^\star$
as in~\eqref{67:Asdf3456}. Then, if
Theorem~\ref{MAIN} holds true under the additional assumption~\eqref{67:Asdf3456},
we obtain a heteroclinic orbit~$Q^\star:\R\to[\zeta_1,\zeta_2]$ connecting~$\zeta_1$ to~$\zeta_2$,
which is a solution of~${\mathcal{L}}Q^\star+a\,{W^\star}\,'(Q^\star)=0$.
But then, since~$W^\star=W$ in the range of~$Q^\star$,
we obtain that~$Q^\star$ is also a solution of~\eqref{EQUAZIONE},
thus giving Theorem~\ref{MAIN} in its full generality.

For that reason, from now on, we assume without loss of generality
that condition~\eqref{67:Asdf3456}
is also satisfied.

\section{Unconstrained minimization for a perturbed problem}\label{UNCO}

Here, recalling the setting of Section~\ref{S:4},
we show that if~$b_1$ and~$b_{2}$ are sufficiently separated, then the constrained
minimizer, whose existence has been established in Lemma~\ref{78:77:76},
is in fact an unconstrained minimizer. The idea for this is
that the ``excursion'' of the minimizer will occur
at the points ``favored by the wells of~$a$''
(recall the non-degeneracy condition in~\eqref{a nondegenerate}), which can be placed
suitably far from the constraints.

Under the additional assumption in~\eqref{67:Asdf3456},
we consider the minimizer~$Q_{\eta,\mu}$
for~$I_{\eta,\mu}$
as given  in Lemma~\ref{78:77:76}. In this setting, we have:

\begin{proposition}\label{FREE-eta}
There exists a structural constant~$\mu_0>0$
such that if~$\mu\in[0,\mu_0]$ the following statement holds true.

There exist~$b_1$, $b_2\in\R$
and
\begin{equation}\label{PASerf6465-1}
Q_{\eta,\mu}^\star\in\Gamma(b_1,b_2)\end{equation} such that
\begin{equation}\label{6cashjdcocn}
Q_{\eta,\mu}^\star:\R\to
\big[\min\{\zeta_1,\zeta_2\},\max\{\zeta_1,\zeta_2\}\big]\end{equation} and
\begin{equation}
\label{LE:X1-FREE} I_{\eta,\mu}(Q_{\eta,\mu}^\star)\le I_{\eta,\mu}(Q) {\mbox{ for all }} Q {\mbox{ s.t. }}
Q-Q_{\zeta_1,\zeta_2}^\sharp\in H^1(\R)
.\end{equation}
Also, letting~$v_{\eta,\mu}^\star:=Q_{\eta,\mu}^\star-Q_{\zeta_1,\zeta_2}^\sharp$, we see that
\begin{eqnarray}
\label{LE:X2-FREE} && [v^\star_{\eta,\mu}]_{H^1(\R)}\le\frac{\kappa}{\sqrt{\eta{\mu}}},\\
\label{LE:X3-FREE} && [v^\star_{\eta,\mu}]_{K,\R\times\R}\le\frac\kappa{\sqrt{\mu}},\\
\label{LE:X4-FREE} && \|v_{\eta,\mu}^\star\|_{L^\infty(\R)}\le\kappa,\\
\label{LE:X5-FREE}&&
\|v_{\eta,\mu}^\star\|_{L^2(\R)}\le\frac\kappa{{\mu}} \\
\label{LE:X6-FREE} {\mbox{and }}&&
\|v_{\eta,\mu}^\star\|_{C^{0,\alpha}(\R)}\le\kappa \text{ for all }\alpha\in (0,2s),
\end{eqnarray}
for some~$\kappa>0$, which
possibly depends on~$Q_{\zeta_1,\zeta_2}^\sharp$
and on structural constants.
\end{proposition}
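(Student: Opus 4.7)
The strategy is to obtain $Q_{\eta,\mu}^\star$ as a subsequential limit of the constrained minimizers of Lemma~\ref{78:77:76} when the endpoints of the constraint region are sent to $\mp\infty$, and to establish the free minimality through a cut-off comparison with arbitrary $H^1$-competitors. For each $B>0$, set $Q^{(B)}$ to be the minimizer in $\Gamma(-B,B)$ given by Lemma~\ref{78:77:76} and let $v^{(B)}:=Q^{(B)}-Q_{\zeta_1,\zeta_2}^\sharp$. The estimates \eqref{LE:X2}--\eqref{LE:X5} together with the Hölder bound \eqref{ALSO5} of Corollary~\ref{QCalphaobstaclecor} are all independent of $B$, so an Ascoli--Arzelà argument combined with weak compactness in $H^1(\R)$ and in the Hilbert space induced by the seminorm $[\cdot]_{K,\R\times\R}$ yields, along a subsequence $B_n\to+\infty$, a limit $v^\star$ to which $v^{(B_n)}$ converges locally uniformly, weakly in $H^1(\R)$, and weakly in $[\cdot]_{K,\R\times\R}$. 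Setting $Q_{\eta,\mu}^\star:=Q_{\zeta_1,\zeta_2}^\sharp+v^\star$, the bounds \eqref{LE:X2-FREE}--\eqref{LE:X6-FREE} follow from lower semicontinuity of the relevant norms, \eqref{6cashjdcocn} comes from \eqref{DAFS823435rt10} through the locally uniform convergence, and, since $v^\star\in H^1(\R)$ is continuous and vanishes at infinity, $Q_{\eta,\mu}^\star(x)\to\zeta_i$ as $x\to\pm\infty$; comparing with \eqref{Phidef}--\eqref{Psidef} then gives the strict inequalities $\Psi<Q_{\eta,\mu}^\star<\Phi$ on $(-\infty,b_1]\cup[b_2,+\infty)$ for $|b_1|$ and $b_2$ taken large enough (depending on $v^\star$), proving \eqref{PASerf6465-1}.

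To establish \eqref{LE:X1-FREE}, I fix an arbitrary competitor $\xi$ with $\xi-Q_{\zeta_1,\zeta_2}^\sharp\in H^1(\R)$ and $I_{\eta,\mu}(\xi)<+\infty$, choose $\chi_R\in C^\infty_c(\R;[0,1])$ equal to $1$ on $[-R,R]$ and supported in $[-R-1,R+1]$, and set $\tilde\xi_R:=Q_{\zeta_1,\zeta_2}^\sharp+\chi_R\,(\xi-Q_{\zeta_1,\zeta_2}^\sharp)$. Then $\tilde\xi_R\equiv\zeta_1$ on $(-\infty,-R-2]$ and $\tilde\xi_R\equiv\zeta_2$ on $[R+2,+\infty)$, so by \eqref{Phidef}--\eqref{Psidef} one has $\tilde\xi_R\in\Gamma(-B_n,B_n)$ whenever $B_n\ge R+3$; the constrained minimality of $Q^{(B_n)}$ then gives
\begin{equation*}
I_{\eta,\mu}(Q^{(B_n)})\le I_{\eta,\mu}(\tilde\xi_R).
\end{equation*}
Letting $n\to+\infty$, lower semicontinuity along the weak convergence of $v^{(B_n)}$ handles the Dirichlet, $L^2$-penalization and $K$-seminorm contributions, Fatou's lemma (combined with the pointwise limit supplied by local uniform convergence) handles the nonnegative potential term, and the linear mixed term $\B_{\R,\R}(\cdot,Q_{\zeta_1,\zeta_2}^\sharp)$ is continuous along the weak convergence by virtue of its Lemma~\ref{LEMMA1} bound; hence $I_{\eta,\mu}(Q_{\eta,\mu}^\star)\le I_{\eta,\mu}(\tilde\xi_R)$. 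Letting then $R\to+\infty$, $\chi_R(\xi-Q_{\zeta_1,\zeta_2}^\sharp)\to\xi-Q_{\zeta_1,\zeta_2}^\sharp$ strongly in $H^1(\R)$ and hence in $[\cdot]_{K,\R\times\R}$ via the embedding $H^1(\R)\hookrightarrow H^s(\R)$; dominated convergence gives the $L^2$ and potential terms (using $I_{\eta,\mu}(\xi)<+\infty$ and the quadratic control \eqref{POT:GROW} near the equilibria), and Lemma~\ref{LEMMA1} controls the cross-term, so $I_{\eta,\mu}(\tilde\xi_R)\to I_{\eta,\mu}(\xi)$, completing the proof.

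The chief difficulty is the passage to the limit $n\to+\infty$, and more precisely the continuity of the linear mixed term $\B_{\R,\R}(v^{(B_n)},Q_{\zeta_1,\zeta_2}^\sharp)$: because the kernel $K$ is singular on the diagonal and only mildly decaying at infinity, the obvious Cauchy--Schwarz estimate would produce a divergent bound, and one must instead invoke the tail-and-seminorm decomposition of Lemma~\ref{LEMMA1}, which is precisely where the restriction $s>1/4$ of \eqref{RANGE} enters through \eqref{00e:02eifrjgjeo38}. The threshold $\mu_0$ appears to guarantee that the $L^2$-bound $\|v^{(B)}\|_{L^2(\R)}\le\kappa/\mu$ of Lemma~\ref{78:77:76} remains of manageable size, and so that, should the clean-interval/sticking machinery of Sections~\ref{sec:CI}--\ref{MI100} be invoked as a back-up to localize the transition of $Q_{\eta,\mu}^\star$ independently of $\eta$, the structural constants in Lemmas~\ref{CLEAN:LEMMA} and~\ref{STICA888} stay bounded.
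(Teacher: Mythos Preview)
Your argument is essentially correct for the proposition as literally stated, but it follows a genuinely different route from the paper's. The paper does \emph{not} send the constraint endpoints to infinity. Instead it fixes $b_1:=m_1$ and $b_2:=m_2$ once and for all (the points supplied by the non-degeneracy hypothesis~\eqref{a nondegenerate}), takes $Q_{\eta,\mu}^\star$ to be the constrained minimizer of Lemma~\ref{78:77:76} for this single choice, and then shows by contradiction that this minimizer never touches the obstacles $\Phi,\Psi$. The contradiction is obtained by a \emph{translation comparison}: if $Q_{\eta,\mu}^\star$ touched, say, the obstacle at some $x_1\le m_1$, one finds via Lemma~\ref{CLEAN:LEMMA} a nearby clean point where the trajectory has already reached a neighbourhood of $\zeta_2$; the shifted competitor $P(x):=Q_{\eta,\mu}^\star(x-\theta)$ is then admissible in $\Gamma(m_1,m_2)$, and comparing $I_{\eta,\mu}(P)$ with $I_{\eta,\mu}(Q_{\eta,\mu}^\star)$ produces, after rewriting the potential term as $\int[a(x+\theta)-a(x)]\,W(Q_{\eta,\mu}^\star)$, a strictly negative contribution $-\hat\gamma$ coming from~\eqref{a nondegenerate}, plus an error of size $\text{const}\cdot\mu$ from the $L^2$-penalization under translation. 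The threshold $\mu_0$ is precisely what makes this error smaller than $\hat\gamma/2$; it has nothing to do with controlling the size of the $L^2$-bound, as you suggest in your last paragraph.

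What each approach buys: your limiting argument is conceptually cleaner and, notably, makes no use of~\eqref{a nondegenerate} at all---the $\mu$-penalization alone localizes the transition, and you do not actually need any smallness of $\mu$. The paper's approach, by contrast, yields $b_1,b_2$ as \emph{structural constants}, independent of both $\eta$ and $\mu$. In your proof $b_1,b_2$ are chosen a posteriori from the decay of $v^\star\in H^1(\R)$, and that decay is controlled only through the $H^1$-bound $\kappa/\sqrt{\eta\mu}$, hence depends on $\eta$. This is harmless for the proposition as written, but it breaks the downstream argument: in Section~\ref{S023fhYU0193933} one sends $\eta\searrow0$ and needs the limit $Q_\mu^\star$ to lie in $\Gamma(b_1,b_2)$ for a \emph{fixed} pair, in order eventually to read off the asymptotics~\eqref{0LIMI} via Lemma~\ref{LE121} (and recall from Appendix~\ref{APPB} that once the $H^1$-bound is lost, membership in $H^s$ with $s\le\tfrac12$ gives no decay at infinity). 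So the non-degeneracy condition and the translation trick are not ornamental here: they are what makes the location of the constraints survive the vanishing-viscosity limit.
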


\begin{proof} We stress that the main difference between~\eqref{LE:X1} and~\eqref{LE:X1-FREE}
is that the competitors in~\eqref{LE:X1-FREE} do not need to be in~$\Gamma(b_1,b_2)$
and so~$Q_{\eta,\mu}^\star$ is a free minimizer. The proof of Proposition~\ref{FREE-eta} is a slight modification of the proof of Theorem~9.4
in~\cite{MR3594365},
and we refer to it   for more details.

Let~$Q_{\eta,\mu}^\star$ be as in Lemma~\ref{78:77:76} and 
let $v_{\eta,\mu}^\star:=Q_{\eta,\mu}^\star- Q_{\zeta_1,\zeta_2}^\sharp$. Then by  Lemma~\ref{78:77:76}
and Corollary~\ref{QCalphaobstaclecor}  
we have that~$v_{\eta,\mu}^\star$ satisfies~\eqref{LE:X2-FREE}--\eqref{LE:X6-FREE}. 

To prove~\eqref{LE:X1-FREE},  
we fix~$\rho\in(0,r)$, to be taken sufficiently
small, possibly in dependence of~$\mu$,
and we set
$$b_1=m_1\quad\text{and}\quad b_2=m_2,$$
with $m_1,m_2$ given by~\eqref{a nondegenerate}. 
To prove Proposition \ref{FREE-eta}, we want to show that $Q_{\eta,\mu}^\star$ does not touch the constraints of $\Gamma(b_1,b_2)$. Assume by contradiction that 
\begin{equation}\label{contradicexthm}
\text{there exists }x_1\leq b_1=m_1
\text{ such that either }Q_{\eta,\mu}^\star(x_1)=
\Phi(x_1)\text{ or }Q_{\eta,\mu}^\star(x_1)=\Psi(x_1),\end{equation}
the other case being similar.
In particular, by~\eqref{Phidef}
and~\eqref{Psidef}, we have that~$
|Q_{\eta,\mu}^\star(x_1)-\zeta_1|\geq \frac34r$.
Also, by~\eqref{LE:X6-FREE}, we know that~$
[Q_{\eta,\mu}^\star]_{ C^{0,\alpha}(\R)}\le\kappa$,
for $\alpha\in(0,2s)$.
Thus, by Lemma~\ref{CLEAN:LEMMA}, if 
\begin{equation*}\omega\geq
\frac{\kappa_1\kappa^\frac1\alpha}{{\mu^2}\,\rho^{2+\frac1\alpha}}
|\log{\rho}|+1,\end{equation*}
we conclude that
\begin{equation}\label{cleanpointx_*}\begin{split}&
\text{there exist a clean point }
x_*\in(m_1+1,m_1+\omega)
\text{ and }\zeta\in\mathcal{Z}\\&\text{such that }
Q_{\eta,\mu}^\star(x_*)\in \overline{B_{\rho}(\zeta)}.\end{split}\end{equation}
Furthermore, by~\eqref{67:Asdf3456}, we have that~$\zeta\in\{\zeta_1,\zeta_2\}$. Now, arguing as
in the proof of Theorem~9.4 in~\cite{MR3594365} (see in particular
the comments between~(9.12) and~(9.13) in~\cite{MR3594365}),
and using \eqref{contradicexthm},
we see that we must actually have that 
\begin{equation}\label{cleanpointx_*bis}\zeta=\zeta_2\end{equation}
and that $Q_{\eta,\mu}^\star(x)\in  \overline{B_{\frac{r}{2}}}(\zeta_2)$ for any $x\geq x_*$. In  particular, since by~\eqref{m1m2theta},  $x_*\leq m_1+\omega\leq m_2-\theta$,  we have that 
\begin{equation}\label{qstarafterx*} Q_{\eta,\mu}^\star(x)\in  \overline{B_{\frac{r}{2}}}(\zeta_2)\text{ for any }x\geq m_2-\theta.\end{equation}
Now we define $P(x):= Q_{\eta,\mu}^\star(x-\theta)$. 
We remark that if~$x\ge1+\theta$ then~$Q_{\zeta_1,\zeta_2}^\sharp(x)=\zeta_2=
Q_{\zeta_1,\zeta_2}^\sharp(x-\theta)$. Similarly, if~$x\le-1$,
then~$Q_{\zeta_1,\zeta_2}^\sharp(x)=\zeta_1=
Q_{\zeta_1,\zeta_2}^\sharp(x-\theta)$. As a result, we see that
the function~$x\mapsto Q_{\zeta_1,\zeta_2}^\sharp(x-\theta)-Q_{\zeta_1,\zeta_2}^\sharp(x)$
vanishes outside~$[-1,1+\theta]$ and therefore
\begin{equation}\label{ESSAnuro9234}
\begin{split}&
\int_\R \Big( |P(x)-Q_{\zeta_1,\zeta_2}^\sharp(x)|^2-|
Q_{\eta,\mu}^\star(x)-
Q_{\zeta_1,\zeta_2}^\sharp(x)|^2\Big)\,dx\\=\;&
\int_\R \Big( |Q_{\eta,{\mu}}^\star(x-\theta)-Q_{\zeta_1,\zeta_2}^\sharp(x)|^2-|
Q_{\eta,\mu}^\star(x)-
Q_{\zeta_1,\zeta_2}^\sharp(x)|^2\Big)\,dx
\\=\;&
\int_\R \Big( |v_{\eta,{\mu}}^\star(x-\theta)+
Q_{\zeta_1,\zeta_2}^\sharp(x-\theta)
-Q_{\zeta_1,\zeta_2}^\sharp(x)|^2-|
v_{\eta,\mu}^\star(x)|^2\Big)\,dx\\=\;&
\int_\R \Big( |v_{\eta,{\mu}}^\star(x-\theta)|^2-|
v_{\eta,\mu}^\star(x)|^2+2v_{\eta,{\mu}}^\star(x-\theta)
\big(Q_{\zeta_1,\zeta_2}^\sharp(x-\theta)
-Q_{\zeta_1,\zeta_2}^\sharp(x)\big)\\&\qquad\qquad+
|Q_{\zeta_1,\zeta_2}^\sharp(x-\theta)
-Q_{\zeta_1,\zeta_2}^\sharp(x)|^2\Big)\,dx\\=\;&
\int_\R \Big( 2v_{\eta,{\mu}}^\star(x-\theta)
\big(Q_{\zeta_1,\zeta_2}^\sharp(x-\theta)
-Q_{\zeta_1,\zeta_2}^\sharp(x)\big)+
|Q_{\zeta_1,\zeta_2}^\sharp(x-\theta)
-Q_{\zeta_1,\zeta_2}^\sharp(x)|^2\Big)\,dx\\ \leq\;&\const
\int_\R 
|Q_{\zeta_1,\zeta_2}^\sharp(x-\theta)
-Q_{\zeta_1,\zeta_2}^\sharp(x)|\,dx\\=\;&\const
\int_{-1}^{1+\theta}
|Q_{\zeta_1,\zeta_2}^\sharp(x-\theta)
-Q_{\zeta_1,\zeta_2}^\sharp(x)|\,dx\\ \le\;&\const.
\end{split}
\end{equation}
Also, due to \eqref{qstarafterx*}, we have that $P\in \Gamma(b_1,b_2)$ and therefore,
by~\eqref{ESSAnuro9234}
and the minimality of~$Q_{\eta,\mu}^\star$,
\begin{equation}\label{9ytguijh}
\begin{split}
 0& \le I_{\eta,\mu}(P)-I_{\eta,\mu}(Q_{\eta,\mu}^\star)\\
 &=
\int_\R a(x)\, W(P(x))\,dx -\int_\R a(x)\, W(Q_{\eta,\mu}^\star(x))\,dx\\
&\qquad +\frac{\mu}2
\int_\R \Big( |P(x)-Q_{\zeta_1,\zeta_2}^\sharp(x)|^2-|
Q_{\eta,\mu}^\star(x)-
Q_{\zeta_1,\zeta_2}^\sharp(x)|^2\Big)\,dx
\\ &\le
\int_\R a(x)\, W(Q_{\eta,\mu}^\star(x-\theta
))\,dx -\int_\R a(x)\, W(Q_{\eta,\mu}^\star(x))\,dx+
{\const\mu}
\\ &=
\int_\R \big[ a(x+\theta)-a(x) \big]\, W(Q_{\eta,\mu}^\star(x))\,dx+{\const\mu}.
\end{split}\end{equation}
Now, we observe that $Q_{\eta,\mu}^\star(m_1)\in
\overline{B_{\frac54 r}(\zeta_1)}$ and~$
Q_{\eta,\mu}^\star(x_*)\in \overline{B_\rho(\zeta_2)}$, due to \eqref{cleanpointx_*} and \eqref{cleanpointx_*bis}.
Therefore, since ~$Q_{\eta,\mu}^*$ is continuous, there exists~$y_*\in (m_1,m_1+\omega)$  such that either~$Q_{\eta,\mu}^*(y_*)=\zeta_{1}+\frac12$ 
or~$Q_{\eta,\mu}^*(y_*)=\zeta_{1}-\frac12$. We
assume without loss of generality that~$Q_{\eta,\mu}^*(y_*)
=\zeta_{1}+\frac12$.
Then by the H\"{o}lder continuity of~$Q_{\eta,\mu}^\star$, there exists 
an interval~$J_*\subset(m_1,m_1+\omega)$
of uniform length and centered at~$y_*$ such that~$
Q_{\eta,\mu}^\star(x)$
stays at distance~$1/4$ from~$\mathcal{Z}$ for any~$x\in J_*$. 
Therefore, using~\eqref{a nondegenerate}, we get
\begin{equation}\label{pricipalintextthm}\begin{split}
&\int_{m_1-\omega}^{m_1+\omega} \big[ a(x+\theta)-a(x) \big]\, W(Q_{\eta,\mu}^\star(x))\,dx\leq \int_{J_*} \big[ a(x+\theta)-a(x) \big]\, W(Q_{\eta,\mu}^\star(x))\,dx\\&
\qquad\leq -\gamma \int_{J_*} W(Q_{\eta,\mu}^\star(x))\,dx\leq -\tilde{\gamma}\inf_{\text{dist}(\tau,\mathcal{Z})\ge1/4}=:-\hat{\gamma}.
\end{split}\end{equation} 
Then, by~\eqref{LE:X5-FREE} and the continuity of~$Q_{\eta,\mu}^\star$,
we know that there exists a sequence of
points~$y_k\geq b_2=m_2$ with~$
y_k\to+\infty$ as~$k\to +\infty$, such that~$
y_k$ is a clean point for~$ Q_{\eta,\mu}^\star$ and~$
Q_{\eta,\mu}^\star(y_k)\in \overline{B_\rho(\zeta_2)}$. Then,
recalling~\eqref{cleanpointx_*} and~\eqref{cleanpointx_*bis},
by~\eqref{STIMA:AA1} and~\eqref{POT:GROW2}, we have that 
\begin{equation*}\int_{x_*}^{y_k} \big[ a(x+\theta)-a(x) \big]\,W(Q_{\eta,\mu}^\star(x))\,dx\leq\diamondsuit.
\end{equation*}
On that account, sending $k\to +\infty$, we obtain that 
\begin{equation}\label{smalnessexthmsfterm2}\int_{m_1+\omega}^{+\infty} \big[ a(x+\theta)-a(x) \big]\,W(Q_{\eta,\mu}^\star(x))\,dx\leq\diamondsuit.
\end{equation}
On the other hand, by arguing as in~\cite{MR3594365}, we have that 
\begin{equation}\label{smalnessexthmsbeforem1}\int_{-\infty}^{m_1-\omega}  \big[ a(x+\theta)-a(x) \big]\,W(Q_{\eta,\mu}^\star(x))\,dx\leq \diamondsuit.
\end{equation}
By plugging  \eqref{pricipalintextthm},  \eqref{smalnessexthmsfterm2}
and~\eqref{smalnessexthmsbeforem1} into~\eqref{9ytguijh}, we conclude that
$$0\leq-\hat{\gamma}+ \diamondsuit+{\const\mu}.$$
Hence there exists a structural constant~$\mu_0>0$
such that, if~$\mu\in[0,\mu_0]$, then
$$0\leq-\frac{\hat{\gamma}}2+ \diamondsuit.$$
The latter inequality is negative for~$\rho$ sufficiently small, and so
we have obtained the desired contradiction.  This proves~\eqref{LE:X1-FREE}. 
\end{proof}

\section{Vanishing viscosity method: sending~$\eta\searrow0$}
\label{S023fhYU0193933}

Now we consider the free minimizer constructed in Proposition~\ref{FREE-eta}
and we send~$\eta\searrow0$. The uniform estimates in~\eqref{LE:X3-FREE},
\eqref{LE:X4-FREE}, \eqref{LE:X5-FREE} and~\eqref{LE:X6-FREE}
will allow us to pass to the limit and obtain
a free minimizer, hence a solution, of a $\mu$-penalized
nonlocal problem (then, in Section~\ref{UPAlao},
we will take the limit as~$\mu\searrow0$ of such a penalization).

This perturbative technique may be thought as a nonlocal counterpart of the so-called
vanishing viscosity method for Hamilton-Jacobi equations, in which a small viscosity
term is added as a perturbation to obtain solutions of the original
equation.

To this aim, we consider~$I_\mu$ to be the energy
functional corresponding to the choice~$\eta:=0$ in~\eqref{01edkIETA},
namely
\begin{equation} \label{01edkIETAmu}\begin{split}
I_\mu(Q)\,&:=
\frac\mu2\int_\R
\big|Q(x)-Q_{\zeta_1,\zeta_2}^\sharp(x)\big|^2\,dx
+\int_\R a(x)W\big(Q(x)\big)\,dx
\\ &\qquad+\frac14
\iint_{\R\times \R} \Big( \big| Q(x)-Q(y)\big|^2
-\big| Q_{\zeta_1,\zeta_2}^\sharp(x)-Q_{\zeta_1,\zeta_2}^\sharp(y)\big|^2\Big)
\,K(x-y)\,dx\,dy.\end{split}
\end{equation}
Then, for any~$\eta>0$, we take~$Q_{\eta,\mu}^\star$
to be the free minimizer given by Proposition~\ref{FREE-eta}.
We consider an infinitesimal sequence~$\eta_j\searrow0$ and let~$Q_j^\star:=Q_{\eta_j,{{\mu}}}^\star$
and~$v_j^\star:=
Q_j^\star-Q_{\zeta_1,\zeta_2}^\sharp$.

Since the estimates in~\eqref{LE:X3-FREE},
\eqref{LE:X4-FREE},~\eqref{LE:X5-FREE} and~\eqref{LE:X6-FREE}  are uniform in~$\eta_j$,
up to a subsequence, for a fixed~$\mu>0$
(suitably small to fulfill the assumption of Proposition~\ref{FREE-eta}),
we can assume that~$v_j^\star$ converges to
some~$v^\star_{{\mu}}$ locally uniformly  in~$\R$ and weakly
in the Hilbert space induced by~$[\cdot]_{K,\R\times \R}$.
Then, we set~$Q^\star_{\mu}:=v^\star_{\mu}+Q_{\zeta_1,\zeta_2}^\sharp$.

By taking limits in~\eqref{LE:X1-FREE}, we obtain that
\begin{equation*}
I_{\mu}(Q_{\mu}^\star)\le I_{\mu}(Q) {\mbox{ for all }} Q {\mbox{ s.t. }}
Q-Q_{\zeta_1,\zeta_2}^\sharp\in H^1(\R)
.\end{equation*}
As a consequence, {differentiating the energy functional in~\eqref{01edkIETAmu} we
find that
\begin{equation}\label{9iskd3yydgdgd}
{\mu (Q-Q^\sharp_{\zeta_1,\zeta_2})}+{\mathcal{L}}(Q)(x)+a(x)  W'(Q(x))=0,
\quad {\mbox{ for any }} x\in\R
\end{equation}
in} the distributional sense,
and thus also in the
viscosity sense, due to~\cite{MR3161511}.

Also, by~\eqref{6cashjdcocn}, we conclude that
\begin{equation}\label{6cashjdcocn-2}
Q_{\mu}^\star:\R\to
\big[\min\{\zeta_1,\zeta_2\},\max\{\zeta_1,\zeta_2\}\big].\end{equation}
Moreover,
in view of~\eqref{PASerf6465-1},
\begin{equation}\label{PASerf6465-2}
Q_{\mu}^\star\in\Gamma(b_1,b_2).\end{equation} 
Finally,
from~\eqref{LE:X6-FREE} we obtain
\begin{equation}
\label{LE:X6-FREEmu} 
\|v_{\mu}^\star\|_{C^{0,\alpha}(\R)}\le\kappa \text{ for all }\alpha\in (0,2s).
\end{equation}

\section{Asymptotics of solutions}\label{ASynsdagg}

Here we discuss the spatial
asymptotics of the solutions of~\eqref{EQUAZIONE}.
In particular, we show that a solution which, at infinity, stays ``close''
to an equilibrium must converge to it.

\begin{lemma}\label{LE121}
Let~$\delta_0>0$ be as in~\eqref{AGGchew}.
Let~$b\in\R$, $\zeta\in{\mathcal{Z}}$ and~$Q$ be a bounded
solution of~\eqref{EQUAZIONE} in~$\R$ such
that
\begin{equation}\label{Dfor2aX53-2}
\zeta-\delta_0\le Q(x)\le \zeta\qquad{\mbox{ for all }}x\ge b.
\end{equation}
Then
$$ \lim_{x\to+\infty}Q(x)=\zeta.$$
\end{lemma}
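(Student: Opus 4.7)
I would argue by contradiction via a sliding (translation) argument together with the nonlocal strong minimum principle. Set $\ell := \liminf_{x\to+\infty} Q(x)$. By~\eqref{Dfor2aX53-2}, $\ell\in[\zeta-\delta_0,\zeta]$ and $\limsup_{x\to+\infty} Q(x)\le \zeta$, so it suffices to prove $\ell = \zeta$. Suppose toward contradiction that $\ell<\zeta$ and pick $x_n\to+\infty$ with $Q(x_n)\to\ell$; set $Q_n(x):=Q(x+x_n)$ and $a_n(x):=a(x+x_n)$.

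First, I would apply the interior regularity of Lemma~\ref{IL NUOVO} (with $\eta=\mu=0$ and $\rho=\delta_0$) on windows of fixed size centred at each $x_n$, so that $\{Q_n\}$ is bounded in $C^{0,\alpha}_{\mathrm{loc}}(\R)$ for some $\alpha\in(0,2s)$. By Ascoli--Arzel\`a together with weak-$*$ compactness of $\{a_n\}$ in $L^\infty_{\mathrm{loc}}$, a subsequence satisfies $Q_n\to\widetilde Q$ locally uniformly and $a_n\rightharpoonup^*\widetilde a$, with $\underline a\le\widetilde a\le\overline a$ and $\zeta-\delta_0\le\widetilde Q\le\zeta$ on all of $\R$ (for every fixed $x$ one has $x+x_n\ge b$ eventually). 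Moreover $\widetilde Q(0)=\ell$ and $\widetilde Q(x)=\lim_n Q(x+x_n)\ge \liminf_{y\to+\infty}Q(y)=\ell$ for every $x\in\R$, so the origin is a global minimum of $\widetilde Q$. Passing to the limit in the distributional form of~\eqref{EQUAZIONE} (the tail of the nonlocal integral being controlled by the decay~\eqref{KERNEL} and by $\|Q_n\|_{L^\infty}\le\|Q\|_{L^\infty}$) yields
\[
\mathcal{L}\widetilde Q+\widetilde a\,W'(\widetilde Q)=0\quad\text{in }\mathcal{D}'(\R).
\]

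The contradiction then follows from the nonlocal strong minimum principle. Testing this limit equation against nonnegative bumps $\varphi_\varepsilon\in C_c^\infty((-\varepsilon,\varepsilon))$ with $\int\varphi_\varepsilon=1$, Fubini and the evenness of $K$ turn the nonlocal contribution into $\int_\R(\ell-\widetilde Q(z))K(z)\,dz+o_\varepsilon(1)$, which is $\le o_\varepsilon(1)$ because $\widetilde Q\ge\ell$; meanwhile the forcing contribution converges to a quantity of absolute value at least $\underline a\,|W'(\ell)|$ and whose sign is that of $-W'(\ell)>0$ whenever $\ell\in(\zeta-\delta_0,\zeta)$, by~\eqref{AGGchew}. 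Since the two sides must be equal, this is impossible. The borderline possibility $\ell=\zeta-\delta_0$ is ruled out by first replacing $\delta_0$ with a slightly smaller value so that the strict inequality in~\eqref{AGGchew} holds on a closed neighbourhood of $\ell$.

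The step I expect to be the main technical obstacle is the clean passage to the distributional limit equation together with the unambiguous sign of the forcing term: since $a$ is only assumed bounded, $\widetilde a$ is merely a weak-$*$ limit and has no pointwise value at $0$, but one avoids appealing to $\widetilde a(0)$ by working with the averaged quantity $\int\varphi_\varepsilon\,\widetilde a\,dx\ge\underline a>0$, which is all that is needed to obtain the strict sign in the forcing term. All other ingredients---the tail estimate for $\mathcal{L}$, the positivity of $K$ on $[0,r_0]$ from~\eqref{KERNEL}, and the strict monotonicity~\eqref{AGGchew}---are directly available from the standing hypotheses.
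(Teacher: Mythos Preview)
Your overall strategy coincides with the paper's: translate along a sequence realizing $\ell=\liminf_{x\to+\infty}Q(x)$, extract a locally uniformly convergent subsequence, show the limit $\widetilde Q$ attains a global minimum $\ell$ at the origin, and then derive $W'(\ell)\ge0$ from a nonlocal minimum principle, contradicting~\eqref{AGGchew}. You are also right to flag the translated coefficient $a(\cdot+x_n)$ as a point the paper glosses over; the paper simply cites stability of viscosity solutions and writes $a(0)$, whereas you pass to a weak-$*$ limit~$\widetilde a$. Either fix works since only the lower bound $\underline a>0$ is used.

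The gap is in the execution of the minimum principle. Your claim that testing against $\varphi_\varepsilon$ ``turns the nonlocal contribution into $\int_\R(\ell-\widetilde Q(z))K(z)\,dz+o_\varepsilon(1)$'' is not justified at the available regularity. Since $a$ is only $L^\infty$, so is $\widetilde a$, hence $\mathcal L\widetilde Q=-\widetilde a\,W'(\widetilde Q)$ gives $\mathcal L\widetilde Q\in L^\infty$ but no better, and you cannot bootstrap $\widetilde Q$ beyond $C^{0,\alpha}$ with $\alpha<2s$; in particular $\mathcal L\widetilde Q(0)$ is not defined pointwise, and $\langle\mathcal L\widetilde Q,\varphi_\varepsilon\rangle$ need not approximate the formal value $\int(\ell-\widetilde Q)K$. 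Note also that the equation itself forces $\mathcal L\widetilde Q=-\widetilde a\,W'(\widetilde Q)\ge0$ a.e.\ on the set where $\widetilde Q\in[\zeta-\delta_0,\zeta]$, so the averaged quantity $\langle\mathcal L\widetilde Q,\varphi_\varepsilon\rangle$ is in fact nonnegative, not $\le o_\varepsilon(1)$; the contradiction must come from a genuinely pointwise (viscosity) statement, not from an averaged one. The paper handles this cleanly: it observes that $\widetilde Q$ is a viscosity solution, takes the hybrid test function $w$ equal to $\ell$ on $(-1,1)$ and to $\widetilde Q$ outside, which touches $\widetilde Q$ from below at $0$, and then computes $\mathcal L w(0)=\int_{|y|\ge1}(\ell-\widetilde Q(y))K(y)\,dy\le0$ directly. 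The viscosity supersolution inequality then gives $W'(\ell)\ge0$.

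Finally, your fix for the borderline $\ell=\zeta-\delta_0$ does not work as stated: you cannot shrink $\delta_0$ in the hypothesis~\eqref{Dfor2aX53-2}, since that is the given data. The paper's proof has the same endpoint ambiguity; in the application (Section~\ref{UPAlao}) this is harmless because one actually has $Q^\star\ge\zeta_2-r$ with $r<\delta_0$, so $\ell>\zeta-\delta_0$ automatically.
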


\begin{proof} Let
$$ \lambda:=\liminf_{x\to+\infty}Q(x).$$
In view of~\eqref{Dfor2aX53-2}, we have that~$\lambda\in[\zeta-\delta_0,\zeta]$,
and that the desired result is proven once we show that
\begin{equation*}
\lambda=\zeta.
\end{equation*}
To prove this, suppose by contradiction that
\begin{equation}\label{Pijnsbdf3844}
\lambda\in[\zeta-\delta_0,\zeta).
\end{equation}
We take a diverging sequence~$x_k$ such that
$$ \lim_{k\to+\infty} Q(x_k)=\lambda,$$
and we define~$Q_k(x):=Q(x+x_k)$.

Since~$\|Q\|_{C^{0,\alpha}(\R)}<+\infty$ for some~$\alpha\in(0,1)$, due to Theorem~12.1
in~\cite{MR2494809}, we have that~$Q_k$
is uniformly equicontinuous and therefore there exists a subsequence~$Q_{k_j}$
that converges locally uniformly to some function~$Q_\infty$.
Then, by Corollary~4.7 in~\cite{MR2494809}, we know that~$Q_\infty$
is a viscosity solution of~\eqref{EQUAZIONE} in~$\R$.

Notice also that, for all~$x\in\R$,
$$ Q_\infty(x)=\lim_{j\to+\infty}Q_{k_j}(x)=\lim_{j\to+\infty}Q(x+x_{k_j})\ge
\liminf_{y\to+\infty}Q(y)=\lambda$$
and
$$ Q_\infty(0)=\lim_{j\to+\infty}Q_{k_j}(0)=\lim_{j\to+\infty}Q(x_{k_j})=
\lambda,$$
from which
\begin{equation}\label{tgvnlwbel}
Q_\infty(x)\ge\lambda=Q_\infty(0)\qquad{\mbox{ for all }}x\in\R.
\end{equation}
Let
$$ w(x):=\begin{cases}
\lambda & {\mbox{ in }} (-1,1),\\
Q_\infty(x)& {\mbox{ in }} \R\setminus(-1,1).
\end{cases}$$
We stress that~$w$ touches~$Q_\infty$ from below at the origin,
in view of~\eqref{tgvnlwbel}, and consequently
\begin{equation}\label{5tgsdd723747566}
{\mathcal{L}}w(0)+a(0)\,W'(\lambda)\ge0.
\end{equation}
On the other hand, by~\eqref{LOPERATO} and~\eqref{tgvnlwbel},
$$ {\mathcal{L}} w(0)= {\rm P.V.}
\int_{\R} \big( \lambda-w(y)\big)\,K(x-y)\,dy=
\int_{\R\setminus(-1,1)} \big( \lambda-Q_\infty(y)\big)\,K(x-y)\,dy\le0.$$
{F}rom this and \eqref{5tgsdd723747566}, we conclude that~$W'(\lambda)\ge0$.
This is in contradiction with~\eqref{AGGchew}, in light of~\eqref{Pijnsbdf3844},
and hence the proof is complete.
\end{proof}

\section{Sending~$\mu\searrow0$ and proof of Theorem~\ref{MAIN}}\label{UPAlao}

We are now ready to complete the proof of Theorem~\ref{MAIN},
by taking the limit as~$\mu\searrow0$ of the penalized solution
constructed in Section~\ref{S023fhYU0193933}.
To this end, we assume, for concreteness, that~$\zeta_2>\zeta_1$.
In light of~\eqref{LE:X6-FREEmu},
up to a subsequence, we can suppose that~$v^\star_\mu$
converges locally uniformly to a function~$v^\star$.
We set~$Q^\star:=v^\star+Q_{\zeta_1,\zeta_2}^\sharp$.
Notice that, by~\eqref{6cashjdcocn-2},
\begin{equation}\label{6cashjdcocn-3}
Q^\star:\R\to
\big[ \zeta_1,\zeta_2\big].\end{equation}
Furthermore,
by Corollary~4.7 in~\cite{MR2494809}, we can also pass
equation~\eqref{9iskd3yydgdgd} to the limit,
thus obtaining that~$Q^\star$ is a solution of~\eqref{EQUAZIONE}.

We also remark that, by~\eqref{PASerf6465-2},
we obtain that~$
Q^\star\in\Gamma(b_1,b_2)$. In particular, 
recalling~\eqref{psi}, \eqref{Psidef} and~\eqref{Gamma},
we have that, for all~$x\ge b_2+1$,
$$Q^\star(x)\ge\Psi(x)=\psi(x)=\zeta_2-r.
$$
Thanks to
this and~\eqref{6cashjdcocn-3}, we are in the position of using Lemma~\ref{LE121}
and thereby conclude that
\begin{equation}\label{SFigbf238457tqe}
\lim_{x\to+\infty}Q^\star(x)=\zeta_2.
\end{equation}
Similarly, one proves that
$$ \lim_{x\to-\infty}Q^\star(x)=\zeta_1.$$
{F}rom this and~\eqref{SFigbf238457tqe},
one obtains~\eqref{0LIMI}, as 
desired.
The proof of Theorem~\ref{MAIN} is thus completed.

\begin{appendix}

\section{Discontinuity and oscillatory behavior at infinity for functions
in Sobolev spaces with low fractional exponents}\label{APPB}

We recall here that functions belonging to the fractional
Sobolev space~$H^s(\R)$ with~$s\in\left(0,\frac12\right)$ are not necessarily continuous,
and they do not need to converge to zero at infinity.

To construct a simple example, let~$\varphi\in C^\infty_0\big(
\R,\,[0,1]\big)$ with~$\varphi(0)=1$. Given a sequence~$b_k$,
let
\begin{equation}\label{01230123401234511} \varphi_{b_k}(x):=\varphi\left( e^k(x-b_k)\right).\end{equation}
Then
\begin{eqnarray*}&&
\|\varphi_{b_k}\|_{L^2(\R)}=\sqrt{
\int_\R \left|\varphi\left( e^k(x-b_k)\right)\right|^2\,dx }
=e^{-\frac{k}2}
\sqrt{\int_\R \left|\varphi\left( X\right)\right|^2\,dX}
=\const e^{-\frac{k}2}\\
{\mbox{and }}&&
[ \varphi_{b_k}]_{H^s(\R)}=\sqrt{\iint_{\R\times\R}
\frac{\left| 
\varphi\left( e^k(x-b_k)\right)-\varphi\left( e^k(y-b_k)\right)
\right|^2}{|x-y|^{1+2s}}\,dx\,dy
}\\&&\qquad\qquad\qquad
=e^{-\frac{(1-2s)k}{2}}\sqrt{\iint_{\R\times\R}
\frac{\left| 
\varphi\left(X\right)-\varphi\left( Y\right)
\right|^2}{|X-Y|^{1+2s}}\,dX\,dY
}=\const e^{-\frac{(1-2s)k}{2}}.
\end{eqnarray*}
We now consider the superposition of the functions~$\varphi_{b_k}$
with the choices~$b_k:=k$ and~$b_k:=1/k$.
Namely, if we set
$$ \Phi(x):=\sum_{k=1}^{+\infty} \varphi_{1/k}(x)+
\sum_{k=1}^{+\infty} \varphi_{k}(x),$$
when~$s\in\left(0,\frac12\right)$ we have that
\begin{eqnarray*}\| \Phi\|_{H^s(\R)}&\le&
\sum_{k=1}^{+\infty} \|\varphi_{1/k}\|_{H^s(\R)}+
\sum_{k=1}^{+\infty} \|\varphi_{k}\|_{H^s(\R)}\\&
\le& \const\sum_{k=1}^{+\infty} \left(
e^{-\frac{k}{2}}+e^{-\frac{(1-2s)k}{2}}\right)\\&\le&\const.\end{eqnarray*}
Nevertheless~$\Phi$ is not continuous at the origin, and
$$ \limsup_{x\to+\infty}\Phi(x)>0=
\liminf_{x\to+\infty}\Phi(x).$$
\medskip

The case of~$H^{1/2}(\R)$ is slightly more delicate,
since simple examples based on scaling, such as the one
provided in~\eqref{01230123401234511}, do not work in this case
(and, in fact, functions in~$H^{1/2}(\R)$
have nicer properties in terms of topology than those
in~$H^{s}(\R)$ with~$s\in\left(0,\frac12\right)$,
see e.g.~\cite{MR1354598}). Nevertheless,
also functions in~$H^{1/2}(\R)$ are not necessarily continuous
and they do not necessarily converge to zero at infinity.
To construct an example of these behaviors, 
as depicted in Figure~\ref{SPIKES}, we 
consider the function
$$ \R^2\ni X\mapsto \psi(X):=\left\{
\begin{matrix}
\log (1-\log|X|) & {\mbox{ if }} X\in B_1\setminus\{0\},\\
0 & {\mbox{ otherwise. }}
\end{matrix}
\right. $$
We claim that
\begin{equation}\label{FUO}
\psi\in H^1(\R^2).
\end{equation}
To check this, we notice that
\begin{equation}\label{02w-1-34567}{\mbox{$\psi$ is supported in~$B_1$,}}\end{equation}
where we have that
$$ |\nabla\psi(X)|=
\frac1{|X|\,\big(1-\log|X|\big)}.$$
Therefore, using polar coordinates and the change of variable~$t:=-\log\rho$,
we find that\begin{eqnarray*}&&
[\psi]^2_{H^1(\R)}=\int_{B_1}
\frac1{|X|^2\,\big(1-\log|X|\big)^2}\,dX
=2\pi\int_{0}^1
\frac1{\rho\,\big(1-\log\rho\big)^2}\,d\rho\\&&\qquad\qquad\qquad=
2\pi\int_{0}^{+\infty}
\frac1{(1+t)^2}\,dt<+\infty
.\end{eqnarray*}
This, together with~\eqref{02w-1-34567}
and the Poincar\'e inequality, proves~\eqref{FUO}.

\begin{figure}
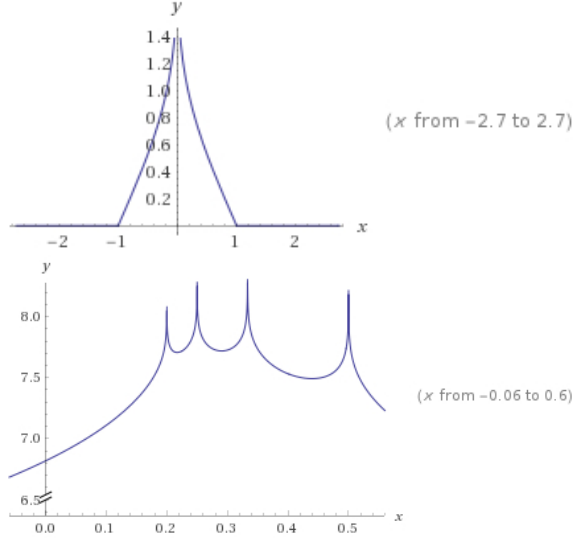

    \centering
    \includegraphics[width=7.5cm]{log.jpg}\qquad
    \includegraphics[width=7.5cm]{log2.jpg}
    \caption{\it {{The function~$\bar\psi$ and sketch of the construction
of the function~$\Psi$.}}}
    \label{SPIKES}
\end{figure}

Then, from~\eqref{FUO}
and the Trace Theorem (see e.g. formula~(3.19)
in~\cite{guida}), we obtain that
\begin{equation}\label{FUO2}
{\mbox{the function $\R\ni x\mapsto\bar\psi(x):=\psi(x,0)$ belongs to~$
H^{1/2}(\R)$.}}
\end{equation}
Now we define the sequence of functions,
for~$k\in\Z$ and~$X=(x,y)\in\R\times\R$,
$$ \psi_k(X)=\psi_k(x,y):=e^{-|k|}\bar\psi\big( e^{|k|}(x-e^k)\big).$$
Then, in view of~\eqref{FUO2} we have that
\begin{eqnarray*}
&& \|\psi_k\|_{L^2(\R)}=e^{-|k|}
\sqrt{ \int_\R \left|\bar\psi\big( e^{|k|}(x-e^k)\big)\right|^2\,dx}
=e^{-\frac{3|k|}2}
\sqrt{\int_{\R} \left|\bar\psi(\eta)\right|^2\,d\eta}\\&&\qquad=
e^{-\frac{3|k|}2}\,\|\bar\psi\|_{L^2(\R)}
=\const e^{-\frac{3|k|}2}\\
{\mbox{and }}&& 
[\psi_k]_{H^{1/2}(\R)}=e^{-|k|}\sqrt{\iint_{\R\times\R}
\frac{\left|\bar\psi\big( e^{|k|}(x-e^k)\big)-
\bar\psi\big( e^{|k|}(y-e^k)\big)\right|^2
}{|\bar x-\bar y|^2}\,d x\,d y}\\&&\qquad=
e^{-|k|} \sqrt{
\iint_{\R\times\R}
\frac{
| \bar\psi( \eta)-\bar\psi(\xi) |^2}{|\eta-\xi|^2} \,d\eta\,d\xi} 
=e^{-|k|}\,[\bar\psi]_{H^{1/2}(\R)}=\const e^{-|k|}
.\end{eqnarray*}
Consequently, if we set
$$ \R\mapsto\Psi(x):=\sum_{k\in\Z} \psi_k(x),$$
it follows that~$\Psi$ is not continuous (and not even locally bounded)
and it does not go to zero at infinity, but it belongs to~$H^{1/2}(\R)$
since
\begin{eqnarray*}&&
\|\Psi\|_{H^{1/2}(\R)}\le\sum_{k\in\Z} \|\psi_k\|_{H^{1/2}(\R)}
\le\const\sum_{k\in\Z}\left( \const e^{-\frac{3|k|}2}+\const e^{-|k|}\right)
\le \const.
\end{eqnarray*}

\end{appendix}

\section*{Acknowledgments}

This work has been
carried out during a very pleasant visit of the first and third authors
to the University of Texas.
Supported by the Australian Research Council Discovery
Project grant ``N.E.W. {\it Nonlocal
Equations at Work}''
and by the DECRA Project grant ``{\it Partial differential equations,
 free boundaries and applications}''. The authors are members of~G.N.A.M.P.A.--I.N.d.A.M.
We thank the anonymous Referees for their very useful remarks.

\begin{bibdiv}
\begin{biblist}

\bib{MR2219278}{article}{
   author={Alicandro, Roberto},
   author={Braides, Andrea},
   author={Cicalese, Marco},
   title={Phase and anti-phase boundaries in binary discrete systems: a
   variational viewpoint},
   journal={Netw. Heterog. Media},
   volume={1},
   date={2006},
   number={1},
   pages={85--107},
   issn={1556-1801},
   review={\MR{2219278}},
   doi={10.3934/nhm.2006.1.85},
}

\bib{MR1116854}{article}{
   author={Awatif, Sayah},
   title={\'{E}quations d'Hamilton-Jacobi du premier ordre avec termes
   int\'{e}gro-diff\'{e}rentiels. II. Existence de solutions de viscosit\'{e}},
   language={French},
   journal={Comm. Partial Differential Equations},
   volume={16},
   date={1991},
   number={6-7},
   pages={1075--1093},
   issn={0360-5302},
   review={\MR{1116854}},
   doi={10.1080/03605309108820790},
}

\bib{MR2735074}{article}{
   author={Barles, Guy},
   author={Chasseigne, Emmanuel},
   author={Imbert, Cyril},
   title={H\"{o}lder continuity of solutions of second-order non-linear elliptic
   integro-differential equations},
   journal={J. Eur. Math. Soc. (JEMS)},
   volume={13},
   date={2011},
   number={1},
   pages={1--26},
   issn={1435-9855},
   review={\MR{2735074}},
   doi={10.4171/JEMS/242},
}

\bib{MR2180302}{article}{
   author={Bass, Richard F.},
   author={Kassmann, Moritz},
   title={H\"older continuity of harmonic functions with respect to operators
   of variable order},
   journal={Comm. Partial Differential Equations},
   volume={30},
   date={2005},
   number={7-9},
   pages={1249--1259},
   issn={0360-5302},
   review={\MR{2180302}},
   doi={10.1080/03605300500257677},
}

\bib{MR1354598}{article}{
   author={Brezis, H.},
   author={Nirenberg, L.},
   title={Degree theory and BMO. I. Compact manifolds without boundaries},
   journal={Selecta Math. (N.S.)},
   volume={1},
   date={1995},
   number={2},
   pages={197--263},
   issn={1022-1824},
   review={\MR{1354598}},
   doi={10.1007/BF01671566},
}

\bib{MR3469920}{book}{
   author={Bucur, Claudia},
   author={Valdinoci, Enrico},
   title={Nonlocal diffusion and applications},
   series={Lecture Notes of the Unione Matematica Italiana},
   volume={20},
   publisher={Springer, [Cham]; Unione Matematica Italiana, Bologna},
   date={2016},
   pages={xii+155},
   isbn={978-3-319-28738-6},
   isbn={978-3-319-28739-3},
   review={\MR{3469920}},
   doi={10.1007/978-3-319-28739-3},
}

\bib{MR3280032}{article}{
   author={Cabr\'e, Xavier},
   author={Sire, Yannick},
   title={Nonlinear equations for fractional Laplacians II: Existence,
   uniqueness, and qualitative properties of solutions},
   journal={Trans. Amer. Math. Soc.},
   volume={367},
   date={2015},
   number={2},
   pages={911--941},
   issn={0002-9947},
   review={\MR{3280032}},
   doi={10.1090/S0002-9947-2014-05906-0},
}

\bib{MR2177165}{article}{
   author={Cabr\'e, Xavier},
   author={Sol\`a-Morales, Joan},
   title={Layer solutions in a half-space for boundary reactions},
   journal={Comm. Pure Appl. Math.},
   volume={58},
   date={2005},
   number={12},
   pages={1678--1732},
   issn={0010-3640},
   review={\MR{2177165}},
   doi={10.1002/cpa.20093},
}

\bib{MR3579567}{article}{
   author={Caffarelli, Luis},
   author={Dipierro, Serena},
   author={Valdinoci, Enrico},
   title={A logistic equation with nonlocal interactions},
   journal={Kinet. Relat. Models},
   volume={10},
   date={2017},
   number={1},
   pages={141--170},
   issn={1937-5093},
   review={\MR{3579567}},
   doi={10.3934/krm.2017006},
}

\bib{MR2494809}{article}{
   author={Caffarelli, Luis},
   author={Silvestre, Luis},
   title={Regularity theory for fully nonlinear integro-differential
   equations},
   journal={Comm. Pure Appl. Math.},
   volume={62},
   date={2009},
   number={5},
   pages={597--638},
   issn={0010-3640},
   review={\MR{2494809}},
   doi={10.1002/cpa.20274},
}

\bib{MR2564467}{article}{
   author={Caffarelli, Luis A.},
   author={Souganidis, Panagiotis E.},
   title={Convergence of nonlocal threshold dynamics approximations to front
   propagation},
   journal={Arch. Ration. Mech. Anal.},
   volume={195},
   date={2010},
   number={1},
   pages={1--23},
   issn={0003-9527},
   review={\MR{2564467}},
   doi={10.1007/s00205-008-0181-x},
}

\bib{MR3026598}{article}{
   author={Cantrell, Robert Stephen},
   author={Cosner, Chris},
   author={Lou, Yuan},
   author={Ryan, Daniel},
   title={Evolutionary stability of ideal free dispersal strategies: a
   nonlocal dispersal model},
   journal={Can. Appl. Math. Q.},
   volume={20},
   date={2012},
   number={1},
   pages={15--38},
   issn={1073-1849},
   review={\MR{3026598}},
}

\bib{MURA}{article}{
   author={Chen, K.-S.},
   author={Muratov, C. B.},
   author={Yan, X.},
   title={Layer solutions for a one-dimensional nonlocal model of
   Ginzburg-Landau type},
   journal={Math. Model. Nat. Phenom.},
   volume={12},
   date={2017},
   number={6},
   pages={68--90},
   issn={0973-5348},
   review={\MR{3746096}},
   doi={10.1051/mmnp/2017068},
}

\bib{MR2897881}{article}{
   author={Cosner, Chris},
   author={D\'{a}vila, Juan},
   author={Mart\'{i}nez, Salome},
   title={Evolutionary stability of ideal free nonlocal dispersal},
   journal={J. Biol. Dyn.},
   volume={6},
   date={2012},
   number={2},
   pages={395--405},
   issn={1751-3758},
   review={\MR{2897881}},
   doi={10.1080/17513758.2011.588341},
}

\bib{2019arXiv190701491C}{article}{
       author = {Cozzi, Matteo},
       author = {D{\'a}vila, Juan},
       author = {del Pino, Manuel},
        title = {Long-time asymptotics for evolutionary crystal dislocation models},
      journal = {arXiv e-prints},
         date = {2019},
archivePrefix = {arXiv},
       eprint = {1907.01491},
       }

\bib{MR3625078}{article}{
   author={Cozzi, Matteo},
   author={Dipierro, Serena},
   author={Valdinoci, Enrico},
   title={Nonlocal phase transitions in homogeneous and periodic media},
   journal={J. Fixed Point Theory Appl.},
   volume={19},
   date={2017},
   number={1},
   pages={387--405},
   issn={1661-7738},
   review={\MR{3625078}},
   doi={10.1007/s11784-016-0359-z},
}

\bib{MR3460227}{article}{
   author={Cozzi, Matteo},
   author={Passalacqua, Tommaso},
   title={One-dimensional solutions of non-local Allen-Cahn-type equations
   with rough kernels},
   journal={J. Differential Equations},
   volume={260},
   date={2016},
   number={8},
   pages={6638--6696},
   issn={0022-0396},
   review={\MR{3460227}},
   doi={10.1016/j.jde.2016.01.006},
}

\bib{MR1118699}{article}{
   author={Crandall, Michael G.},
   author={Ishii, Hitoshi},
   author={Lions, Pierre-Louis},
   title={User's guide to viscosity solutions of second order partial
   differential equations},
   journal={Bull. Amer. Math. Soc. (N.S.)},
   volume={27},
   date={1992},
   number={1},
   pages={1--67},
   issn={0273-0979},
   review={\MR{1118699}},
   doi={10.1090/S0273-0979-1992-00266-5},
}

\bib{guida}{article}{
   author={Di Nezza, Eleonora},
   author={Palatucci, Giampiero},
   author={Valdinoci, Enrico},
   title={Hitchhiker's guide to the fractional Sobolev spaces},
   journal={Bull. Sci. Math.},
   volume={136},
   date={2012},
   number={5},
   pages={521--573},
   issn={0007-4497},
   review={\MR{2944369}},
   doi={10.1016/j.bulsci.2011.12.004},
}

\bib{MR3296170}{article}{
   author={Dipierro, Serena},
   author={Palatucci, Giampiero},
   author={Valdinoci, Enrico},
   title={Dislocation dynamics in crystals: a macroscopic theory in a
   fractional Laplace setting},
   journal={Comm. Math. Phys.},
   volume={333},
   date={2015},
   number={2},
   pages={1061--1105},
   issn={0010-3616},
   review={\MR{3296170}},
   doi={10.1007/s00220-014-2118-6},
}

\bib{MR3594365}{article}{
   author={Dipierro, Serena},
   author={Patrizi, Stefania},
   author={Valdinoci, Enrico},
   title={Chaotic orbits for systems of nonlocal equations},
   journal={Comm. Math. Phys.},
   volume={349},
   date={2017},
   number={2},
   pages={583--626},
   issn={0010-3616},
   review={\MR{3594365}},
   doi={10.1007/s00220-016-2713-9},
}

\bib{MR1636644}{article}{
   author={Dockery, Jack},
   author={Hutson, Vivian},
   author={Mischaikow, Konstantin},
   author={Pernarowski, Mark},
   title={The evolution of slow dispersal rates: a reaction diffusion model},
   journal={J. Math. Biol.},
   volume={37},
   date={1998},
   number={1},
   pages={61--83},
   issn={0303-6812},
   review={\MR{1636644}},
   doi={10.1007/s002850050120},
}

\bib{MR2852206}{article}{
   author={Fino, A. Z.},
   author={Ibrahim, H.},
   author={Monneau, R.},
   title={The Peierls-Nabarro model as a limit of a Frenkel-Kontorova model},
   journal={J. Differential Equations},
   volume={252},
   date={2012},
   number={1},
   pages={258--293},
   issn={0022-0396},
   review={\MR{2852206}},
   doi={10.1016/j.jde.2011.08.007},
}

\bib{HUM}{article}{
author={Humphries, N.E.},
author={Queiroz, N.},
author={Dyer, J. R. M.},
author={Pade, N. G.},
author={Musyl, M. K.},
author={Schaefer, K. M.},
author={Fuller, D. W.},
author={Brunnschweiler, J. M.},
author={Doyle, T. K.},
author={Houghton, J. D. R.},
author={Hays, G. C.},
author={Jones, C. S.},
author={Noble, L. R.},
author={Wearmouth, V. J.},
author={Southall, E. J.},
author={Sims, D. W.},
title={Environmental context explains L\'evy and Brownian movement patterns
of marine predators},
journal={Nature},
volume={465},
pages={1066--1069},
year={2010},
}

\bib{il}{article}{
   author={Ishii, H.},
   author={Lions, P.-L.},
   title={Viscosity solutions of fully nonlinear second-order elliptic
   partial differential equations},
   journal={J. Differential Equations},
   volume={83},
   date={1990},
   number={1},
   pages={26--78},
   issn={0022-0396},
   review={\MR{1031377}},
   doi={10.1016/0022-0396(90)90068-Z},
}

\bib{MR2556498}{article}{
   author={Kao, Chiu-Yen},
   author={Lou, Yuan},
   author={Shen, Wenxian},
   title={Random dispersal vs. non-local dispersal},
   journal={Discrete Contin. Dyn. Syst.},
   volume={26},
   date={2010},
   number={2},
   pages={551--596},
   issn={1078-0947},
   review={\MR{2556498}},
   doi={10.3934/dcds.2010.26.551},
}

\bib{MR0271383}{article}{
   author={Lewy, Hans},
   author={Stampacchia, Guido},
   title={On the smoothness of superharmonics which solve a minimum problem},
   journal={J. Analyse Math.},
   volume={23},
   date={1970},
   pages={227--236},
   issn={0021-7670},
   review={\MR{0271383}},
}

\bib{MR3590678}{article}{
   author={Massaccesi, Annalisa},
   author={Valdinoci, Enrico},
   title={Is a nonlocal diffusion strategy convenient for biological
   populations in competition?},
   journal={J. Math. Biol.},
   volume={74},
   date={2017},
   number={1-2},
   pages={113--147},
   issn={0303-6812},
   review={\MR{3590678}},
   doi={10.1007/s00285-016-1019-z},
}

\bib{MR2946964}{article}{
   author={Monneau, R\'{e}gis},
   author={Patrizi, Stefania},
   title={Homogenization of the Peierls-Nabarro model for dislocation
   dynamics},
   journal={J. Differential Equations},
   volume={253},
   date={2012},
   number={7},
   pages={2064--2105},
   issn={0022-0396},
   review={\MR{2946964}},
   doi={10.1016/j.jde.2012.06.019},
}

\bib{NABA}{book}{
   author={Nabarro, F. R. N.},
   title={Dislocations in Solids. The Elastic Theory},
   volume={1},
   publisher={North-Holland Publishing Company, Oxford},
   date={1979},
   pages={350},
   isbn={0-7204-0756-7},
   doi={10.1002/crat.19800150213},
}

\bib{MR3081641}{article}{
   author={Palatucci, Giampiero},
   author={Savin, Ovidiu},
   author={Valdinoci, Enrico},
   title={Local and global minimizers for a variational energy involving a
   fractional norm},
   journal={Ann. Mat. Pura Appl. (4)},
   volume={192},
   date={2013},
   number={4},
   pages={673--718},
   issn={0373-3114},
   review={\MR{3081641}},
   doi={10.1007/s10231-011-0243-9},
}

\bib{MR1030854}{article}{
   author={Rabinowitz, Paul H.},
   title={Periodic and heteroclinic orbits for a periodic Hamiltonian
   system},
   language={English, with French summary},
   journal={Ann. Inst. H. Poincar\'e Anal. Non Lin\'eaire},
   volume={6},
   date={1989},
   number={5},
   pages={331--346},
   issn={0294-1449},
   review={\MR{1030854}},
}

\bib{MR1304144}{article}{
   author={Rabinowitz, Paul H.},
   title={Heteroclinics for a reversible Hamiltonian system},
   journal={Ergodic Theory Dynam. Systems},
   volume={14},
   date={1994},
   number={4},
   pages={817--829},
   issn={0143-3857},
   review={\MR{1304144}},
   doi={10.1017/S0143385700008178},
}

\bib{MR1799055}{article}{
   author={Rabinowitz, Paul H.},
   author={Coti Zelati, Vittorio},
   title={Multichain-type solutions for Hamiltonian systems},
   conference={
      title={Proceedings of the Conference on Nonlinear Differential
      Equations},
      address={Coral Gables, FL},
      date={1999},
   },
   book={
      series={Electron. J. Differ. Equ. Conf.},
      volume={5},
      publisher={Southwest Texas State Univ., San Marcos, TX},
   },
   date={2000},
   pages={223--235},
   review={\MR{1799055}},
}

\bib{MR1804957}{article}{
   author={Rabinowitz, Paul H.},
   title={Connecting orbits for a reversible Hamiltonian system},
   journal={Ergodic Theory Dynam. Systems},
   volume={20},
   date={2000},
   number={6},
   pages={1767--1784},
   issn={0143-3857},
   review={\MR{1804957}},
   doi={10.1017/S0143385700000985},
}

\bib{MR3272943}{article}{
   author={Reynolds, A. M.},
   title={L\'{e}vy flight movement patterns in marine predators may derive from
   turbulence cues},
   journal={Proc. R. Soc. Lond. Ser. A Math. Phys. Eng. Sci.},
   volume={470},
   date={2014},
   number={2171},
   pages={20140408, 8},
   issn={1364-5021},
   review={\MR{3272943}},
   doi={10.1098/rspa.2014.0408},
}

\bib{rosotonserra}{article}{
   author={Ros-Oton, Xavier},
   author={Serra, Joaquim},
   title={The Dirichlet problem for the fractional Laplacian: regularity up
   to the boundary},
   language={English, with English and French summaries},
   journal={J. Math. Pures Appl. (9)},
   volume={101},
   date={2014},
   number={3},
   pages={275--302},
   issn={0021-7824},
   review={\MR{3168912}},
   doi={10.1016/j.matpur.2013.06.003},
}

\bib{MR2948285}{article}{
   author={Savin, Ovidiu},
   author={Valdinoci, Enrico},
   title={$\Gamma$-convergence for nonlocal phase transitions},
   journal={Ann. Inst. H. Poincar\'e Anal. Non Lin\'eaire},
   volume={29},
   date={2012},
   number={4},
   pages={479--500},
   issn={0294-1449},
   review={\MR{2948285}},
   doi={10.1016/j.anihpc.2012.01.006},
}

\bib{MR3090147}{article}{
   author={Servadei, Raffaella},
   author={Valdinoci, Enrico},
   title={Lewy-Stampacchia type estimates for variational inequalities
   driven by (non)local operators},
   journal={Rev. Mat. Iberoam.},
   volume={29},
   date={2013},
   number={3},
   pages={1091--1126},
   issn={0213-2230},
   review={\MR{3090147}},
   doi={10.4171/RMI/750},
}
	
\bib{MR3161511}{article}{
  author={Servadei, Raffaella},
  author={Valdinoci, Enrico},
  title={Weak and viscosity solutions of the fractional Laplace equation},
  journal={Publ. Mat.},
  volume={58},
  date={2014},
  number={1},
  pages={133--154},
  issn={0214-1493},
  review={\MR{3161511}},
}

\bib{MR2707618}{book}{
   author={Silvestre, Luis Enrique},
   title={Regularity of the obstacle problem for a fractional power of the
   Laplace operator},
   note={Thesis (Ph.D.)--The University of Texas at Austin},
   publisher={ProQuest LLC, Ann Arbor, MI},
   date={2005},
   pages={95},
   isbn={978-0542-25310-2},
   review={\MR{2707618}},
}
%%	
%%	\bib{MR2244602}{article}{
%%	   author={Silvestre, Luis},
%%	   title={H\"older estimates for solutions of integro-differential equations
%%	   like the fractional Laplace},
%%	   journal={Indiana Univ. Math. J.},
%%	   volume={55},
%%	   date={2006},
%%	   number={3},
%%	   pages={1155--1174},
%%	   issn={0022-2518},
%%	   review={\MR{2244602}},
%%	   doi={10.1512/iumj.2006.55.2706},
%%	}

\bib{MR3590646}{article}{
   author={Sprekels, J\"{u}rgen},
   author={Valdinoci, Enrico},
   title={A new type of identification problems: optimizing the fractional
   order in a nonlocal evolution equation},
   journal={SIAM J. Control Optim.},
   volume={55},
   date={2017},
   number={1},
   pages={70--93},
   issn={0363-0129},
   review={\MR{3590646}},
   doi={10.1137/16M105575X},
}

\bib{MR0290095}{book}{
   author={Stein, Elias M.},
   title={Singular integrals and differentiability properties of functions},
   series={Princeton Mathematical Series, No. 30},
   publisher={Princeton University Press, Princeton, N.J.},
   date={1970},
   pages={xiv+290},
   review={\MR{0290095}},
}

\bib{MR1442163}{article}{
   author={Toland, J. F.},
   title={The Peierls-Nabarro and Benjamin-Ono equations},
   journal={J. Funct. Anal.},
   volume={145},
   date={1997},
   number={1},
   pages={136--150},
   issn={0022-1236},
   review={\MR{1442163}},
   doi={10.1006/jfan.1996.3016},
}

\bib{VIS}{article}{
   author={Viswanathan, G. M.},
   author={Afanasyev, V.},
   author={Buldyrev, S. V.},   
   author={Murphy, E. J.},
   author={Prince, P. A.},
   author={Stanley, H. E.},
   title={L\'evy flight search patterns of wandering albatrosses},
   journal={Nature},
   volume={381},
   number={1},
   pages={413--415},
   year={1996},
}

\end{biblist}
\end{bibdiv}
\vfill
\end{document}